\renewcommand{\epsilon}{{\varepsilon}}
\numberwithin{equation}{section}
\newtheorem{theorem}{Theorem}[section]
\newtheorem{lemma}[theorem]{Lemma}
\newtheorem{remark}[theorem]{Remark}
\newtheorem{definition}[theorem]{Definition}
\newtheorem{proposition}[theorem]{Proposition}
\newcommand{\R}{\mathbb R}
\newcommand{\N}{\mathbb N}
\newcommand{\T}{\mathbb T}
\def\({\left(}
\def\){\right)}
\def\<{\left\langle}
\def\>{\right\rangle}
\def\d{{\partial}}
\def\eps{\varepsilon}
\def\si{\sigma}
\DeclareMathOperator{\IM}{Im}
\def\Tend#1#2{\mathop{\longrightarrow}\limits_{#1\rightarrow#2}}
\begin{document}

\title[NLS under partial harmonic confinement]
{Global dynamics below the ground states for NLS under partial harmonic confinement}

\author[Alex H. Ardila]{Alex H. Ardila}
\address{Universidade Federal de Minas Gerais\\ ICEx-UFMG\\ CEP
  30123-970\\ MG, Brazil} 
\email{ardila@impa.br}
\author[R\'emi Carles]{R\'emi Carles}
\address{Univ Rennes, CNRS\\ IRMAR - UMR 6625\\ F-35000 Rennes\\ France}
\email{Remi.Carles@math.cnrs.fr}

\begin{abstract}
We are concerned with the global behavior of the
solutions of the focusing mass supercritical nonlinear Schr\"odinger
equation under partial harmonic confinement. We establish a necessary
and sufficient condition on the initial data below the ground states
to determine the global behavior (blow-up/scattering) of the
solution.  Our proof of scattering is based on the variational
characterization of the ground states,  localized virial estimates,
linear profile decomposition and nonlinear profiles. 
\end{abstract}

\thanks{RC is supported by Rennes M\'etropole through its AIS program. }

\subjclass[2010]{35Q55, 37K45, 35P25}
\keywords{NLS;  ground states; global existence; blow-up; scattering.}

\maketitle




\medskip

\section{Introduction}
\label{sec:intro}

In this paper we study the initial-value problem for the nonlinear Schr\"odinger
equation under partial harmonic confinement
\begin{equation}\label{GP}
 \begin{cases} 
i\partial_{t}u=H u+\lambda |u|^{2\sigma}u,\quad x \in\mathbb{R}^{d},\quad t\in\mathbb{R},\,\,\\
u(0,x)=u_{0}(x),
\end{cases} 
\end{equation}
where $u: \mathbb{R}\times\mathbb{R}^{d}\rightarrow \mathbb{C}$,
$\lambda\in \{-1,+1\}$, 
$d\geq2$  and $0<\sigma<\tfrac{2}{d-2}$.  The operator $H$ is
defined as
\[H:=-\Delta_{y}+|y|^{2}-\Delta_{z},  \quad
  x=(y,z)\in  \mathbb{R}^{n}\times  \mathbb{R}^{d-n},\]
where $1\leq n\leq d-1$. The equation \eqref{GP} arises 
in various branches of physics, such as the Bose-Einstein condensates or the propagation of mutually incoherent wave packets in nonlinear optics.  For more details we refer to \cite{JossePomea2001}. 

As recalled briefly in Section~\ref{S:es}, the Cauchy problem for
\eqref{GP} is locally well-posed in the energy space\footnote{The
  notation $B_1$ is borrowed from \cite{BenCasteMeht2008}, for
  consistency in future references.} 
\begin{equation*} 
{B}_{1}=\big\{u\in H^{1}(\mathbb{R}^{d}; \mathbb{C}): \|y u\|^{2}_{L^{2}}=\int_{\mathbb{R}^{d}}|y|^{2}|u(x)|^{2}dx<\infty  \big\},
\end{equation*}
equipped with the norm
\begin{equation*} 
\|u\|^{2}_{{B}_{1}}=\<u,Hu\>=\|\nabla_{x} u\|^{2}_{L^{2}}+\|yu\|^{2}_{L^{2}}+\|u\|^{2}_{L^{2}}.
\end{equation*}
In particular, the linear propagator $e^{-itH}$ preserves the
$B_1$-norm. 
We can use a contraction mapping technique based on Strichartz
estimates to show that \eqref{GP} is locally well-posed in ${B}_{1}$
(see Lemma~\ref{lem:LWP}): for any $u_{0}\in B_{1}$ there exists a
unique maximal solution $u\in C((-T_-,T_+); B_{1})$ of \eqref{GP}, $T_\pm\in(0, \infty]$.  Furthermore, the solution $u$  enjoys the conservation of energy, momentum and mass,
\begin{equation}\label{Mome}
E(u(t))=E(u_{0}), \quad G(u(t))=G(u_{0}), \quad M(u(t))=M(u_{0}),\quad
\forall t\in (-T_-,T_+),
\end{equation}
where $E$, $M$ and $G$ are defined as
\[ E(u)=\frac{1}{2}\int_{\mathbb{R}^{d}}|\nabla_{x} u|^{2}dx+\frac{1}{2}\int_{\mathbb{R}^{d}}|y|^{2}|u|^{2}dx+\frac{\lambda}{2\sigma+2}\int_{\mathbb{R}^{d}}|u|^{2\sigma+2}dx, \]
and
\begin{equation}\label{Moment}
G(u)=\mbox{Im}\int_{\mathbb{R}^{d}}\overline{u}\nabla_{z}udx,\quad
  M(u)=\int_{\mathbb{R}^{d}}|u|^{2}dx.
\end{equation}
We recall the definitions of scattering and blow-up in the framework
of the energy space $B_1$.
\begin{definition}
Let $u$ be a solution of the Cauchy problem \eqref{GP} on the maximal existence time interval $(-T_{-}, T_{+})$. We say that the solution $u$ scatters in $B_{1}$ (both forward and backward time) if $T_{\pm}=\infty$ and there exist $\psi^{\pm}\in B_{1}$ such that
\begin{equation*}
\|u(t)-e^{-itH}\psi^{\pm}\|_{B_{1}}=\|e^{itH}u(t)-\psi^{\pm}\|_{B_{1}} \rightarrow0 \quad \text{as $t\rightarrow \pm \infty$.}
\end{equation*}
On the other hand, if $T_{+}<\infty$ (resp. $T_{-}<\infty$), we say
that the solution $u$ blows up in positive time (resp. negative
time). In the case $T_+<\infty$, this corresponds to the property
\begin{equation*}
  \|\nabla_x u(t)\|_{L^2(\R^d)}\Tend t {T_+} \infty. 
\end{equation*}
\end{definition}
We refer to the proof of Lemma~\ref{lem:LWP} below to see why the
momentum does not appear in the blow-up characterization. In
\cite{AntoCarSilva2015}, scattering was considered in the conformal
space
\begin{equation*}
  \Sigma= B_1\cap \{f;\ x\mapsto |z| f(x)\in L^2(\R^d)\} =
  H^1(\R^d)\cap \{ f;\ x\mapsto |x| f(x)\in L^2(\R^d)\} ,
\end{equation*}
which is of course smaller than $B_1$. In the present paper, we
investigate the large time behavior of the solution to \eqref{GP} in
$B_1$, both in the focusing ($\lambda=-1$) and in the defocusing
($\lambda=1$) case. As a preliminary, we state a result concerning the
small data case. 
\begin{proposition}\label{prop:small-general}
 Suppose
  $\tfrac{2}{d-n}\le \si<\tfrac{2}{d-2}$ and $\lambda\in
  \{-1,+1\}$. There exists $\nu>0$ 
  such that if $\|u_0\|_{B_1}\le \nu$, then the solution to
  \eqref{GP} is global in time ($T_\pm =\infty$) and scatters in
  $B_1$. 
\end{proposition}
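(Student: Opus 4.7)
The plan is to run a standard contraction-mapping fixed-point argument using Strichartz estimates tailored to dispersion in the free direction $z\in\R^{d-n}$, and then deduce scattering from smallness of the Strichartz norms of the solution. The hypothesis $\sigma\ge 2/(d-n)$ is precisely the mass-critical threshold in the $(d-n)$-dimensional free variable, so this approach should be optimal.

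\textbf{Step 1 (Strichartz estimates).} Since $H$ splits as the harmonic oscillator in $y$ plus $-\Delta_z$, the propagator factorizes $e^{-itH}=e^{-it(-\Delta_y+|y|^2)}\otimes e^{it\Delta_z}$. The first factor is a unitary isomorphism of $L^2(\R^n_y)$ via Mehler's formula, while the second enjoys the full dispersive estimate $\|e^{it\Delta_z}\|_{L^1_z\to L^\infty_z}\lesssim |t|^{-(d-n)/2}$. A $TT^*$ argument then yields, for every pair $(q,r)$ Schr\"odinger-admissible in dimension $d-n$ (that is $\tfrac 2q+\tfrac{d-n}{r}=\tfrac{d-n}{2}$, with $q,r\ge 2$, and $(q,r,d-n)\neq(2,\infty,2)$), the global-in-time bound $\|e^{-itH}\phi\|_{L^q_tL^r_x}\lesssim \|\phi\|_{L^2_x}$ together with the inhomogeneous counterpart. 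I expect this result to be either already stated in Section \ref{S:es} or borrowed from \cite{AntoCarSilva2015}.

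\textbf{Step 2 (functional setting and contraction).} Working in the space
\[
X_T:=L^\infty((-T,T);B_1)\cap L^q((-T,T);L^r_x)\cap \{\nabla_z u\in L^q((-T,T);L^r_x)\},
\]
I would apply the Duhamel operator
\[
\Phi(u)(t)=e^{-itH}u_0-i\lambda\int_0^t e^{-i(t-s)H}\bigl(|u|^{2\sigma}u\bigr)(s)\,ds
\]
and show it is a contraction on a small ball of $X_\infty$, provided $\|u_0\|_{B_1}$ is small enough. Because $H$ commutes with itself and with $\nabla_z$, the linear flow preserves $\|u\|_{B_1}$, $\|\nabla_z u\|_{L^2}$, and gives finite Strichartz norms of $e^{-itH}u_0$ proportional to $\|u_0\|_{L^2}$. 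The nonlinear piece is controlled by a standard H\"older-Strichartz computation: choosing $(q,r)$ so that Sobolev embedding $W^{1,r}_x\hookrightarrow L^{(2\sigma+2)r/(r-2\sigma)}_x$ is valid, one obtains $\||u|^{2\sigma}u\|_{L^{q'}_tL^{r'}_x}\lesssim \|u\|_{L^q_tL^r_x}^{2\sigma}\|u\|_{L^\infty_t H^1_x}$, with an analogous estimate for $\nabla_z(|u|^{2\sigma}u)$ via the pointwise bound $|\nabla(|u|^{2\sigma}u)|\lesssim |u|^{2\sigma}|\nabla u|$. The exponent $\sigma\ge 2/(d-n)$ ensures that $q'\le q$ and the time integrability closes.

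\textbf{Step 3 (control of the weight $|y|$).} The main obstacle is that $H^{1/2}$ mixes $\nabla_x$ with the multiplier $|y|$, and the latter does not commute with $H$, so one cannot simply apply Strichartz estimates to $H^{1/2}u$. I would circumvent this by controlling $\|yu(t)\|_{L^2}$ \emph{separately}: commuting $y_j$ with the equation gives $i\partial_t(y_j u)=H(y_j u)-2\partial_{y_j}u+\lambda|u|^{2\sigma}(y_j u)$, so a standard $L^2$ energy estimate combined with the already-established Strichartz bound on $u$ and $\nabla_y u$ yields $\|yu\|_{L^\infty_t L^2_x}\lesssim \|y u_0\|_{L^2}+C\|u\|_{X_\infty}^{2\sigma+1}$. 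This closes the $B_1$-norm control without ever having to put a $|y|$-weight on the nonlinear term inside a Strichartz norm.

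\textbf{Step 4 (scattering).} Once $u\in X_\infty$ with small norms, the wave operators converge: setting $\psi^\pm:=u_0-i\lambda\int_0^{\pm\infty}e^{isH}(|u|^{2\sigma}u)(s)\,ds$, the remainder $\|e^{itH}u(t)-\psi^\pm\|_{B_1}$ is bounded by the Strichartz norms of $u$ on $(t,\pm\infty)$, which tend to $0$ by monotone convergence. Hence $u$ scatters in $B_1$, completing the proof.
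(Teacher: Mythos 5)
Your proposal contains two substantive gaps; each corresponds precisely to a device in the paper that you bypass.

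\textbf{Step 1 is false as stated.} The factorization $e^{-itH}=e^{-it(-\Delta_y+|y|^2)}\otimes e^{it\Delta_z}$ is correct, but the harmonic factor is unitary on $L^2_y$ only; it is \emph{not} dispersive on the torus of times (it is $\pi$-periodic up to phase, and its $L^1_y\to L^\infty_y$ norm blows up near $t\in\pi\Z$). Hence $e^{-itH}$ does \emph{not} satisfy global-in-time Strichartz bounds $\|e^{-itH}\phi\|_{L^q_tL^r_x}\lesssim\|\phi\|_{L^2_x}$ with $L^r_x=L^r(\R^d)$ and $(q,r)$ admissible in dimension $d-n$. What is true is either the mixed-norm version $\|e^{-itH}\phi\|_{L^q_tL^r_zL^2_y}\lesssim\|\phi\|_{L^2_x}$ (but then H\"older in $y$ cannot close on the power nonlinearity), or the $\ell^p_\gamma L^q(I_\gamma;L^r_x)$ bounds of Lemma~\ref{Ls1}, which combine \emph{local} $d$-dimensional dispersion on $I_\gamma=\pi[\gamma-1,\gamma+1)$ with a discrete $\ell^p_\gamma$ summability coming from the residual $(d-n)$-dimensional dispersion; here $2/q=d(1/2-1/r)$ and $2/p=(d-n)(1/2-1/r)$, not $(d-n)$-admissibility of $(q,r)$. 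The constraint $\si\ge 2/(d-n)$ then enters through the exponent $p$, not through a naive $(d-n)$-dimensional admissibility. Without this lattice structure your contraction in $X_\infty$ has nothing to run on.

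\textbf{Step 3 does not close.} The commutator identity you write (sign aside) gives
\[
\frac{d}{dt}\|y_ju\|_{L^2}^2 = \pm 4\,\Im\!\int \overline{y_ju}\,\partial_{y_j}u\,dx,
\]
since both the $H(y_ju)$ term and the nonlinear term $\lambda|u|^{2\si}y_ju$ drop out of $\Re\langle\cdot,y_ju\rangle$. The surviving term $-2\partial_{y_j}u$ is \emph{linear}, not small, and not time-integrable: integrating yields $\|yu(t)\|_{L^2}\lesssim\|yu_0\|_{L^2}+\int_0^t\|\nabla_yu(s)\|_{L^2}\,ds$, which grows linearly. This is the familiar phenomenon that under the harmonic flow $\|yu\|_{L^2}$ and $\|\nabla_yu\|_{L^2}$ exchange energy and only their combination is conserved; one cannot bound them individually by a Gr\"onwall argument. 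The paper avoids this by replacing $(y,-i\nabla_y)$ with the Heisenberg-rotated vector fields $A_1(t),A_2(t)$ of Lemma~\ref{lem:LWP}, which \emph{commute} with $i\partial_t-H$, act on gauge-invariant nonlinearities like derivatives, and satisfy the norm equivalence \eqref{eq:vector-norm}; Strichartz is then applied to $A_ju$ rather than to $y_ju$.

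Once these two ingredients are in place, the remaining structure of your argument (contraction in a small ball, then scattering by Cauchy-in-$t$ of the Duhamel tail) matches Lemmas~\ref{Ls2} and \ref{ssp}, which is exactly how the paper obtains Proposition~\ref{prop:small-general}. I would also point out a minor omission: you still need to check that the triplets defined in Section~\ref{sec:indices} are compatible with the inhomogeneous estimate of Lemma~\ref{Ls22} and satisfy the algebraic relations $p=(2\si+1)\tilde p'$, $q=(2\si+1)\tilde q'$, $r=(2\si+1)r'$; these identities are what make the H\"older step close, and they encode the $\si\ge 2/(d-n)$ hypothesis more sharply than the heuristic ``$q'\le q$'' you invoke.
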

This proposition follows directly from Lemma~\ref{ssp}
below. We note that in \cite{AntoCarSilva2015}, for the similar
statement in the smaller space $\Sigma$, the lower bound on $\si$ was
$\si>\tfrac{d}{d+2}\tfrac{2}{d-n}$ (see
\cite[Theorem~1.5]{AntoCarSilva2015}). In terms of the variable $y\in
\R^n$, confinement 
prevents complete dispersion. On the other hand, in the variable $z\in
\R^{d-n}$, we benefit from the usual dispersion for the Schr\"odinger
equation posed on $\R^{d-n}$. In other words, scattering is expected
\emph{somehow} as if we considered
\begin{equation*}
  i\d_t v = -\Delta_z v +\lambda|v|^{2\si} v,\quad z\in \R^{d-n},
\end{equation*}
and the above lemma is the counterpart of small data scattering in
$H^1(\R^{d-n})$ for $L^2$-critical or supercritical nonlinearities,
and the presence of the extra variable $y$ reads in the upper bound
$\si<\tfrac{2}{d-2}$, to make the nonlinearity
energy-subcritical. For large data, global existence and some blow-up results have
been considered in \cite{Carles2011}. 
Moreover,  scattering for 
\eqref{GP}, for some $\sigma$, $d$ and $n$, was studied in  
\cite{AntoCarSilva2015, CarlesGallo2015,HaniThoman2016}.
\bigbreak

Consider the focusing case $\lambda=-1$, which is the core of this paper.
In the case $0<\sigma<2/d$ the Cauchy problem \eqref{GP} is globally
well-posed, regardless of the sign of $\lambda$. Moreover,  for small 
initial data the solution can be 
extended to a global one in the case $2/d<\sigma<2/(d-2)$.   The issue of existence, stability and instability of
standing waves has been studied in \cite{BEBOJEVI2017,OhtaPartial2018, STW}.

Introduce the following nonlinear elliptic problem
\begin{equation}\label{Ep}
  H \varphi+\varphi-|\varphi|^{2\sigma}\varphi=0,\quad
\varphi\in B_{1}\setminus \left\{0\right\}.
\end{equation}
We recall that a non-trivial solution $Q$ to  \eqref{Ep} is said to be
the ground state solution, if it has some minimal action among all
solutions of the elliptic problem \eqref{Ep}, i.e. 
 \begin{equation}\label{PM1}
S(Q)=\inf\left\{S(\varphi):\, \text{ $\varphi$ is a solution of \eqref{Ep}}\right\},
\end{equation}
where the action functional $S$ is defined by 
\begin{equation*}
S(u):=\frac{1}{2}\|\nabla_{x} u\|^{2}_{L^{2}}+\frac{1}{2}\|yu\|^{2}_{L^{2}}+\frac{1}{2}\|u\|^{2}_{L^{2}}-\frac{1}{2\sigma+2}\|u\|^{2\sigma+2}_{L^{\sigma+2}}.
\end{equation*}
In Lemma~\ref{L1} we obtain the existence of at least one ground state solution (see also Remark~\ref{Rgs}). 
\begin{remark}
  We could also consider, for any $\omega>0$,
  \begin{equation*}
     H \varphi+\omega\varphi-|\varphi|^{2\sigma}\varphi=0,\quad
\varphi\in B_{1}\setminus \left\{0\right\},
\end{equation*}
up to adapting the notations throughout the paper. We consider the
case $\omega=1$ for simplicity. 
\end{remark}
Our main result consists in establishing a necessary and
sufficient condition on the initial data below the ground state $Q$ to
determine the global behavior (blow-up/scattering) of the solution. As
recalled above, when scattering occurs, it is reminiscent of the
nonlinear Schr\"odinger equation without potential, posed on
$\R^{d-n}$. With this in mind,  we define the following functional of
class $C^{2}$ on $B_{1}$,
\begin{equation}\label{Fp}
P(u)=\frac{2}{d-n}\|\nabla_{z}u\|^{2}_{L^{2}}-\frac{\sigma}{\sigma+1}\|u\|^{2\sigma+2}_{L^{\sigma+2}},
\end{equation}
and we define the following subsets in ${B}^{1}$, 
\begin{align*}
\mathcal{K}^{+}&=\bigl\{ \varphi\in {B}_{1}: S(\varphi)<S(Q),\quad P(\varphi)\geq 0\bigl\}, \\
\mathcal{K}^{-}&=\bigl\{ \varphi\in {B}_{1}: S(\varphi)<S(Q),\quad P(\varphi)<0\bigl\}.  
\end{align*}
By a scaling argument, it is not difficult to show that $\mathcal{K}^{\pm}\neq \emptyset$. 
In our main result, we will show  that the sets
$\mathcal{K}^{+}$ and $\mathcal{K}^{-}$ are
invariant under the flow generated  by the  equation
\eqref{GP}. Moreover,  we obtain a sharp criterion between blow-up and
scattering for \eqref{GP} in terms of the functional $P$ given by
\eqref{Fp}. In the case of a full confinement ($n=d$), such results
were initiated in\cite{Zhang2005,ShuZhang2006}. Of course, in the
absence of fully dispersive direction, the dichotomy concerns global
existence \emph{vs.} blow-up, and scattering cannot hold. The proof
of scattering properties represents a large part of the present
paper. 

The assumption $\sigma>\tfrac{2}{d-n}$ is needed to prove the
Lemmas~\ref{L1} and \ref{L3} (existence and characterization of the
ground states) and the profile decomposition result (see
Proposition~\ref{Ls3}).  Thus, in the case $\lambda=-1$, we assume
\[\frac{2}{d-n}<\sigma<\frac{2}{d-2}.\]
This condition implies that $n=1$ in the statement below, a condition
which is reminiscent of \cite{TzVi2016}, where a partial
one-dimensional \emph{geometrical} confinement is considered ($y\in
\T$). Also, a step of our proof requires the extra property $\si\ge
\tfrac{1}{2}$, and so we restrict to dimensions $2\le d\le 5$. 
\begin{theorem}\label{Th1}
  Let $\lambda=-1$, $n=1$, $\si\ge \tfrac{1}{2}$ with
  $\tfrac{2}{d-1}<\si<\tfrac{2}{d-2}$, and $u_0\in B_1$. 
Let $u\in C(I;B_1)$ be the corresponding solution of \eqref{GP}  with initial data $u_{0}$ and lifespan $I=(T_{-}, T_{+})$. \\
(i) If $u_{0}\in \mathcal{K}^{+}$, then the corresponding solution $u(t)$ exists globally and scatters.\\
(ii) If $u_{0}\in \mathcal{K}^{-}$, then one of the following two cases occurs:
 \begin{enumerate}
    	\item  The solution blows up in positive time, i.e.,
          $T_{+}<\infty$ and
          \[\lim_{t\rightarrow T_+}\|\nabla_{x} u(t)\|^{2}_{L^{2}}=\infty.\]
      \item  The solution blows up at infinite positive time,  i.e.,  $T_{+}=\infty$ and there exists a sequence $\left\{t_{k}\right\}$ such that $t_{k}\rightarrow \infty$ and $\lim_{t_{k}\rightarrow \infty}\|\nabla_{x} u(t_{k})\|^{2}_{L^{2}}=\infty$.
		\end{enumerate}
An analogous statement holds for negative time.
\end{theorem}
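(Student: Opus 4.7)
The strategy is to adapt the Kenig--Merle concentration--compactness/rigidity machinery to the partial confinement setting, with $z\in\R^{d-1}$ as the dispersive direction.

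The first step is to establish invariance and coercivity. From the variational characterization of $Q$ (Lemmas~\ref{L1} and \ref{L3}), I would derive the implication that $P(\varphi)\le 0$ for $\varphi\in B_1\setminus\{0\}$ forces $S(\varphi)\ge S(Q)$. Combined with the conservation $S(u(t))=S(u_0)<S(Q)$ and the continuity of $t\mapsto P(u(t))$, this rules out $P(u(t))=0$ on the lifespan, giving the flow-invariance of $\mathcal{K}^{\pm}$. A rescaling argument on the ground state minimization should also produce a quantitative coercivity estimate $P(u)\ge c\,(S(Q)-S(u))$ on $\mathcal{K}^{+}$ and $P(u)\le -c\,(S(Q)-S(u))$ on $\mathcal{K}^{-}$. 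On $\mathcal{K}^{+}$ this gives an a priori bound on $\|\nabla_z u(t)\|_{L^2}$ and, after absorbing the nonlinear term via Gagliardo--Nirenberg (using mass conservation and the confinement to control $\|\nabla_y u\|_{L^2}+\|yu\|_{L^2}$), a uniform bound on $\|\nabla_x u(t)\|_{L^2}$. Global existence in~(i) then follows.

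For the blow-up dichotomy~(ii), since $u_0$ may fail to lie in $\Sigma$, the standard variance is ill-defined. I would instead work with a localized $z$-virial $V_R(t)=\int\chi(z/R)\,|z|^2\,|u(t,x)|^2\,dx$ for a smooth cut-off $\chi$, and compute
\begin{equation*}
V_R''(t)=4(d-1)P(u(t))+\mathrm{err}_R(t),
\end{equation*}
where the error is controlled by $\|\nabla_z u(t)\|_{L^2}^2$ and by mass escaping $\{|z|\le R\}$. On $\mathcal{K}^{-}$ the uniform bound $P(u(t))\le -\delta<0$ allows one to absorb the error by taking $R$ large, provided $\|\nabla_x u\|_{L^2}$ stays bounded on a time interval $[0,T]$. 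Iterating over the largest such interval gives either the finite-time blow-up of case~(1) or the existence of a sequence $t_k\to\infty$ along which $\|\nabla_x u(t_k)\|_{L^2}\to\infty$, which is case~(2).

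The hard part is the scattering assertion in~(i). Assuming it fails, set $S_c=\inf\{S(u_0):u_0\in\mathcal{K}^{+},\ u\text{ does not scatter forward}\}$, which is positive by Proposition~\ref{prop:small-general}. For a minimizing sequence of data, I would apply the linear profile decomposition in $B_1$ (Proposition~\ref{Ls3}) to $e^{-itH}u_{0,k}$; a nonlinear profile/perturbation argument then reduces the problem to a single nonlinear profile, producing a critical element $u_c\in\mathcal{K}^{+}$ with $S(u_c)=S_c$ whose forward orbit is precompact in $B_1$ modulo translations in $z$. The rigidity step must then rule out such $u_c$ by applying the localized $z$-virial once more: precompactness makes $\mathrm{err}_R(t)$ uniformly small, while the coercivity estimate yields $P(u_c(t))\ge c(S(Q)-S_c)>0$, so $V_R''\le -c'<0$, which is incompatible with $V_R\ge 0$ and the mass conservation of $u_c$. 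I expect the main obstacle to lie in this concentration--compactness step: extracting a clean profile decomposition compatible with the mixed dispersive/confined dynamics of $e^{-itH}$, and understanding how $z$-translations and $z$-momentum interact with precompactness (so that the orbit can be made precompact modulo \emph{only} $z$-translations). The restrictions $n=1$ and $\sigma\ge 1/2$ most likely enter precisely here---the first to enable an eigenfunction orthogonality analysis in the variable $y$, and the second to supply the Lipschitz-type nonlinear estimates underpinning the perturbation lemma.
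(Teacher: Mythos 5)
Your architecture matches the paper's: flow-invariance of $\mathcal{K}^{\pm}$ via the variational characterization, a localized $z$-virial for blow-up, and a Kenig--Merle scheme (linear profile decomposition in $B_1$, perturbation lemma, construction of a precompact-modulo-$z$-translation critical element, then a virial-based rigidity argument) for scattering. Parts~(i)~(global existence) and~(ii)~(blow-up dichotomy) are sketched in essentially the same way as the paper. However, your rigidity step contains two genuine errors.

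First, the claimed quantitative coercivity $P(u)\ge c\,(S(Q)-S(u))$ on $\mathcal{K}^{+}$ is false. Take $u=\eps v$ for fixed $v\in B_1$ and $\eps\to 0$: then $P(\eps v)\sim \eps^2\,\tfrac{2}{d-n}\|\nabla_z v\|_{L^2}^2\to 0$ while $\beta-S(\eps v)\to\beta>0$, so no such constant $c$ exists. The paper (Lemma~\ref{L5}) proves only the one-sided estimate $P(\varphi)\le -\tfrac{4}{d-n}(\beta-S(\varphi))$ on $\mathcal{K}^{-}$; for the critical element it replaces your coercivity estimate by a soft compactness argument: if $P(u_c(t_k))\to 0$ along some $t_k$, precompactness extracts a strong $B_1$-limit $f$ with $P(f)=0$ and $S(f)=\tau_c<\beta$, which by Remark~\ref{Remarp} forces $f=0$, contradicting $S(f)=\tau_c>0$. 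So the uniform lower bound $\inf_t P(u_c(t))\ge\eta>0$ comes from precompactness, not from a variational inequality.

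Second, your sign is backwards. With $P(u_c(t))\ge\eta>0$ and errors made small, the identity $V_R''=4(d-n)P(u_c)+\mathrm{err}_R$ gives $V_R''\ge c'>0$, not $\le -c'<0$. The concavity-contradiction ``$V_R''<0$ and $V_R\ge 0$'' is the $\mathcal{K}^{-}$ (blow-up) argument and does not apply here. The actual contradiction, as the paper carries out, compares $\int_{t_0}^{t^*}V''\ge 2\eta(d-n)(t^*-t_0)$ against the a priori bound $|V'(t)|\lesssim R_{t^*}$, with $R_{t^*}$ chosen to grow only like $\tfrac{\eta(d-n)}{2L}t^*$; since the linear growth rate of the left side strictly exceeds $2L\cdot(\text{slope of }R_{t^*})$, one gets a contradiction for $t^*$ large. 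This requires two further ingredients you do not supply: the vanishing of the $z$-momentum $G(u_c)=0$ (obtained by a Galilean boost in $z$, which would otherwise produce a lower-action non-scattering datum), and the sublinear growth $|z(t)|=o(t)$ of the translation parameter (Step~1 of Lemma~\ref{final}, via a truncated center-of-mass). Without these, the $R\sim t^*$ choice cannot be justified and the virial integral does not close.
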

\begin{remark}
  We note that if the initial datum satisfies $u_{0}\in
  \mathcal{K}^{-}$ and $xu_{0}\in L^{2}(\mathbb{R}^{d})$ (that is,
  $u_0\in \Sigma$),
  then the corresponding solution blows up in finite time (see
  \eqref{LV3} below for more details, with $R=\infty$). In particular, the condition $P(u)\geq 0$ in Theorem  \ref{Th1} is sharp for global existence.
\end{remark}
The proof of the scattering result is based on the
concentration/compactness and rigidity  argument of Kenig-Merle \cite{
  KenigMerle2006}. In \cite{DuyHolmerRoude2008},
Duyckaerts-Holmer-Roudenko studied  \eqref{GP} with $d=3$,  
$\sigma=1$, without harmonic potential, and proved that if $u_{0}\in
H^{1}(\mathbb{R}^{3})$ satisfies (see also
\cite{HolmerRoudenko2008} in the radial case)
\begin{equation*}
M(u_{0})E(u_{0})<M(Q)E(Q), \quad \|u_{0}\|_{L^{2}}\|\nabla u_{0}\|_{L^{2}}<\|Q\|_{L^{2}}\|\nabla Q\|_{L^{2}},
\end{equation*}
then the corresponding solution exists globally and scatters in
$H^{1}(\mathbb{R}^{3})$, where $Q$ is the ground state of the equation
\eqref{Ep}. However, it seems that the method developed in
\cite{DuyHolmerRoude2008,HolmerRoudenko2008} cannot be applied to  \eqref{GP} with harmonic potential. The main difficulty concerning 
\eqref{GP} is clearly the presence of the partial harmonic
confinement. In particular, we cannot apply scaling techniques to
obtain the critical element (see the proof of Proposition~5.4 in
\cite{HolmerRoudenko2008}). To overcome this problem, we use a
variational approach based on the work of Ibrahim-Masmoudi-Nakanishi
\cite{IbraMasmNaka2011} (see also \cite{IkeaInu2017}).  We mention the
works of Ikea-Inu \cite{ IkeaInu2017} and Guo-Wang-Yao
\cite{GuoWangYao2018} who also obtained analogous result to Theorem
\ref{Th1} for the focusing NLS equation with a potential. The proof of
the blow-up result is based on the techniques developed by Du-Wu-Zhang
\cite{DuWuZhang2016}. 

It is worth mentioning that Fang-Xie-Cazenave \cite{FangXieCaze2011}
and Akahor-Nawa \cite{AkahoriNawa2013} extended the results in
Holmer-Roudenko \cite{HolmerRoudenko2008} and
Duyckaerts-Holmer-Roudenko \cite{DuyHolmerRoude2008} in terms of
dimension and power. Concerning the scattering theory with a smooth
short range potential
in the energy-subcritical case, we  refer to
\cite{Carles2016,CassanoDAncona2016,Hong2016,Lafontaine2016}; see also
\cite{BanicaVisciglia2016,KillipMurphyVisanZheng}  for scattering
theory with a singular potential in the energy-subcritical case. For other
results, see e.g. 
\cite{FarahGuzman2017, DodsonMurphy2017,
  BellazziniForcella2019}, and \cite{HaniThoman2016} in the case of a
partial confinement leading to long range scattering for small data.

\begin{remark}
The tools that we use also yield scattering 
results in the defocusing case $\lambda=+1$.
For $d\ge 2$, $n=1$, and $\si\ge \tfrac{1}{2}$ with
 $   \tfrac{2}{d-1}< \si<\tfrac{2}{d-2}$, consider $u_0\in B_1$ and
 $u\in C(\R;B_1)$ the solution to 
  \begin{equation*}
    i\d_t u = H u +|u|^{2\si}u\quad;\quad u_{\mid t=0}=u_0. 
  \end{equation*}
  Then $u$ scatters in $B_1$. As pointed out in
  \cite[Section~7]{DuyHolmerRoude2008} in the 
case of the 3D cubic Schr\"odinger equation without potential, the
proof is essentially the 
same as for scattering in the focusing case (Theorem~\ref{Th1}). Also,
in this defocusing case, we simply recover
\cite[Theorem~1.5]{CarlesGallo2015}, based on Morawetz estimates, where the assumption $\si\ge
\tfrac{1}{2}$ was not needed. 
\end{remark}

\subsection*{Organization of the paper}
 In the next section we introduce Strichartz estimates  specific to the
 present context, and show that  a specific norm suffices to
 ensure scattering.  In Section~\ref{S:1}, we
 show variational estimates, which will be key to obtain blow-up and
 scattering results in the focusing case. In Section~\ref{S:2}, we
 show the blow-up results 
 and the global part of Theorem~\ref{Th1} (i). Finally, in
 Section~\ref{SC1} we prove the scattering part of Theorem~\ref{Th1}.

\subsection*{Notations}
We summarize the notation used throughout the paper: $\mathbb{Z}$
denotes the set of all  integers.  We will use $A\lesssim B$
(resp. $A\gtrsim B$) for inequalities of type $A\leq  CB$
(resp. $A\geq CB$), where $C$ is a positive constant.  If both the
relations hold true, we write $A\sim B$.
We denote by $\text{NLS}(t)u_{0}$ the solution of the IVP \eqref{GP}
with initial data $u_{0}$.\\ 
For $1\leq p\leq\infty$, we denote its conjugate by
$p^{\prime}=\tfrac{p}{p-1}$. Moreover, $L^{p}=L^{p}(\mathbb{R}^{d};
\mathbb{C})$ are the classical Lebesgue spaces.  The scale of harmonic
(partial) Sobolev spaces is defined as follows, see
\cite{BenCasteMeht2008}: for $s\geq0$ 
\begin{equation*}
B_{s}=B_{s}(\mathbb{R}^{d})=\left\{ u\in L^{2}(\mathbb{R}^{d}): H^{s/2}u\in L^{2}(\mathbb{R}^{d}) \right\}
\end{equation*}
endowed with the natural norm  denoted by $\|\cdot\|_{B_{s}}$, and up to equivalence of norms we have (see \cite[Theorem~2.1]{BenCasteMeht2008})
\begin{equation*}
\|u\|^{2}_{B_{s}}=\|u\|^{2}_{H^{s}}+\||y|^{s}u\|^{2}_{L^{2}}.
\end{equation*}
For $\gamma\in \mathbb{Z}$, we set $I_{\gamma}=\pi[\gamma-1, \gamma+1)$. Let $\ell^{p}_{\gamma}L_{t}^{q}(I_{\gamma}; L_{x}^{r}(\mathbb{R}^{d}))$ be the space of measurable functions $u:\mathbb{R}\rightarrow L_{x}^{r}(\mathbb{R}^{d})$ such that the norm $\|u\|_{\ell^{p}_{\gamma}L^{q}(I_{\gamma}; L_{x}^{r}(\mathbb{R}^{d}))}$ is finite, with
\begin{equation*}
\|u\|^{p}_{\ell^{p}_{\gamma}L_{t}^{q}(I_{\gamma}; L_{x}^{r}(\mathbb{R}^{d}))}=\sum_{\gamma\in \mathbb{Z}}\|u\|^{p}_{L_{t}^{q}(I_{\gamma}; L_{x}^{r}(\mathbb{R}^{d}))}.
\end{equation*}
To simplify the notation, we will use $\|u\|_{\ell^{p}_{\gamma}L^{q} L^{r}}$ when it is not ambiguous. Finally, we write $\|u\|_{\ell^{p}_{\gamma_{0}\leq\gamma\leq\gamma_{1}}L^{q}(I_{\gamma}; L_{x}^{r})}$ to signify
\begin{equation*}
\|u\|^{p}_{\ell^{p}_{\gamma_{0}\leq\gamma\leq\gamma_{1}}L^{q}(I_{\gamma}; L_{x}^{r})}=\sum_{\gamma_{0}\leq\gamma\leq\gamma_{1}}\|u\|^{p}_{L_{t}^{q}(I_{\gamma}; L_{x}^{r}(\mathbb{R}^{d}))}.
\end{equation*}

\section{Strichartz estimates and scattering}\label{S:es} 

\subsection{Local Strichartz estimates and local well-posedness}

Denote the (partial) harmonic potential by $V(x) = |y|^2$ (recall that $x=(y,z)\in
\R^n\times \R^{d-n}$). As $V$ is quadratic, it enters the
general framework of at most quadratic smooth potentials considered in
\cite{Fujiwara}. In particular, the propagator associated to
$H$ enjoys local dispersive estimates (as can be seen also from
generalized Mehler formula, see e.g. \cite{HormanderQuad})
\begin{equation*}
  \|e^{-itH}\|_{L^1(\R^d)\to L^\infty(\R^d)}\lesssim
  \frac{1}{|t|^{d/2}},\quad |t| \le 1,
\end{equation*}
which in turn imply \emph{local in time} Strichartz estimates,
\begin{equation*}
  \|e^{-itH}u_0\|_{L^q(I;L^r(\R^d))}\le C_q(I)
  \|u_0\|_{L^2(\R^d)},\quad
  \frac{2}{q}=d\(\frac{1}{2}-\frac{1}{r}\),\quad 2\le r<\tfrac{2d}{d-2},
\end{equation*}
where the constant $C_q(I)$ actually depends on $|I|$. Indeed, we
compute for instance
\begin{equation*}
  e^{-itH}\(e^{-|y|^2/2}v_0(z)\) =
  e^{-|y|^2/2+in t}\(e^{it\Delta_{\R^{d-n}}}v_0\)(z). 
\end{equation*}
Local in time Strichartz estimates suffice to establish local
well-posedness in the energy space, as proved in \cite{Carles2011}. We
give some elements of proof which introduce some useful vector fields. 
\begin{lemma}\label{lem:LWP}
  Let $d\ge 2$, $1\le n\le d-1$, $0<\si<\tfrac{2}{d-2}$, and
  $u_0\in B_1$. There exists $T=T(\|u_0\|_{B_1})$ and a unique
  solution $u\in C([-T,T];B_1)\cap
  L^{\frac{4\si+4}{d\si}}([-T,T];L^{2\si+2}(\R^d))$ to \eqref{GP}. In
  addition, the 
  conservations \eqref{Mome} hold. \\
  Either the solution is global in positive time, $u\in  C(\R_+;B_1)\cap
  L^{\frac{4\si+4}{d\si}}_{\rm loc}(\R_+;L^{2\si+2}(\R^d))$, or there
  exists $T_+>0$ such that
  \begin{equation*}
    \|\nabla_x u(t)\|_{L^2(\R^d)}\Tend t {T_+} \infty.
  \end{equation*}
  If $\lambda=+1$, then the
  solution is global in time, $u\in C(\R;B_1)\cap
  L^{\frac{4\si+4}{d\si}}_{\rm loc}(\R;L^{2\si+2}(\R^d))$.
\end{lemma}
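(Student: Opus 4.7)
The plan is to construct the solution by a standard contraction mapping argument on the Duhamel formulation
\begin{equation*}
u(t)=e^{-itH}u_{0}-i\lambda\int_{0}^{t}e^{-i(t-s)H}\bigl(|u|^{2\sigma}u\bigr)(s)\,ds,
\end{equation*}
using the local-in-time Strichartz estimates recalled above, with the $H^{1}$-admissible pair $(q,r)=\bigl(\tfrac{4\sigma+4}{d\sigma},2\sigma+2\bigr)$. Since $0<\sigma<\tfrac{2}{d-2}$, one has $r<\tfrac{2d}{d-2}$, so $H^{1}(\R^{d})\hookrightarrow L^{r}(\R^{d})$ by Sobolev embedding. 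I would set up the fixed point on a ball of
\begin{equation*}
X_{T}=C([-T,T];B_{1})\cap L^{q}([-T,T];L^{r}(\R^{d})),
\end{equation*}
for $T=T(\|u_{0}\|_{B_{1}})>0$ small enough; the contraction factor arises from a positive power of $T$ obtained via H\"older's inequality in time applied in the dual pair $(q',r')$, using the pointwise bound $\bigl||u|^{2\sigma}u-|v|^{2\sigma}v\bigr|\lesssim (|u|^{2\sigma}+|v|^{2\sigma})|u-v|$.

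The main obstacle, and essentially the only subtlety, is to control the $B_{1}$-norm of $u$, because neither $\nabla_{y}$ nor multiplication by $y$ commutes with $H$. Two equivalent devices work. Either one exploits the isometry $e^{-itH}\colon B_{1}\to B_{1}$ together with $\|u\|_{B_{1}}^{2}=\|u\|_{L^{2}}^{2}+\|H^{1/2}u\|_{L^{2}}^{2}$ and a fractional chain rule for $H^{1/2}$, as in \cite{Carles2011}. Or one introduces, alongside the trivially commuting $\partial_{z_{k}}$, the first-order Heisenberg vector fields in the confined direction
\begin{equation*}
\mathcal{V}_{j}(t)=\cos(2t)\,y_{j}+i\sin(2t)\,\partial_{y_{j}},\qquad \mathcal{W}_{j}(t)=i\sin(2t)\,y_{j}+\cos(2t)\,\partial_{y_{j}},\quad 1\le j\le n,
\end{equation*}
which a direct computation shows commute with $i\partial_{t}-H$, and whose $L^{2}$-norms generate a norm equivalent to $\|\cdot\|_{B_{1}}$ uniformly for $|t|\le T$. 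Applied to the Duhamel formula, each such vector field satisfies the same type of Strichartz estimate with nonlinearity controlled pointwise by $|u|^{2\sigma}|\mathcal{V}u|$ (plus lower-order algebraic terms from the chain rule), and the contraction closes in $X_{T}$ together with its analogue at the level of the vector fields. Uniqueness in $X_T$ is standard.

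Once the local solution is in hand, the conservations \eqref{Mome} are obtained by the usual approximation of $u_{0}$ in $B_{1}$ by a smooth sequence, using the formal conservation laws, and passing to the limit. For the blow-up alternative, suppose $T_{+}<\infty$ while $\|\nabla_{x}u(t_{k})\|_{L^{2}}$ stays bounded along some $t_{k}\nearrow T_{+}$; mass conservation and the Gagliardo-Nirenberg inequality (available because $\sigma<\tfrac{2}{d-2}$) then bound $\|u(t_{k})\|_{L^{2\sigma+2}}$, which together with energy conservation bounds $\|yu(t_{k})\|_{L^{2}}$, hence $\|u(t_{k})\|_{B_{1}}$. Since the local existence time depends only on $\|u_{0}\|_{B_{1}}$, one could extend the solution beyond $T_{+}$, contradicting maximality. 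This is also why the momentum is absent from the criterion, as $|G(u)|\le \|u\|_{L^{2}}\|\nabla_{z}u\|_{L^{2}}\le \|u_{0}\|_{L^{2}}\|\nabla_{x}u\|_{L^{2}}$ is automatically controlled by $\|\nabla_{x}u\|_{L^{2}}$. Finally, when $\lambda=+1$ the energy is a sum of nonnegative terms, so $\|\nabla_{x}u\|_{L^{2}}^{2}+\|yu\|_{L^{2}}^{2}\le 2E(u_{0})$ uniformly in time, which combined with mass conservation gives uniform $B_{1}$ control and hence global existence.
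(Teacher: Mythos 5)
Your proposal is correct and follows essentially the same route as the paper: a contraction argument on Duhamel's formula using local-in-time Strichartz estimates, with the $B_1$-regularity carried by the Heisenberg-picture vector fields in the confined direction (your $\mathcal{V}_j,\mathcal{W}_j$ are, up to constant factors, identical to the paper's $A_2,A_1$, being $e^{-itH}y_je^{itH}$ and $e^{-itH}\partial_{y_j}e^{itH}$). Two small remarks on phrasing. First, the equivalence of the $B_1$-norm with the $L^2$-norms of $(\mathrm{Id},A_1(t),A_2(t),\nabla_z)$ acting on $e^{itH}$ of the data is uniform for \emph{all} $t$, not only $|t|\le T$, because of the rotation-matrix structure relating $(A_1,A_2)$ to $(y,-i\nabla_y)$. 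Second, for the gauge-invariant nonlinearity $|u|^{2\sigma}u$ the vector fields obey the clean pointwise bound $|A_j(t)(|u|^{2\sigma}u)|\lesssim |u|^{2\sigma}|A_j(t)u|$ with no extra ``lower-order algebraic terms'': $A_j(t)$ is a conjugated gradient, $A_j(t)=c(t)e^{i\phi}\nabla_y(e^{-i\phi}\cdot)$, and the phase passes through the nonlinearity.

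On the blow-up alternative you take a genuinely different (though equally standard) path. You bound $\|yu(t)\|_{L^2}$ via energy conservation combined with Gagliardo--Nirenberg control of $\|u\|_{L^{2\sigma+2}}$, whereas the paper avoids energy conservation entirely: it differentiates $\|yu(t)\|_{L^2}^2$, obtains
\begin{equation*}
\frac{d}{dt}\|y u(t)\|_{L^2}^2=4\,\mathrm{Im}\int_{\R^d}\bar u\,y\cdot\nabla_y u\,dx,
\end{equation*}
and concludes by Cauchy--Schwarz and Gronwall that boundedness of $\|\nabla_y u\|_{L^2}$ locally in time forces boundedness of $\|yu\|_{L^2}$. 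Both give the same conclusion; the paper's variant is logically lighter in that it does not need the energy (only mass conservation plus the virial identity), while yours is arguably more familiar. Your argument for closing the alternative (a bounded subsequence of $\|\nabla_x u(t_k)\|_{L^2}$ plus $T=T(\|u_0\|_{B_1})$ contradicts maximality) is complete, and so is your treatment of the defocusing global statement via positivity of the energy. Overall the proposal is a correct proof of the lemma.
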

\begin{proof}[Sketch of the proof]
  The proof relies on a classical fixed point argument applied to
  Duhamel's formula
  \begin{equation*}
    u(t) = e^{-itH}u_0 -i\lambda\int_0^t
    e^{-i(t-s)H}\(|u|^{2\si}u\)(s)ds, 
  \end{equation*}
  using (local in time) Strichartz estimates. The gradient $\nabla_z$
  commutes with $ e^{-itH}$, since there is no potential in the
  $z$ variable. On the other hand, in the $y$ variable, the presence
  of the harmonic potential ruins this commutation property. It is
  recovered by considering the vector fields
  \begin{equation*}
    A_1(t) = y\sin(2 t) -i\cos(2 t)\nabla_y,\quad A_2(t) =-
    y\cos(2 t)-i\sin(2 t)\nabla_y . 
  \end{equation*}
  We recall from e.g. \cite[Lemma~4.1]{AntoCarSilva2015} the main
  properties that we will use:
  \begin{equation*}
  \begin{pmatrix}
    A_1(t)\\
A_2(t)
  \end{pmatrix}
= 
\begin{pmatrix}
  \sin (2  t) & \cos (2  t)\\
-\cos (2 t) & \sin (2 t)
\end{pmatrix}
\begin{pmatrix}
  y \\
-i\nabla_y
\end{pmatrix},
\end{equation*}
they correspond to the conjugation of gradient and momentum by
   the free flow,
   \begin{equation*}
     A_1 (t) = e^{-itH }(-i\nabla_y)e^{itH },\quad A_2(t) =
    - e^{-itH}y\, e^{itH} ,
   \end{equation*}
and therefore, they commute with the linear part of \eqref{GP}:
   $[i\d_t-H,A_j(t)]=0$.
These vector fields  act on gauge invariant nonlinearities like
derivatives, and we have the pointwise estimate
  \begin{equation*}
    \left| A_j(t)\(|u|^{2\si}u\) \right|\lesssim
    |u|^{2\si}|A_j(t)u|.
  \end{equation*}
Once all of this is noticed, we can just mimic the standard proof of
local well-posedness of NLS in $H^1(\R^d)$ 
(see e.g. \cite{CB}), by considering $(A_1(t),A_2(t),\nabla _z)$
instead of $(\nabla_y,\nabla_z)$. The conservations \eqref{Mome}
follow from classical arguments (see e.g. \cite{CB}).
\smallbreak

From the construction, either the solution is global, or the
$B_1$-norm becomes unbounded in finite time. Like in the statement of
the lemma, we consider positive time only, the case of negative time
being similar. The obstruction to global existence reads
\begin{equation*}
  \|u(t)\|_{B_1}\Tend t {T_+}\infty,
\end{equation*}
for some $T_+>0$. But a standard virial computation yields
\begin{equation*}
  \frac{d}{dt}\|y u(t)\|_{L^2}^2 = 4\IM \int_{\R^d} \bar u(t,x)y\cdot
  \nabla_y u(t,x)dy.
\end{equation*}
Cauchy-Schwarz inequality shows that if $\|\nabla_y u(t)\|_{L^2}$
remains bounded locally in time, then so does $\|y u(t)\|_{L^2}$,
hence the blow-up criterion. Global
existence in the case $\lambda=+1$ is straightforward.
\end{proof}
For future reference, we note that
\begin{equation}\label{eq:vector-norm}
  \|e^{itH}u(t)\|_{B_1}^2 \sim \sum_{A\in \{{\rm Id},A_1,A_2,\nabla_z\}}
  \|A(t)u(t)\|_{L^2(\R^d)}^2. 
\end{equation}
\subsection{Global Strichartz estimates}

To prove scattering results, we use global in time Strichartz
estimates, taking advantage of the full dispersion in the $z$
variable, and of the local dispersion in the total variable $x=(y,z)$.
\begin{lemma}[Global Strichartz estimates, Theorem~3.4 from \cite{AntoCarSilva2015}]\label{Ls1} 
Let $d\geq2$, $1\leq n\leq d-1$ and $2\leq r<\tfrac{2d}{d-2}$. Then the solution $u$ to $(i\partial_{t}-H)u=F$ with initial data $u_{0}$ obeys
\begin{equation}\label{Sz}
\|u\|_{\ell^{p_{1}}_{\gamma}L^{q_{1}}L^{r_{1}}}\lesssim
\|u_{0}\|_{L^{2}(\mathbb{R}^{d})} +\|F\|_{\ell_{\gamma}^{{p}_{2}^{\prime}}L^{{q}_{2}^{\prime}}L^{{r}_{2}^{\prime}}} ,
\end{equation}
provided that the following conditions hold:
\begin{equation}\label{paa1}
\frac{2}{q_{k}}=d\left(\frac{1}{2}-\frac{1}{r_{k}}\right),\quad\frac{2}{p_{k}}=(d-n)\left(\frac{1}{2}-\frac{1}{r_{k}}\right), \quad k=1, 2.\\
\end{equation}
\end{lemma}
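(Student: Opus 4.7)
My plan is to prove \eqref{Sz} via a Keel-Tao $TT^{*}$ argument combined with a block decomposition of time along the intervals $I_{\gamma}$. Two observations drive the proof. First, since $H_{y}:=-\Delta_{y}+|y|^{2}$ and $-\Delta_{z}$ commute, the propagator factorises as $e^{-itH}=e^{-itH_{y}}e^{it\Delta_{z}}$, and combining Mehler's formula for $e^{-itH_{y}}$ with the standard dispersive bound for $e^{it\Delta_{z}}$ yields the local dispersive estimate
\begin{equation*}
  \|e^{-i\tau H}\|_{L^{r'}(\R^{d})\to L^{r}(\R^{d})}\lesssim |\tau|^{-d(1/2-1/r)},\qquad |\tau|\le 1,
\end{equation*}
for $2\le r<2d/(d-2)$. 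Second, since the spectrum of $H_{y}$ on $\R^{n}$ is $\{2|k|+n:k\in \N^{n}\}$ with integer spacing, we have $e^{-i\pi H_{y}}=e^{-i\pi n}\,\mathrm{Id}$, so the harmonic evolution is $\pi$-periodic up to a global phase and
\begin{equation*}
  e^{-i\pi\gamma H}=e^{-i\pi n\gamma}\,e^{i\pi\gamma\Delta_{z}},\qquad \gamma\in \Z.
\end{equation*}

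First I would establish the local-in-time Strichartz estimate: applying the Keel-Tao machinery to the local dispersive bound gives
\begin{equation*}
  \|e^{-itH}u_{0}\|_{L^{q_{1}}(I_{\gamma};L^{r_{1}}_{x})}\lesssim \|u_{0}\|_{L^{2}},\qquad \tfrac{2}{q_{k}}=d\bigl(\tfrac{1}{2}-\tfrac{1}{r_{k}}\bigr),
\end{equation*}
uniformly in $\gamma\in \Z$. To upgrade this to the $\ell^{p_{1}}_{\gamma}$-summed norm with $2/p_{k}=(d-n)(1/2-1/r_{k})$, I would apply $TT^{*}$ globally and split the bilinear form $\langle TT^{*}F,G\rangle$ along blocks $I_{\gamma}\times I_{\gamma'}$. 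Diagonal terms $\gamma=\gamma'$ are absorbed by the local Strichartz bound. For off-diagonal terms, writing $t=\pi\gamma+s$ and $t'=\pi\gamma'+s'$ with $|s|,|s'|\le\pi$, the periodicity yields
\begin{equation*}
  e^{-i(t-t')H}=e^{-i\pi n(\gamma-\gamma')}\,e^{-i(s-s')H_{y}}\,e^{i(s-s')\Delta_{z}}\,e^{i\pi(\gamma-\gamma')\Delta_{z}},
\end{equation*}
in which the last factor is a pure $(d-n)$-dimensional free Schr\"odinger evolution over time $\sim|\gamma-\gamma'|$, contributing the decay $|\gamma-\gamma'|^{-(d-n)(1/2-1/r)}$ from $L^{r'}_{z}$ to $L^{r}_{z}$, while the middle factors are uniformly bounded on the compact range of $s-s'$. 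Discrete Hardy-Littlewood-Sobolev in $\gamma,\gamma'$ with exponents matching \eqref{paa1} then closes the sum, and the Christ-Kiselev lemma transfers the full-line bilinear bound to the retarded Duhamel estimate (in the non-endpoint regime $q_{1}>q_{2}'$).

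The main obstacle is that the $z$-dispersion naturally lives in the mixed norm $L^{r'}_{z}L^{2}_{y}\to L^{r}_{z}L^{2}_{y}$, whereas \eqref{Sz} uses the unmixed $L^{r}_{x}=L^{r}_{y}L^{r}_{z}$ norm. Reconciling these requires Minkowski's inequality (valid since $r_{k}\ge 2$) together with a local-in-time $L^{r'}_{y}\to L^{r}_{y}$ bound on $e^{-isH_{y}}$, obtained by interpolating its $L^{2}_{y}$ unitarity with the Mehler dispersive bound. Tracking the interplay between the local Hermite-type dispersion in $y$ and the global free dispersion in $z$, so as to produce a kernel decay in $|\gamma-\gamma'|$ that is summable against the admissible exponents $(p_{1},p_{2}')$, is the technical heart of the argument.
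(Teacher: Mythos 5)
The paper itself gives no proof of this lemma; it is quoted verbatim from \cite[Theorem~3.4]{AntoCarSilva2015}, and your proposal tracks the overall architecture of that proof: local dispersive estimate from Mehler's formula, $\pi$-periodicity of $e^{-itH_y}$ up to a phase, block decomposition along $I_\gamma$, free dispersion in $z$ to produce $|\gamma-\gamma'|$-decay, and discrete Hardy--Littlewood--Sobolev in $\gamma$. The numerology also checks out: since $2\le r<\tfrac{2d}{d-2}$ forces $q_k>2$, one always has $q_1>q_2'$, so Christ--Kiselev applies without reaching the Keel--Tao endpoint, and the discrete HLS exponent $\tfrac1{p_1'}+\tfrac1{p_2'}+(d-n)\big(\tfrac12-\tfrac1r\big)=2$ matches \eqref{paa1} (with $r_1\ne r_2$ recovered by interpolation).

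There is, however, a genuine flaw in the off-diagonal block estimate as you state it. After writing $t-t'=\pi(\gamma-\gamma')+(s-s')$ with $|s-s'|\le 2\pi$, you claim the middle factors $e^{-i(s-s')H_y}e^{i(s-s')\Delta_z}$ are ``uniformly bounded on the compact range of $s-s'$.'' This is false in the operator topology you need: $e^{i(s-s')\Delta_z}$ is bounded on $L^p_z$ only for $p=2$, and $e^{-i(s-s')H_y}$, viewed as a map $L^{r'}_y\to L^r_y$, blows up like $|\sin(2(s-s'))|^{-n(1/2-1/r)}$ with singularities at $s-s'\in\tfrac{\pi}{2}\Z\cap[-2\pi,2\pi]$. (You in fact contradict yourself one paragraph later when you invoke exactly this non-uniform Mehler bound.) The repair is to keep the $z$-evolution intact, $e^{i(s-s')\Delta_z}e^{i\pi(\gamma-\gamma')\Delta_z}=e^{i(t-t')\Delta_z}$, which is $L^{r'}_z\to L^r_z$ with the correct $|\gamma-\gamma'|^{-(d-n)(1/2-1/r)}$ decay when $|\gamma-\gamma'|\ge 2$; the remaining $y$-factor then has only locally integrable singularities of order $|\sin(2(s-s'))|^{-n(1/2-1/r)}$, and the bilinear form on each block $I_\gamma\times I_{\gamma'}$ must be estimated by a one-dimensional fractional-integration argument in $(s,s')$ (the block-wise analogue of local Strichartz), not by a uniform operator bound. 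This is precisely where the Minkowski step you flag at the end has to be carried out carefully, since the mixed $L^{r}_zL^{r}_y$ structure must be traded for $L^r_x$ while the $y$-kernel is singular.
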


Moreover, as in e.g. \cite{HolmerRoudenko2008} or \cite{TzVi2016},  we
will need the following inhomogeneous Strichartz estimates.

\begin{lemma}[Inhomogeneous Strichartz estimates]\label{Ls22}
Let $d\geq2$, $1\leq n\leq d-1$. Then  we have 
\begin{equation*}
   \left\| \int_0^t e^{-i(t-s)H} u(s)ds\right\|_{\ell^{p}_{\gamma}L^{q}L^{r}}\lesssim
   \|u\|_{\ell^{\tilde{p}'}_{\gamma}L^{\tilde{q}'}L^{{r}'}}, 
 \end{equation*}
provided that $q,\tilde q\in [1,\infty]$ and:
\begin{gather*}
   \frac{2}{p}+\frac{2}{\tilde p} = (d-n)\left(1-\frac{2}{r}\right),\\
   \frac{1}{p}+\frac{d-n}{r}<\frac{d-n}{2},\quad
                             \frac{1}{\tilde p}+\frac{d-n}{r}<\frac{d-n}{2}, \quad
                              \text{(acceptable pairs)}\\
   \frac{1}{p}+\frac{1}{\tilde p}<1.
 \end{gather*}
\end{lemma}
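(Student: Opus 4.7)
The strategy is the Foschi--Vilela argument for inhomogeneous Strichartz estimates, adapted to the anisotropic dispersion of $e^{-itH}$. The scaling relation $\tfrac{2}{p}+\tfrac{2}{\tilde p}=(d-n)(1-\tfrac{2}{r})$, together with the presence of $d-n$ in the acceptable-pair conditions, signals that the effective \emph{global} dispersion here is that of the free Schr\"odinger group in dimension $d-n$ (corresponding to the free variable $z$), while the harmonic direction $y$ contributes only on bounded time windows. This anisotropy is exactly what the time norm $\ell^{p}_{\gamma}L^{q}(I_{\gamma};L^{r}_x)$ encodes: local-in-time $L^q$ regularity on each unit interval, and $(d-n)$-dimensional Strichartz-type summation across intervals.

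My plan is first to decompose time into the intervals $I_\gamma$ and split the Duhamel operator as
\begin{equation*}
\mathcal{T}F(t)=\sum_{\mu\in\Z}T_{\mu}F(t),\qquad T_{\mu}F(t):=\int_{I_{\mu}\cap(-\infty,t)}e^{-i(t-s)H}F(s)\,ds,
\end{equation*}
and then to establish the block estimate
\begin{equation*}
\|T_{\mu}F\|_{L^{q}(I_{\gamma};L^{r}_x)}\lesssim (1+|\gamma-\mu|)^{-(d-n)(\tfrac{1}{2}-\tfrac{1}{r})}\|F\|_{L^{\tilde q'}(I_{\mu};L^{r'}_x)}.
\end{equation*}
For $\gamma=\mu$ this is a local-in-time inhomogeneous Strichartz estimate on a unit interval, which follows from the $d$-dimensional local dispersive bound $\|e^{-itH}\|_{L^{1}\to L^{\infty}}\lesssim|t|^{-d/2}$ for $|t|\le 1$ via Foschi's theorem for acceptable pairs; the acceptable-pair inequalities in the statement are precisely what this step requires. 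For $|\gamma-\mu|\gtrsim 1$, the estimate comes from the factorization $e^{-itH}=e^{-itH_y}\otimes e^{it\Delta_z}$: the $z$-factor yields the desired $(d-n)$-dimensional decay $|t-s|^{-(d-n)/2}$ in the separation $|\gamma-\mu|$, while the oscillatory $y$-factor is absorbed using the local dispersive bound inside each unit window.

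The final step is to sum over $\mu$ and $\gamma$ via the discrete Hardy--Littlewood--Sobolev inequality on $\Z$: the decay rate $(d-n)(\tfrac{1}{2}-\tfrac{1}{r})$ matches, through the scaling relation of the lemma, the critical HLS exponent between $\ell^{\tilde p'}_\gamma$ and $\ell^{p}_\gamma$, and the strict condition $\tfrac{1}{p}+\tfrac{1}{\tilde p}<1$ is precisely the HLS convergence threshold. I expect the main obstacle to be the off-diagonal block estimate ($|\gamma-\mu|\gtrsim 1$) with the correct $(d-n)$-rate, since $e^{-itH_y}$ is not globally dispersive and the factorization must be handled carefully to avoid losing powers in $|\gamma-\mu|$; a clean workaround is to derive it by a $TT^{\ast}$/Christ--Kiselev argument built on top of the homogeneous Strichartz bound of Lemma~\ref{Ls1}, which already encodes the required $(d-n)$-dimensional behavior at the discrete $\ell^p_\gamma$ level, thereby reducing the matter to an interpolation between the admissible endpoint (Lemma~\ref{Ls1}) and the trivial $L^\infty_t$ bound.
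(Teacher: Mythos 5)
Your overall strategy---decompose time into the intervals $I_\gamma$, prove a block estimate with algebraic decay in $|\gamma-\mu|$, and sum by discrete Hardy--Littlewood--Sobolev---is the natural way to carry out the ``direct adaptation of Foschi'' that the paper invokes (the paper itself gives only a pointer to Foschi's Theorem~1.4, noting that equal space indices $r=\tilde r$ simplify matters). You have also correctly identified the role of the three conditions on $(p,\tilde p,r)$: the scaling relation $\tfrac{2}{p}+\tfrac{2}{\tilde p}=(d-n)\bigl(1-\tfrac{2}{r}\bigr)$ matches the discrete HLS kernel decay, $\tfrac{1}{p}+\tfrac{1}{\tilde p}<1$ is the HLS convergence threshold, and the two ``acceptable pair'' inequalities, combined with the scaling, force $p<\infty$ and $\tilde p<\infty$, i.e.\ they keep the discrete HLS off its endpoints.

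That reading, however, exposes a gap in your treatment of the diagonal block $\gamma=\mu$. You assert that ``the acceptable-pair inequalities in the statement are precisely what this step requires,'' but those inequalities constrain $(p,\tilde p)$ against $d-n$; they say nothing whatsoever about $(q,\tilde q)$. The local inhomogeneous estimate on a unit time interval, built from the $d$-dimensional local dispersive bound, needs its own Foschi-type constraints tying $q$, $\tilde q$, $r$ to the \emph{full} dimension $d$ (roughly $\tfrac{1}{q}+\tfrac{1}{\tilde q}\ge d\bigl(\tfrac{1}{2}-\tfrac{1}{r}\bigr)$ together with $d$-dimensional acceptability of $(q,r)$ and $(\tilde q,r)$). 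The lemma as stated leaves these tacit---it only demands $q,\tilde q\in[1,\infty]$---but they are not implied by the $(p,\tilde p,r)$ conditions, and without them the diagonal block estimate is simply false (try $q=\tilde q=\infty$). They do hold for the specific exponents of Lemma~\ref{lem:indices}, where one checks $\tfrac{1}{q}+\tfrac{1}{\tilde q}=d\bigl(\tfrac{1}{2}-\tfrac{1}{r}\bigr)$ exactly; your proposal never states or uses this relation. Conversely, the off-diagonal blocks, which you single out as the main obstacle, are in fact the less delicate part: for $|\gamma-\mu|\ge 2$ the $L^{r'}\to L^r$ bound factors as $|\gamma-\mu|^{-(d-n)(\frac{1}{2}-\frac{1}{r})}\cdot|\sin 2(t-s)|^{-n(\frac{1}{2}-\frac{1}{r})}$, and the periodic singular factor has strictly milder local singularities than the $|t-s|^{-d(\frac{1}{2}-\frac{1}{r})}$ governing the diagonal; it is absorbed by exactly the same $(q,\tilde q,r)$ integrability that you left unexamined.
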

\begin{proof}
The proof of the inhomogeneous Strichartz estimates for non-admissible
pairs is a  direct adaptation of the proof of Theorem~1.4  in
\cite{Foschi2005}. We emphasize that we consider
the same Lebesgue index in space on the left and right hand sides in
the above inequality, which makes the adaptation of \cite[Theorem
1.4]{Foschi2005}  easier.
\end{proof}
We will also need a weaker dispersive property:
\begin{lemma}\label{lem:dispLp}
  Let $1\le n\le d-1$ and $2<r<\tfrac{2d}{d-2}$. For any
$\varphi\in B_1$,
\begin{equation*}
\|e^{-itH}\varphi\|_{L^r(\R^d)}\Tend t {\pm \infty} 0.      
\end{equation*}
\end{lemma}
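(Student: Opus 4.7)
The plan is to combine a uniform boundedness property with a density argument. Since $2<r<\tfrac{2d}{d-2}$, the Sobolev embedding $H^{1}(\R^{d})\hookrightarrow L^{r}(\R^{d})$, combined with the trivial inequality $\|u\|_{H^{1}}\le \|u\|_{B_{1}}$ and the fact that $e^{-itH}$ is unitary on $B_{1}$, yields the $t$-uniform control
\[
\|e^{-itH}\varphi\|_{L^{r}(\R^{d})}\lesssim \|\varphi\|_{B_{1}}.
\]
By an $\eps/3$ argument, it is enough to prove the decay for $\varphi$ ranging in a dense subset $\mathcal D\subset B_{1}$, and conclude by triangle inequality.

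For $\mathcal D$, I would take finite linear combinations of tensor products $h_{k}(y)\,v(z)$, where $(h_{k})_{k\in\N^{n}}$ is the Hermite orthonormal basis of $L^{2}(\R^{n})$ diagonalizing the harmonic oscillator $-\Delta_{y}+|y|^{2}$ with eigenvalues $\mu_{k}=n+2|k|$, and $v\in \Sch(\R^{d-n})$. Since $-\Delta_{y}+|y|^{2}$ has compact resolvent on $L^{2}(\R^{n})$ with $\|h_{k}\|_{B_{1}(\R^{n})}^{2}\sim \mu_{k}$, truncating in the $y$-variable the Hermite expansion of an element of $\Sch(\R^{d})$ should produce an approximation in $B_{1}(\R^{d})$, which gives density.

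On such a $\varphi=\sum_{|k|\le K}h_{k}(y)v_{k}(z)\in \mathcal D$, the linear flow separates:
\[
e^{-itH}\varphi(y,z)=\sum_{|k|\le K} e^{-it\mu_{k}}\,h_{k}(y)\,\bigl(e^{it\Delta_{z}}v_{k}\bigr)(z).
\]
Applying the product identity $\|f(y)g(z)\|_{L^{r}(\R^{d})}=\|f\|_{L^{r}(\R^{n})}\|g\|_{L^{r}(\R^{d-n})}$ together with the triangle inequality reduces the problem to the classical free Schr\"odinger dispersive estimate in $d-n$ space dimensions: for each $k$,
\[
\|e^{it\Delta_{z}}v_{k}\|_{L^{r}(\R^{d-n})}\lesssim |t|^{-(d-n)\bigl(\tfrac{1}{2}-\tfrac{1}{r}\bigr)}\,\|v_{k}\|_{L^{r'}(\R^{d-n})}\Tend t {\pm\infty} 0,
\]
since $r>2$ and $v_{k}\in \Sch(\R^{d-n})\subset L^{r'}(\R^{d-n})$. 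Summing the finitely many terms then yields the desired decay on $\mathcal D$.

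The main (rather minor) obstacle I anticipate is the density of $\mathcal D$ in $B_{1}$, which is a spectral-theoretic statement about the harmonic oscillator that must be coupled to the $z$-variable. Once this is granted, the argument neatly reduces the non-dispersion of $e^{-itH}$ to the genuine dispersion of the free Schr\"odinger group in the $z$-direction, consistent with the general picture behind \eqref{eq:vector-norm} that only the $z$-direction is truly dispersive.
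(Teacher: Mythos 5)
Your argument is correct, and it is genuinely different from the paper's. The paper also reduces to a density argument against the uniform Sobolev bound $\|e^{-itH}\varphi\|_{L^r}\lesssim\|\varphi\|_{B_1}$, but for the decay on a dense subset it takes $\varphi\in\Sigma$ (the conformal space) and applies Gagliardo--Nirenberg in the form
\[
\|e^{-itH}\varphi\|_{L^r}\lesssim |t|^{-\delta}\,\|e^{-itH}\varphi\|_{L^2}^{1-\delta}\,\|(\nabla_y,J_z(t))e^{-itH}\varphi\|_{L^2}^{\delta},\quad \delta=(d-n)\Bigl(\tfrac12-\tfrac1r\Bigr),
\]
then uses the positivity of the harmonic potential to control $\nabla_y$ by $(-\Delta_y+|y|^2)^{1/2}$ and the conjugation identities $A_j(t)=e^{-itH}(\cdot)e^{itH}$, $J_z(t)=e^{-itH}ze^{itH}$ to get $\|e^{-itH}\varphi\|_{L^r}\lesssim |t|^{-\delta}\|\varphi\|_\Sigma$. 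Your route replaces this by a Hermite tensor decomposition in $y$, which makes the linear flow separate exactly and reduces the decay to the textbook $L^{r'}\to L^r$ dispersive estimate for $e^{it\Delta}$ on $\R^{d-n}$. Your approach is more elementary (no vector fields, no Gagliardo--Nirenberg) and it makes the underlying mechanism --- only the $z$-direction disperses --- completely transparent. The paper's approach avoids spectral decomposition and gives a clean quantitative rate uniformly over $\Sigma$, whereas you only get the rate profile-by-profile; both suffice since the final statement is qualitative. The one point you flagged yourself --- density of finite sums $\sum h_k(y)v_k(z)$ with $v_k\in\Sch(\R^{d-n})$ in $B_1$ --- does go through: for $f\in\Sch(\R^d)$ one has $\|H^{1/2}f\|_{L^2}^2=\sum_k\|(\mu_k-\Delta_z)^{1/2}v_k\|_{L^2(\R^{d-n})}^2<\infty$ (with $v_k=\langle f(\cdot,z),h_k\rangle$ Schwartz in $z$), so the tail of the partial Hermite expansion tends to $0$ in $B_1$, and $\Sch(\R^d)$ is itself dense in $B_1$. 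So there is no gap once that step is written out.
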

This result is actually valid more generally if the harmonic potential
$|y|^2$ is replaced by a
potential bounded from below, as shown by the proof. 
\begin{proof}
  When $\varphi$ belongs to the conformal space, $\varphi\in \Sigma$,
  we consider the Galilean operator in $z$ (see e.g. \cite{GV79Scatt,CB}),
  \begin{equation*}
    J_z(t) = z+2it\nabla_z= 2it \, e^{i|z|^2/(4t)}\nabla_z\(\cdot \,  e^{-i|z|^2/(4t)}\).
  \end{equation*}
  Gagliardo-Nirenberg inequality yields
  \begin{equation*}
    \|e^{-itH}\varphi\|_{L^r(\R^d)}\lesssim
    |t|^{-\delta}
    \|e^{-itH}\varphi\|_{L^2(\R^d)}^{1-\delta}\|(\nabla_y,J_z(t))e^{-itH}\varphi\|_{L^2(\R^d)}^\delta,
  \end{equation*}
  where $   \delta = (d-n)\(\tfrac{1}{2}-\tfrac{1}{r}\)$. Since the
  harmonic potential is non-negative,
  \begin{equation*}
    \|(\nabla_y,J_z(t))e^{-itH}\varphi\|_{L^2(\R^d)}\lesssim
    \|(\(-\Delta_y+|y|^2\)^{1/2},J_z(t))e^{-itH}\varphi\|_{L^2(\R^d)}, 
  \end{equation*}
  and since the operator $\(-\Delta_y+|y|^2\)^{1/2}$ commutes
  with $e^{-itH}$, which is unitary on $L^2(\R^d)$, and
  \[ J_{z}(t) = e^{it\Delta_z} z e^{-it\Delta_z}= e^{-itH} z e^{itH},\]
  we infer
  \begin{equation*}
     \|e^{-itH}\varphi\|_{L^r(\R^d)}\lesssim
    |t|^{-\delta}
    \|\varphi\|_{\Sigma}.
  \end{equation*}
  In view of Sobolev embedding and the fact that $e^{-itH}$ preserves
  the $B_1$-norm,
  \begin{equation*}
    \|e^{-itH}\varphi\|_{L^r(\R^d)}\lesssim
    \|e^{-itH}\varphi\|_{H^1(\R^d)}\lesssim
    \|e^{-itH}\varphi\|_{B_1}=\|\varphi\|_{B_1}, 
  \end{equation*}
  the result follows by a density argument.
\end{proof}
\subsection{Fixing Lebesgue indices for the scattering analysis}
\label{sec:indices}

From now on, we fix the exponents $\tilde{{q}}$, $\tilde{{p}}$, $p$,
$q$, $p_{0}$, $q_{0}$, $r$ as follows.

\begin{lemma}\label{lem:indices}
  Let $\tfrac{2}{d-n}\le \si<\frac{2}{d-2}$, and set
  \begin{align*}
&\tilde{{q}}=\frac{4\sigma(\sigma+1)}{2d\sigma^{2}+\sigma(d-2)-2},\quad
                 \tilde{{p}}=\frac{4\sigma(\sigma+1)}{2d\sigma^{2}+\sigma(d-2-n)-2(n\sigma^{2}+1)},\\ 
& p=\frac{4\sigma(\sigma+1)}{2\sigma+2-(d-n)\sigma},\quad q=\frac{4\sigma(\sigma+1)}{2\sigma+2-d\sigma},\quad r=2\sigma+2,\\
& p_{0}=\frac{4\sigma+4}{(d-n)\sigma},\quad q_{0}=\frac{4\sigma+4}{d\sigma}.
\end{align*}
Then the triplet $(p_{0}, q_{0}, r)$ satisfies the condition
\eqref{paa1}. Moreover, the triplets $(p, q, r)$ and $(\tilde{p},
\tilde{q}, r)$ satisfy the conditions in Lemma~\ref{Ls22}. 
\end{lemma}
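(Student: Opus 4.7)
The plan is to observe that the claimed admissibility properties all reduce to explicit algebraic identities or strict inequalities in $\sigma$, $d$, $n$, which follow by direct substitution. A useful shortcut, used repeatedly, is that with $r=2\sigma+2$,
\[
\tfrac{1}{2}-\tfrac{1}{r}=\tfrac{\sigma}{2(\sigma+1)},\qquad 1-\tfrac{2}{r}=\tfrac{\sigma}{\sigma+1}.
\]

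For $(p_0,q_0,r)$, plugging this into \eqref{paa1} gives $d\bigl(\tfrac{1}{2}-\tfrac{1}{r}\bigr)=\tfrac{d\sigma}{2(\sigma+1)}=\tfrac{2}{q_0}$ and $(d-n)\bigl(\tfrac{1}{2}-\tfrac{1}{r}\bigr)=\tfrac{(d-n)\sigma}{2(\sigma+1)}=\tfrac{2}{p_0}$, so the admissibility relation holds.

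For the pair $(p,\tilde p)$, I would add the two fractions $\tfrac{2}{p}+\tfrac{2}{\tilde p}$ over the common denominator $2\sigma(\sigma+1)$: expanding the numerator, the terms of degree one in $\sigma$ and the constants cancel, leaving $2\sigma^{2}(d-n)$, hence
\[
\frac{2}{p}+\frac{2}{\tilde p}=\frac{\sigma(d-n)}{\sigma+1}=(d-n)\left(1-\tfrac{2}{r}\right),
\]
which is the Strichartz balance required by Lemma~\ref{Ls22}. Dividing by $2$ yields $\tfrac{1}{p}+\tfrac{1}{\tilde p}=\tfrac{\sigma(d-n)}{2(\sigma+1)}<1\iff\sigma(d-n-2)<2$: this is automatic for $d-n\le 2$, while for $d-n\ge 3$ it follows from $\sigma<\tfrac{2}{d-2}<\tfrac{2}{d-n-2}$ (using $n\ge 1$).

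Finally, the acceptability condition $\tfrac{1}{p}+\tfrac{d-n}{r}<\tfrac{d-n}{2}$ rearranges to $\tfrac{1}{p}<\tfrac{(d-n)\sigma}{2(\sigma+1)}$; substituting the explicit value of $p$ and clearing denominators one obtains $(d-n)\sigma(2\sigma+1)>2\sigma+2$, which follows from the hypothesis $(d-n)\sigma\ge 2$, since then $(d-n)\sigma(2\sigma+1)\ge 2(2\sigma+1)=4\sigma+2>2\sigma+2$. The analogous manipulation for $\tilde p$ reduces, after cancellation of a common term $2\sigma^{2}(d-n)$, back to $\sigma(d-n-2)<2$, already proved above. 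The inclusions $p,\tilde p,q,\tilde q\in[1,\infty]$ are transparent from the explicit formulas under $\tfrac{2}{d-n}\le\sigma<\tfrac{2}{d-2}$. The check is purely computational; the only care needed is the bookkeeping of the cancellations and the case split $d-n\le 2$ vs.\ $d-n\ge 3$ in the last strict inequality.
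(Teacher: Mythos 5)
Your computations for \eqref{paa1}, for the scaling identity $\tfrac{2}{p}+\tfrac{2}{\tilde p}=(d-n)\bigl(1-\tfrac{2}{r}\bigr)$, for $\tfrac{1}{p}+\tfrac{1}{\tilde p}<1$, and for both acceptability inequalities are correct and follow the same direct-substitution strategy as the paper. Your shortcut in the $p$-acceptability check --- reducing to $(d-n)\sigma(2\sigma+1)>2\sigma+2$ and invoking $(d-n)\sigma\ge 2$ --- is a clean variant of the paper's estimate $\tfrac{1}{2\sigma}+\tfrac{d-n}{4\sigma+4}<\tfrac{d-n}{2}$, and the reduction of the $\tilde p$-acceptability to $\sigma(d-n-2)<2$ matches.

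However, the closing assertion that ``the inclusions $p,\tilde p,q,\tilde q\in[1,\infty]$ are transparent from the explicit formulas'' brushes past the one genuinely nontrivial check, namely $\tilde q\in[1,\infty]$. Already the positivity of its denominator $2d\sigma^{2}+\sigma(d-2)-2$ needs an argument (it holds because the quadratic is positive at $\sigma=2/d$ and increasing for $\sigma>0$, and $\sigma\ge\tfrac{2}{d-n}>\tfrac{2}{d}$), and $\tilde q\ge 1$ is equivalent to $(4-2d)\sigma^{2}+(6-d)\sigma+2\ge 0$, a downward parabola for $d\ge 3$ whose zeros are exactly $-\tfrac12$ and $\tfrac{2}{d-2}$; that the allowed range $\sigma<\tfrac{2}{d-2}$ sits precisely inside the nonnegativity window is a designed feature of the indices, not something one can wave away. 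The paper verifies this by using $q=(2\sigma+1)\tilde q'$, so that $\tilde q\in[1,\infty]$ iff $q\ge 2\sigma+1$, which reduces to $2d\sigma^{2}+(d-2)\sigma-2\ge 0$, i.e.\ $\sigma\ge\sigma_{c}(d)$ with $\sigma_{c}(d)<\tfrac{2}{d}\le\sigma$. One of these computations must actually be carried out; as written your argument leaves a hole exactly at the step the paper thought worth spelling out.
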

\begin{proof}
  That the triplet $(p_{0}, q_{0}, r)$ satisfies the condition
  \eqref{paa1} is readily checked.
  \smallbreak
  We note that $\tilde q\in [1,\infty]$ iff $\tilde q'\in [1,\infty]$. Thus we
must check that $q\ge 2\sigma+1$. In turn this inequality follows
provided that $4\sigma(\sigma+1)\ge (2\sigma+1)(2\sigma+2-d\sigma)$
and it is equivalent to $\sigma\ge
\sigma_c(d)={2-d+\sqrt{d^2-12d+4}}/{4d}$, a threshold which is
classical in scattering theory for NLS (see e.g. \cite{CB}). Since $\sigma_c(d)<2/d<2/(d-n)$, the condition is fulfilled. Now we focus on the exponent $\tilde p$. We compute
 \begin{equation*}
   \frac{1}{\tilde p} = (d-n)\frac{2\sigma+1}{4\sigma+4}-\frac{1}{2\sigma},
 \end{equation*}
 and thus
 \begin{equation*}
   \frac{2}{p}+\frac{2}{\tilde p} =
   (d-n)\frac{2\sigma}{2\sigma+2}=(d-n)\left(1-\frac{1}{r}-\frac{1}{r}\right). 
 \end{equation*}
We also have, from the above formula,
 \begin{equation*}
    \frac{1}{p}+\frac{1}{\tilde p} =
    (d-n)\frac{\sigma}{2\sigma+2}<1,\quad\text{since}\quad
    \sigma<\frac{2}{d-2}<\frac{2}{(d-n-2)_+}. 
  \end{equation*}
  All that remains is to check that we have acceptable pairs:
\begin{equation*}
    \frac{1}{p}+\frac{d-n}{r}<\frac{d-n}{2}\Longleftrightarrow
 \frac{1}{2\sigma}+\frac{d-n}{4\sigma+4}<\frac{d-n}{2}. 
\end{equation*}
Since $\sigma\geq 2/(d-n)$, we infer that
\begin{equation*}
\frac{1}{2\sigma}+\frac{d-n}{4\sigma+4}\leq \frac{d-n}{4}+\frac{d-n}{4\sigma+4},
\end{equation*}
and the above inequality is satisfied as soon as
\begin{equation*}
\frac{d-n}{4\sigma+4}< \frac{d-n}{4},
\end{equation*}
which is trivially the case.
Last, we check
\begin{equation*}
  \frac{1}{\tilde p}+\frac{d-n}{r}<\frac{d-n}{2}\Longleftrightarrow
  \frac{d-n}{4\sigma+4}<\frac{1}{2\sigma},
\end{equation*}
which is again the case since
\begin{equation*}
   \sigma<\frac{2}{d-2}<\frac{2}{(d-n-2)_+}. 
 \end{equation*}
\end{proof}

We note that $(q_0,r)$ corresponds to the admissible pair
appearing in Lemma~\ref{lem:LWP}.

\subsection{Scattering}
\label{sec:scattering1}
The interest of the specific choice for $(p,q,r)$ appears in the
following lemma.

\begin{lemma}\label{Ls2}
Let $u_{0}\in B_{1}$ and $u$ be the corresponding solution of Cauchy
problem  \eqref{GP} with $u(0)=u_{0}$. If $u$ is global, $u\in C(\R;B_1)\cap
  L^{\frac{4\si+4}{d\si}}_{\rm loc}(\R;L^{2\si+2}(\R^d))$, and satisfies
\[  \| u\|_{\ell_{\gamma}^{p}L^{q}L^{r}}<\infty,\]
then the solution $u$ scatters in ${B}_{1}$ as $t\rightarrow\pm\infty$. 
\end{lemma}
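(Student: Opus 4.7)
The plan is to work in the Duhamel formulation
\begin{equation*}
e^{itH}u(t) = u_0 - i\lambda \int_0^t e^{isH}\bigl(|u|^{2\sigma}u\bigr)(s)\,ds,
\end{equation*}
and to prove that $t\mapsto e^{itH}u(t)$ is Cauchy in $B_1$ as $t\to +\infty$; the case $t\to -\infty$ is symmetric, and the two limits serve as $\psi^{\pm}$. By \eqref{eq:vector-norm}, controlling the $B_1$-norm of $e^{itH}u(t)$ amounts to controlling $A(t)u(t)$ in $L^2(\R^d)$ for each $A\in\{\mathrm{Id},A_1,A_2,\nabla_z\}$. Since each such $A$ commutes with $i\d_t-H$, Duhamel holds verbatim for $A(t)u(t)$, and $|A(s)(|u|^{2\sigma}u)|\lesssim |u|^{2\sigma}|A(s)u|$, so the confined variable $y$ can be handled on the same footing as $\nabla_z$ in the Strichartz machinery.

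The first step is to promote $\|u\|_{\ell^{p}_{\gamma}L^{q}L^{r}(\R)}<\infty$ into the global Strichartz bound $\sum_{A}\|Au\|_{\ell^{p_0}_{\gamma}L^{q_0}L^{r}(\R)}<\infty$. Since $p<\infty$, a covering argument partitions $\R$ into finitely many intervals $J_1,\dots,J_N$ with $\|u\|_{\ell^{p}_{\gamma}L^{q}L^{r}(J_k)}\le \eta$ for a small $\eta>0$ to be fixed. On each $J_k$ I apply Lemma~\ref{Ls1} to $Au$ with the admissible triple $(p_0,q_0,r)$ on both sides, and use H\"older in space, in time and in the $\gamma$-sum to estimate the nonlinearity:
\begin{equation*}
\||u|^{2\sigma}\,Au\|_{\ell^{p_0'}_{\gamma}L^{q_0'}L^{r'}(J_k)}\lesssim \|u\|_{\ell^{p}_{\gamma}L^{q}L^{r}(J_k)}^{2\sigma}\,\|Au\|_{\ell^{p_0}_{\gamma}L^{q_0}L^{r}(J_k)},
\end{equation*}
the exponent identities $\frac{1}{r'}=\frac{2\sigma+1}{r}$, $\frac{1}{q_0'}=\frac{2\sigma}{q}+\frac{1}{q_0}$ and $\frac{1}{p_0'}=\frac{2\sigma}{p}+\frac{1}{p_0}$ being direct consequences of Lemma~\ref{lem:indices}. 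Choosing $\eta$ small enough absorbs the nonlinear term; iterating over the finitely many $J_k$'s, with the $L^\infty_tL^2_x$-endpoint of Strichartz propagating $\|A(t_k)u(t_k)\|_{L^2}$ from one window to the next, yields the desired global bound.

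Scattering then follows from a standard Cauchy argument. For $t>t'>0$,
\begin{equation*}
e^{itH}u(t)-e^{it'H}u(t')=-i\lambda\int_{t'}^{t}e^{isH}\bigl(|u|^{2\sigma}u\bigr)(s)\,ds,
\end{equation*}
and each $A$-component of the difference is estimated in $L^2(\R^d)$ by the dual Strichartz bound on the tail combined with the same H\"older inequality, giving
\begin{equation*}
\|e^{itH}u(t)-e^{it'H}u(t')\|_{B_1}\lesssim \|u\|_{\ell^{p}_{\gamma}L^{q}L^{r}([t',t])}^{2\sigma}\sum_{A}\|Au\|_{\ell^{p_0}_{\gamma}L^{q_0}L^{r}(\R)}.
\end{equation*}
The first factor tends to $0$ as $t,t'\to +\infty$ by global finiteness of $\|u\|_{\ell^{p}_{\gamma}L^{q}L^{r}(\R)}$, so $e^{itH}u(t)$ is Cauchy in $B_1$; its limit $\psi^+$ furnishes the scattering statement.

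The main obstacle is the bookkeeping of exponents in the H\"older step: the identity $\frac{1}{p_0'}=\frac{2\sigma}{p}+\frac{1}{p_0}$ (and its counterpart in the $q$-variable) must hold exactly, otherwise no absorbing argument could close. This matching is precisely the raison d'\^etre of the algebraic choices in Lemma~\ref{lem:indices}, so the verification reduces to plugging the explicit formulas for $p,q,p_0,q_0,r$ into the identities above. A secondary point to verify is the propagation of $\|A(t_k)u(t_k)\|_{L^2}$ across consecutive windows, which is standard once one uses the $L^\infty_tL^2_x$-endpoint in Lemma~\ref{Ls1}.
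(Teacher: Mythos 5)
Your proof is correct and follows essentially the same route as the paper: propagate the $\ell^p_\gamma L^q L^r$ control of $u$ into global $\ell^{p_0}_\gamma L^{q_0}L^r$ Strichartz bounds for $Au$ (for $A\in\{\mathrm{Id},A_1,A_2,\nabla_z\}$) via the same H\"older/Strichartz bookkeeping from Lemma~\ref{lem:indices}, then close with the dual Strichartz estimate on the tail to show $e^{itH}u(t)$ is Cauchy in $B_1$, using \eqref{eq:vector-norm}. The only cosmetic difference is that the paper runs the bootstrap only on the far tail $\gamma\ge\gamma_0$ (and invokes the local theory for the bounded part), whereas you subdivide all of $\R$ into finitely many small-norm windows and iterate; both are standard and yield the same bound.
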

\begin{proof}
We first show that $\|
A u\|_{\ell_{\gamma}^{p_{0}}L^{q_{0}}L^{r}}<\infty$ for
all $A\in \{ {\rm Id},A_1,A_2,\nabla_z\}$. As
$A u\in L^{q_0}_{\rm loc}(\R;L^r)$, we need to show that for
$\gamma_0\gg 1$, $\|
A u\|_{\ell_{\gamma\ge
    \gamma_0}^{p_{0}}L^{q_{0}}(I_{\gamma},L^{r})}<\infty$, the case of
negative times being similar.
We consider the integral equation
\[
  u(t)=e^{-i(t-\pi\gamma_{0})H}u(\pi\gamma_{0})-i\lambda\int_{\pi\gamma_{0}}^{t}e^{-i(t-s)H}(|u|^{2\sigma}u)(s)ds.\]
Notice the algebraic identities,
\begin{equation*}
  \frac{1}{p_0'}= \frac{1}{p_0}+\frac{2\si}{p},\quad
                  \frac{1}{q_0'} = \frac{1}{q_0}+\frac{2\si}{q}. 
 \end{equation*}
For $\gamma_1>\gamma_0>0$, 
  Strichartz estimate (Lemma~\ref{Ls1}) and H\"older inequality yield
\begin{align*}
   \|A u\|_{\ell_{\gamma_0\le \gamma\le
   \gamma_1}^{p_0}L^{q_0}L^r}&\lesssim
   \|A e^{-itH}u_0\|_{\ell_{\gamma_0\le \gamma\le
   \gamma_1}^{p_0}L^{q_0}L^r}+ \||u|^{2\si}Au\|_{\ell_{\gamma_0\le \gamma\le
   \gamma_1}^{p_0'}L^{q_0'}L^{r'}}\\
   &\lesssim
   \|A u_0\|_{L^2}+ \|  u\|^{2\si}_{\ell_{\gamma_0\le \gamma\le
   \gamma_1}^{p}L^{q}L^{r}}\|  A u\|_{\ell_{\gamma_0\le \gamma\le
   \gamma_1}^{p_0}L^{q_0}L^{r}}.
 \end{align*}
For  $\gamma_0\gg 1$ so that $\|
u\|_{\ell^{p}_{\gamma\ge \gamma_0}L^{q}L^{r}}$ is sufficiently small,
a bootstrap argument yields
\begin{equation*}
  \|A u\|_{\ell^{p_0}_{\gamma_0\le \gamma\le
   \gamma_1}L^{q_0}L^r}\lesssim
   \|A u_0\|_{L^2}\lesssim \|u_0\|_{B_1},
 \end{equation*}
 uniformly in $\gamma_1>\gamma_0$, hence $Au\in \ell^{p_0}_\gamma
 L^{q_0}L^r$.
 \smallbreak

 Using Strichartz estimates again, we have, for $t_2>t_1>0$, 
\begin{align*} 
\|A (t_2)u(t_2)-A(t_1)u(t_1)\|_{L^2}&= \left\|
  \int^{t_{2}}_{t_{1}}e^{isH}A(s)(|u|^{2\sigma}u)(s)ds\right\|_{L^2}\\
  &
  \lesssim \left\|A\(|u|^{2\si}u\)\right\|_{\ell^{p_0'}_{\gamma\gtrsim
    t_1}L^{q_0'}L^{r'}}\\
  &\lesssim \|u \|^{2\si}_{\ell^{p}_{\gamma\gtrsim
    t_1}L^{q}L^{r}}\|Au\|_{\ell^{p_0}_{\gamma\gtrsim
    t_1}L^{q_0}L^{r}}\Tend {t_1}\infty 0,
\end{align*}
and so, in view of \eqref{eq:vector-norm}, $e^{itH}u(t)$ converges strongly in $B_1$ as $t\to \infty$. 
\end{proof}

With Duhamel's formula in mind, we show that the homogeneous part
always belong to the scattering space considered in Lemma~\ref{Ls2}.

\begin{lemma}\label{axc}  
Let $\psi\in B_{1}$. Then
\begin{equation}\label{Imi}
\|e^{-itH}\psi\|_{\ell_{\gamma}^{{p}}L^{q}L^{r}} \lesssim  \|\psi\|_{B_{1}}.
\end{equation}
\end{lemma}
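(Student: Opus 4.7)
The triple $(p,q,r)$ of Lemma~\ref{lem:indices} is not admissible for Lemma~\ref{Ls1}: one checks $p\ge p_0$ and $q\ge q_0$, with strict inequalities once $\sigma>2/(d-n)$. The plan is to close the gap by paying the one derivative of regularity encoded in $B_1$ compared with $L^2$. The key structural observation is that $\tfrac{2}{p}+\tfrac{d-n}{r}=\tfrac{1}{\sigma}$, i.e.\ $(p,r)$ is a scale-critical pair in $\R^{d-n}$ for a $2\sigma$-nonlinearity, so the ``missing'' part of the Strichartz estimate is exactly the Sobolev regularity $\tfrac{d-n}{2}-\tfrac{1}{\sigma}\in[0,1]$ in the dispersive direction~$z$.

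I would first apply Lemma~\ref{Ls1} at the admissible triple $(p_0,q_0,r)$ to each of $\psi$, $A_1(0)\psi=-i\nabla_y\psi$, $A_2(0)\psi=-y\psi$ and $\nabla_z\psi$. Since $A_1,A_2,\nabla_z$ all commute with $i\partial_t-H$, so that $A(t)e^{-itH}\psi=e^{-itH}A(0)\psi$, and since $\|A(0)\psi\|_{L^2}\lesssim\|\psi\|_{B_1}$, I obtain
\begin{equation*}
\|Ae^{-itH}\psi\|_{\ell^{p_0}_\gamma L^{q_0}L^r}\lesssim\|\psi\|_{B_1}\quad\text{for}\quad A\in\{\mathrm{Id},A_1,A_2,\nabla_z\},
\end{equation*}
which in view of \eqref{eq:vector-norm} controls the full ``$B_1$-gradient'' of $e^{-itH}\psi$ in the admissible Strichartz norm. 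In parallel, the isometry of $e^{-itH}$ on $B_1$ together with the Sobolev embedding $B_1\hookrightarrow L^r$ (valid since $r=2\sigma+2<\tfrac{2d}{d-2}$) delivers the uniform bound $\|e^{-itH}\psi\|_{\ell^{\infty}_\gamma L^\infty L^r}\lesssim\|\psi\|_{B_1}$.

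To reach the non-admissible target $(p,q,r)$ I would then combine these two families of estimates with a Sobolev embedding of the form $W^{s,r_*}\hookrightarrow L^r$ (with $\tfrac{1}{r_*}-\tfrac{1}{r}=\tfrac{s}{d}$, $s\in(0,1]$), interpolating the gradient Strichartz of the first step against the plain one via the pointwise-in-$t$ identity $W^{s,r_*}=[L^{r_*},W^{1,r_*}]_s$. The algebraic relations $\tfrac{1}{q_0}-\tfrac{1}{q}=\tfrac{d\sigma-2}{4\sigma}$ and $\tfrac{1}{p_0}-\tfrac{1}{p}=\tfrac{(d-n)\sigma-2}{4\sigma}$, which follow directly from the formulas of Lemma~\ref{lem:indices}, then allow matching of both the $\gamma$-index $p$ and the interior time-index $q$ to the target. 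The main obstacle is precisely this simultaneous matching: because $p/q<1$ while $p_0/q_0=d/(d-n)>1$, no one-parameter interpolation along the admissible line works; the resolution is to interpolate diagonally, allocating the single derivative of $B_1$-regularity asymmetrically between the confining variable $y$ (where it is spent via the harmonic-Sobolev embedding $B_1^y\hookrightarrow L^{r_y}_y$) and the dispersive variable $z$ (which contributes the $\ell^p$-decay through Lemma~\ref{Ls1}), in exactly the proportions dictated by the explicit $(p_0,q_0,p,q,r)$ of Lemma~\ref{lem:indices}.
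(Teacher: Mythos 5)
Your setup is clean and the diagnosis is right, but the proof as written does not close, and the difficulty you correctly identify is in fact the reason the paper takes a different route. You observe that applying Lemma~\ref{Ls1} at the admissible triplet $(p_0,q_0,r)$ to $A(0)\psi$ for $A\in\{\mathrm{Id},A_1,A_2,\nabla_z\}$ yields $\|Ae^{-itH}\psi\|_{\ell^{p_0}_\gamma L^{q_0}L^r}\lesssim\|\psi\|_{B_1}$, and that $\|e^{-itH}\psi\|_{\ell^\infty_\gamma L^\infty L^r}\lesssim\|\psi\|_{B_1}$. Both are true. But interpolating these two families only produces $\ell^{p_\theta}_\gamma L^{q_\theta}L^r$ with $1/p_\theta=\theta/p_0$, $1/q_\theta=\theta/q_0$, so the ratio $p_\theta/q_\theta$ stays pinned at $p_0/q_0=d/(d-n)>1$, while the target satisfies $p/q<1$. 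You correctly note this obstruction, but the stated remedy — a ``diagonal interpolation'' splitting $B_1$-regularity between $y$ and $z$ in suitable proportions — is not a proof; no concrete interpolation scheme is produced that lands on $(p,q,r)$. In particular the chain $\|u\|_{L^r}\lesssim\|u\|_{L^{r_*}}^{1-s}\|\nabla u\|_{L^{r_*}}^s$ followed by H\"older in time forces the $\nabla u$ factor into some $L^{q/s}L^{r_*}$ with a specific admissible relation between $q/s$ and $r_*$, and matching that simultaneously to the outer exponent $p$ brings back exactly the ratio obstruction you started with.

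The paper avoids space interpolation entirely and pays the extra regularity in the \emph{time} variable instead. Since each $I_\gamma$ is a bounded interval, the (local in time) Sobolev embedding $W^{s,q_0}(I_\gamma;L^r)\hookrightarrow L^q(I_\gamma;L^r)$ closes the gap $q>q_0$ with $s=\tfrac12\big(\tfrac{d}{2}-\tfrac1\sigma\big)\in(0,\tfrac12)$. The fractional time regularity is then turned into regularity of the initial datum via the generator, $\|\chi(\cdot-\gamma\pi)e^{-itH}\psi\|_{W^{s,q_0}_t L^r}\lesssim\|\chi(\cdot-\gamma\pi)e^{-itH}H^{s}\psi\|_{L^{q_0}_tL^r}$, and the homogeneous Strichartz estimate at the admissible $(p_0,q_0,r)$ applied to $H^s\psi$ gives $\|\psi\|_{B_{2s}}\lesssim\|\psi\|_{B_1}$ because $2s<1$. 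The $\ell^p$ versus $\ell^{p_0}$ mismatch is handled separately and trivially by $p\ge p_0$. This also corrects the bookkeeping in your first paragraph: the regularity the paper spends is $2s=\tfrac{d}{2}-\tfrac1\sigma$ of harmonic Sobolev regularity on the full $\R^d$ (through $B_{2s}$), not $\tfrac{d-n}{2}-\tfrac1\sigma$ in the $z$-direction alone. So while your initial observations (the vector-field Strichartz bounds, the non-admissibility of $(p,q,r)$, the ratio obstruction) are sound, the key idea — Sobolev embedding in time converted to $H^s$-regularity of $\psi$ — is missing, and without it or a concrete replacement the argument has a genuine gap.
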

\begin{proof}
  We recall some details of the proof of \cite[Theorem
  3.4]{AntoCarSilva2015}. 
Consider a partition of unity 
\[ \sum_{\gamma \in \mathbb{Z}} \chi(t-\pi \gamma)=1, \quad \forall {t \in \mathbb{R}}
   \qquad \mbox{with} \quad \mbox{supp}\chi \subset [-\pi, \pi].\]
 Lemma~\ref{Ls1} is actually proven by considering
 \begin{equation*}
   \|\psi\|^{p}_{\ell_{\gamma}^{p}L^{q}L^{r}} =\sum_{\gamma\in \mathbb{Z}} \|\chi(\cdot
   -\gamma\pi) \psi\|_{L^q(\mathbb{R};L^r(\mathbb{R}^d))}^p. 
 \end{equation*}
 By Sobolev embedding,
 \begin{equation*}
   \|\chi(\cdot
   -\gamma\pi) e^{-itH}\psi\|_{L^q(\mathbb{R};L^r(\mathbb{R}^d))}\lesssim \|\chi(\cdot
   -\gamma\pi) e^{-itH}\psi\|_{W^{s,k}(\mathbb{R};L^r(\mathbb{R}^d))} ,\quad \frac{1}{q}=\frac{1}{k}-s.
 \end{equation*}
We note the  relations
 \begin{equation*}
   \frac{2}{p_0} = (d-n)\(\frac{1}{2}-\frac{1}{r}\), \quad
   \frac{2}{p} = (d-n)\(\frac{1}{2}-\frac{1}{r}\)-
   \(\frac{d-n}{2}-\frac{1}{\si}\),
 \end{equation*}
hence  $p\ge p_0$ since $\si\ge \tfrac{2}{d-n}$. Therefore,
 \begin{equation}\label{auxn1}
   \| e^{-itH}\psi\|_{\ell_{\gamma}^pL^q L^r}\lesssim \|\chi(\cdot
   -\gamma\pi)e^{-itH}\psi\|_{\ell^{p_0}_\gamma W^{s,k}(\mathbb{R};L^r(\mathbb{R}^d))} .
 \end{equation}
 If we set $k=q_0$ (in order to recover our initial 
 triplet), we find
 \begin{equation*}
   \frac{1}{q}= \frac{1}{2\sigma}-\frac{d}{4\sigma+4} =
   \underbrace{\frac{d}{2}\left(\frac{1}{2} -\frac{1}{2\sigma+2}\right)}_{=1/q_0} -s,\quad
   \text{hence}\quad s:= \frac{1}{2}{\left(\frac{d}{2}-\frac{1}{\sigma}\right)}.
 \end{equation*}
 Using
 \begin{align*}
   \|\chi(\cdot
   -\gamma\pi) e^{-itH}\psi\|_{W^{s,q_0}(\mathbb{R};L^r(\mathbb{R}^d))}&\lesssim  \|H^s\chi(\cdot
   -\gamma\pi) e^{-itH}\psi\|_{L^{q_0}(\mathbb{R};L^r(\mathbb{R}^d))}\\
   &\lesssim \|\chi(\cdot
   -\gamma\pi) e^{-itH}H^s \psi\|_{L^{q_0}(\mathbb{R};L^r(\mathbb{R}^d))},
 \end{align*}
 the homogeneous Strichartz estimate yields
 \begin{equation*}
    \| e^{-itH}\psi\|_{\ell_{\gamma}^pL^q L^r}\lesssim \|\chi(\cdot
   -\gamma\pi) e^{-itH}H^{s} \psi\|_{\ell^{p_0}_\gamma
     L^{q_0}(\mathbb{R};L^r(\mathbb{R}^d))}\lesssim
   \|\psi\|_{B_{2s}}\lesssim \|\psi\|_{B_{1}},
 \end{equation*}
since $0<s<\tfrac{1}{2}$, as $\tfrac{2}{d}<\tfrac{2}{d-n}\le \si<\tfrac{2}{d-2}$.
\end{proof}

\section{Variational estimates}\label{S:1} 
From now on, we assume $\lambda=-1$.
\smallbreak

We define on $B_{1}$ the Nehari functional
\begin{equation*}
I(u)=\|\nabla_{x} u\|^{2}_{L^{2}}+\|yu\|^{2}_{L^{2}}+\|u\|^{2}_{L^{2}}-\|u\|^{2\sigma+2}_{L^{\sigma+2}}.
\end{equation*}
In this section we show that the set of ground states is not
empty. Moreover, we prove that  $I(u)$ and $P(u)$ have the
same sign under the condition $S(Q)<S(u)$, which
plays a vital role in the proof of Theorem \ref{Th1}.  Here $Q$ is a
ground state. 
To prove this, we introduce the scaling quantity $\varphi^{a,b}_{\lambda}$ by
\begin{equation}\label{Fm}
\varphi^{a,b}_{\lambda}(x)=e^{a \lambda}\varphi(y, e^{-b\lambda}z), \quad  x=(y,z)\in\mathbb{R}^{n}\times  \mathbb{R}^{d-n},
\end{equation}
where $(a,b)$ satisfies the following conditions
\begin{equation}\label{Cd1}
a>0, \quad b\leq 0, \quad 2a+b(d-n)\geq 0, \quad \sigma a+b> 0, \quad (a,b)\neq (0,0).
\end{equation}
A simple calculation shows that
\begin{align*}
&\|\nabla_{y} \varphi^{a,b}_{\lambda}\|^{2}_{L^{2}}=e^{\lambda(2a+b(d-n))}\|\nabla_{y} \varphi\|^{2}_{L^{2}},\quad\|\nabla_{z} \varphi^{a,b}_{\lambda}\|^{2}_{L^{2}}=e^{\lambda(2a+b(d-n-2))}\|\nabla_{z} \varphi\|^{2}_{L^{2}},
\\
&\| \varphi^{a,b}_{\lambda}\|^{2}_{L^{2}}=e^{\lambda(2a+b(d-n))}\|\varphi\|^{2}_{L^{2}},\quad\|\varphi^{a,b}_{\lambda}\|^{2\sigma+2}_{L^{2\sigma+2}}=e^{\lambda(a(2\sigma+2)+b(d-n))}\| \varphi\|^{2\sigma+2}_{L^{2\sigma+2}},\\
&\|y \varphi^{a,b}_{\lambda}\|^{2}_{L^{2}}=e^{\lambda(2a+b(d-n))}\|y\varphi\|^{2}_{L^{2}}.
\end{align*}
We define the functionals $J^{a,b}$ by
\begin{align*}
J^{a,b}(\varphi)&=\left.\partial_{\lambda}S(\varphi^{a,b}_{\lambda})\right|_{\lambda=0}\\
&=\frac{2a+b(d-n)}{2}\|\nabla _{y}
                                                                                                              \varphi\|^{2}_{L^{2}}+\frac{2a+b(d-n-2)}{2}\|\nabla _{z} \varphi\|^{2}_{L^{2}}\\
  &\quad+\frac{2a+b(d-n)}{2}\|y \varphi\|^{2}_{L^{2}}\\
&\quad+\frac{2a+b(d-n)}{2}\|\varphi\|^{2}_{L^{2}}-\frac{a(2\sigma+2)+b(d-n)}{2\sigma+2}\|\varphi\|^{2\sigma+2}_{L^{2\sigma+2}}.
\end{align*}
In particular, when $(a,b)=(1,0)$ and $(a,b)=(1,-2/(d-n))$ we obtain the functionals $I$ and $P$ respectively.
In the next result, we see that $J^{a,b}$ is positive near
the origin in the space ${B}_{1}$.
\smallbreak

As a technical preliminary, denote
 \[\|u\|_{\dot{B}_{1}}^2=\|\nabla_{x}u\|^{2}_{L^{2}}+\|yu\|^{2}_{L^{2}}\]
 the homogeneous counterpart of the $B_1$-norm. From the uncertainty
 principle in $y$, and 
Cauchy-Schwarz inequality in $z$,
\begin{equation*}
  \|\varphi\|_{L^2}^2 \le \frac{2}{n}\|y\varphi\|_{L^2}\|\nabla_y \varphi\|_{L^2}.
\end{equation*}
In particular, $\|u\|_{B_1}\sim \|u\|_{\dot B_1}$.
 
\begin{lemma} \label{LP1}
  Let $(a,b)$ satisfying \eqref{Cd1}, with in addition $2a+b(d-n)>0$.
Let $\left\{v_{k}\right\}^{\infty}_{k=1}\subset {B}_{1}  \setminus
\left\{0 \right\}$ be bounded in ${B}_{1}$ such that
$\lim_{k\rightarrow \infty}\|v_{k}\|_{\dot{B}_{1}}=0$. Then
for sufficiently large $k$, we have $J^{a,b}(v_{k})>0$.  
\end{lemma}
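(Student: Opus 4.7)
The plan is to bound $J^{a,b}$ from below by a coercive quadratic functional of $\|\varphi\|_{B_1}$ minus a term of order $\|\varphi\|_{B_1}^{2\sigma+2}$, and then to use the equivalence $\|\cdot\|_{B_1}\sim\|\cdot\|_{\dot B_1}$ recalled just before the statement to conclude that, whenever $\|v_k\|_{\dot B_1}$ is small enough, the coercive quadratic part dominates the higher order nonlinear contribution.

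First, I would check the positivity of the quadratic coefficients appearing in the definition of $J^{a,b}$. Set $c_1:=2a+b(d-n)$ and $c_2:=2a+b(d-n-2)$; then $c_2=c_1-2b\geq c_1$ since $b\leq 0$, and the strengthened hypothesis $2a+b(d-n)>0$ is exactly $c_1>0$. Denote moreover $c_3:=a(2\sigma+2)+b(d-n)$ the coefficient of $\|\varphi\|^{2\sigma+2}_{L^{2\sigma+2}}$ in $J^{a,b}$ (whose sign we do not need to determine). Gathering the quadratic terms with their coefficients, one directly obtains
\[
  J^{a,b}(\varphi)\geq \tfrac{c_1}{2}\|\varphi\|_{B_1}^{2}-\tfrac{|c_3|}{2\sigma+2}\|\varphi\|_{L^{2\sigma+2}}^{2\sigma+2}.
\]

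Next, I would invoke the Sobolev embedding $H^1(\R^d)\hookrightarrow L^{2\sigma+2}(\R^d)$, valid under the standing assumption $\sigma<\tfrac{2}{d-2}$, together with $\|\varphi\|_{H^1}\leq \|\varphi\|_{B_1}$, to deduce $\|\varphi\|^{2\sigma+2}_{L^{2\sigma+2}}\lesssim \|\varphi\|_{B_1}^{2\sigma+2}$. Substituting into the previous inequality yields
\[
  J^{a,b}(\varphi)\geq \|\varphi\|_{B_1}^{2}\Bigl(\tfrac{c_1}{2}-C\|\varphi\|_{B_1}^{2\sigma}\Bigr)
\]
for some constant $C>0$ depending only on $d$, $\sigma$, $a$, $b$.

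To conclude, since $\|u\|_{B_1}\sim\|u\|_{\dot B_1}$, the assumption $\|v_k\|_{\dot B_1}\to 0$ gives $\|v_k\|_{B_1}\to 0$, while $v_k\neq 0$ ensures $\|v_k\|_{B_1}>0$. For $k$ large enough that $C\|v_k\|_{B_1}^{2\sigma}<c_1/2$, the parenthesis above is positive, hence $J^{a,b}(v_k)>0$, as claimed. I do not foresee any substantial obstacle: the only point deserving attention is that the sign of $c_3$ is left unspecified, but this plays no role because the nonlinear contribution is of strictly higher order in $\|\varphi\|_{B_1}$ than the coercive quadratic part and is absorbed regardless of its sign.
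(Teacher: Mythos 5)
Your proof is correct and follows essentially the same route as the paper: bound the quadratic part of $J^{a,b}$ from below by $\tfrac{c_1}{2}\|\cdot\|_{B_1}^2$ (using $c_2\ge c_1>0$), control the $L^{2\sigma+2}$ term by the $B_1$-norm via Sobolev/Gagliardo--Nirenberg, then use the smallness of $\|v_k\|_{\dot B_1}\sim\|v_k\|_{B_1}$ together with $v_k\neq 0$. The only cosmetic differences are that the paper works directly with $\|\cdot\|_{\dot B_1}$ rather than $\|\cdot\|_{B_1}$, and your absolute value on $c_3$ is unnecessary since $c_3=(2a+b(d-n))+2\sigma a>0$ under \eqref{Cd1}.
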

\begin{proof}
 Gagliardo-Nirenberg inequality yields
\begin{align*}
J^{a,b}(v_{k})&\geq \frac{2a+b(d-n)}{2}\|v_{k}\|_{\dot{B}_{1}}^2- \frac{a(2\sigma+2)+b(d-n)}{2\sigma+2}\|v_k\|^{2\sigma+2}_{L^{2\sigma+2}}  \\
&\geq\frac{2a+b(d-n)}{2}\|v_{k}\|_{\dot{B}_{1}}^2- \frac{a(2\sigma+2)+b(d-n)}{2\sigma+2}C\|v_{k}\|^{2\sigma +2}_{\dot{B}_{1}},
\end{align*}
where $C$ is a positive constant. 
Since $2a+b(d-n)>0$,  we infer that for sufficiently large $k$, $J^{a,b}(v_{k})>0$. This proves the lemma.
\end{proof}

Next, we consider the minimization problem
\begin{align}\label{MPE2}
d^{a,b}&:={\inf}\left\{S(u):\, u\in  {B}_{1}  \setminus  \left\{0 \right\},  J^{a,b}(u)=0\right\},\\
{U}^{a,b}&=\bigl\{ \varphi\in {B}_{1}: S(\varphi)=d^{a,b}\quad \text{and}\quad J^{a,b}(u)=0\bigl\}.
\end{align}
\begin{lemma} \label{L1}
Let $(a,b)$ satisfying \eqref{Cd1}, with in addition
$2a+b(d-n)>0$. Then the set  ${U}^{a,b}$ is not empty. That
is, there exists $Q\in {B}_{1}$ such that  
$S(Q)=d^{a,b}$ and $J^{a,b}(Q)=0$.
\end{lemma}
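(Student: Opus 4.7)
The strategy is a variational argument on a minimizing sequence, combined with a $z$-translation to recover compactness and a scaling trick to identify the weak limit as a minimizer. The plan is to introduce the auxiliary functional
\[
\tilde S(u):= S(u)-\mu J^{a,b}(u),\qquad \mu := \frac{1}{a(2\sigma+2)+b(d-n)},
\]
whose $\mu$ is designed to cancel the nonlinear contribution. A direct computation using $\sigma a+b>0$, $2a+b(d-n)>0$, and $b\le 0$ shows that
\[
\tilde S(u) = C_1\(\|\nabla_y u\|^2_{L^2}+\|yu\|^2_{L^2}+\|u\|^2_{L^2}\)+C_2\|\nabla_z u\|^2_{L^2}
\]
with $C_1,C_2>0$, so $\tilde S$ is equivalent to $\|\cdot\|^2_{B_1}$ and coincides with $S$ on the constraint $\{J^{a,b}=0\}$. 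This immediately bounds any minimizing sequence $\{u_k\}$ in $B_1$; moreover $J^{a,b}(u_k)=0$ combined with Lemma~\ref{LP1} yields $\|u_k\|_{\dot{B}_1}\ge \delta>0$ and hence $d^{a,b}>0$. Note also that under the rescaling $u\mapsto u^{a,b}_\lambda$ with $\lambda\le 0$, both scaling exponents appearing in $\tilde S$, namely $2a+b(d-n)$ and $2a+b(d-n-2)=(2a+b(d-n))-2b$, are nonnegative (the latter because $b\le 0$), so $\tilde S(u^{a,b}_\lambda)\le \tilde S(u)$, strictly when $\lambda<0$ and $u\ne 0$.

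The main obstacle comes next: extracting a non-trivial weak limit. The harmonic confinement in $y$ provides a compact embedding of $B_1$ into $L^{2\sigma+2}_{\mathrm{loc}}(\R^{d-n};L^{2\sigma+2}(\R^n))$, but translations in the $z$-direction still destroy global compactness. I would invoke a Lions-type concentration-compactness argument in $z$: since $J^{a,b}(u_k)=0$ forces $\|u_k\|^{2\sigma+2}_{L^{2\sigma+2}}$ to be bounded below (from the lower bound on $\|u_k\|_{\dot{B}_1}$), if the $L^2$-mass of $u_k$ escaped uniformly to infinity in $z$, interpolation with the compact $y$-embedding would force $\|u_k\|_{L^{2\sigma+2}}\to 0$, a contradiction. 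Hence there exist $z_k\in\R^{d-n}$ such that $v_k:=u_k(\cdot,\cdot-z_k)\rightharpoonup v_*\ne 0$ in $B_1$ along a subsequence. Since $|y|^2$ does not depend on $z$, both $S$ and $J^{a,b}$ are invariant under $z$-translations, so $J^{a,b}(v_k)=0$ and $S(v_k)\to d^{a,b}$.

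I would then combine Brezis-Lieb's lemma for the $L^{2\sigma+2}$-norm with the Hilbert orthogonality of weak convergence in $B_1$ to obtain the splittings
\[
S(v_k)=S(v_*)+S(v_k-v_*)+o(1),\qquad J^{a,b}(v_k)=J^{a,b}(v_*)+J^{a,b}(v_k-v_*)+o(1),
\]
and therefore $\tilde S(v_k)=\tilde S(v_*)+\tilde S(v_k-v_*)+o(1)$. Setting $w_k:=v_k-v_*$, the identity $\tilde S(v_k)=S(v_k)\to d^{a,b}$ gives $d^{a,b}=\tilde S(v_*)+\lim_k\tilde S(w_k)$, with $\tilde S(v_*)>0$ since $v_*\ne 0$.

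To close, I would show $J^{a,b}(v_*)=0$. If $\liminf_k J^{a,b}(w_k)<0$, an intermediate-value argument (using Lemma~\ref{LP1} to guarantee $J^{a,b}((w_k)^{a,b}_\lambda)>0$ for $\lambda$ sufficiently negative) produces $\lambda_k\le 0$ with $J^{a,b}((w_k)^{a,b}_{\lambda_k})=0$, so
\[
d^{a,b}\le S\((w_k)^{a,b}_{\lambda_k}\)=\tilde S\((w_k)^{a,b}_{\lambda_k}\)\le \tilde S(w_k),
\]
and passing to the limit contradicts $\lim_k\tilde S(w_k)=d^{a,b}-\tilde S(v_*)<d^{a,b}$. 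Therefore $J^{a,b}(v_*)\le 0$, and were it strictly negative the same scaling trick applied directly to $v_*$ would produce a competitor strictly below $d^{a,b}$, again a contradiction. Hence $J^{a,b}(v_*)=0$, and $S(v_*)=\tilde S(v_*)\le d^{a,b}$, which together with the definition of $d^{a,b}$ forces equality, so $v_*\in U^{a,b}$.
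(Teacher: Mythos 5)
Your proof is correct and follows essentially the same strategy as the paper: the same auxiliary functional (the paper calls it $B^{a,b}$, your $\tilde S$), the same concentration--compactness step (the paper cites \cite[Lemma~3.4]{BEBOJEVI2017}, which is exactly the statement you sketch), the same Brezis--Lieb splittings, and a scaling argument to rule out $J^{a,b}(v_*)\neq 0$. The one genuine technical difference lies in the scaling used to rule out the bad sign of $J^{a,b}$. The paper's Step~2 multiplies by a constant, $u\mapsto \lambda u$ with $\lambda\in(0,1)$: since $J^{a,b}$ is ``quadratic minus $(2\sigma+2)$-power,'' $J^{a,b}(u)<0$ forces some $\lambda\in(0,1)$ with $J^{a,b}(\lambda u)=0$, and then $B^{a,b}(\lambda u)=\lambda^2 B^{a,b}(u)<B^{a,b}(u)$ is immediate. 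You instead use the $(a,b)$-parameter rescaling $u\mapsto u^{a,b}_\lambda$ with $\lambda\le 0$, whose monotonicity for $\tilde S$ hinges on the sign conditions $2a+b(d-n)>0$, $b\le 0$. Both work, but the constant multiple is appreciably cleaner (a single exponent, no case analysis on the scaling exponents) and is the natural choice here since $B^{a,b}$ is a homogeneous quadratic form; you might adopt it. Your organization of the end-game is also slightly different (you rule out $\liminf J^{a,b}(w_k)<0$ for the remainder $w_k=v_k-v_*$, then conclude $J^{a,b}(v_*)\le 0$, and finally rule out strict negativity on $v_*$), whereas the paper case-splits directly on the sign of $J^{a,b}(v_*)$, applying Brezis--Lieb to $u_n-u$ only in the case $J^{a,b}(v_*)>0$; again both are valid, but the paper's bookkeeping is lighter.
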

\begin{proof}
We introduce the functional
\begin{align}\label{Co1}
B^{a,b}(u)&=S(u)-\frac{1}{a(2\sigma+2)+b(d-n)}J^{a,b}(u)\\\nonumber
&=\alpha_{1}\|\nabla _{y} u\|^{2}_{L^{2}}+\alpha_{2}\|\nabla _{z} u\|^{2}_{L^{2}}+\alpha_{1}\|y u\|^{2}_{L^{2}}+\alpha_{1}\|u\|^{2}_{L^{2}},
\end{align}
where 
\begin{equation*}
\alpha_{1}:=\frac{1}{2}\left(1-\frac{2a+b(d-n)}{a(2\sigma+2)+b(d-n)}\right)>0, \quad \alpha_{2}:=\frac{1}{2}\left(1-\frac{2a+b(d-n-2)}{a(2\sigma+2)+b(d-n)}\right)>0.
\end{equation*}
To claim that $\alpha_2>0$, we have used $\sigma a+b>0$. From \eqref{Co1}, it is clear that there exist constants $C_{1}$, $C_{2}>0$ such that for all $u\in {B}_{1}$,
\begin{equation}\label{Eqi2}
C_{1}\| u\|^{2}_{{B}_{1}}\leq B^{a,b}(u) \leq C_{2}\| u\|^{2}_{{B}_{1}}.
\end{equation}
Notice that
\begin{equation}\label{VP21}
d^{a,b}={\inf}\left\{B^{a,b}(u):\, u\in {B}_{1} \setminus  \left\{0 \right\},  J^{a,b}(u)=0\right\}.
\end{equation}
\textbf{Step 1.} We claim that $d^{a,b}>0$.
Indeed, let $u\neq0$ such that $J^{a,b}(u)=0$.
Then we have, in view of \eqref{Cd1} and since $2a+b(d-n)>0$,
\begin{equation*}
  \|u\|_{B_1}^2\lesssim \|u\|^{2\sigma+2}_{L^{2\sigma+2}}\lesssim
  \|u\|_{B_1}^{2\si+2}, 
\end{equation*}
where we have used Gagliardo-Nirenberg inequality and the uncertainty
principle like in the previous proof. This implies $\|u\|_{B_1}\gtrsim
1$, hence $B^{a,b}(u)\gtrsim 1$ from  \eqref{Eqi2}.\\
\textbf{Step 2.} If $u\in  {B}_{1}$ satisfies $J^{a,b}(u)<0$, then $d^{a,b}<B^{a,b}(u)$. Indeed,
as $J^{a,b}(u)<0$, a simple calculation shows that there exists $\lambda\in (0,1)$ such that $J^{a,b}(\lambda u)=0$. Thus, by definition of $d^{a,b}$, we obtain
\begin{equation*}
d^{a,b}\leq B^{a,b}(\lambda u)=\lambda^{2} B^{a,b}(u)<B^{a,b}(u).
\end{equation*}
\textbf{Step 3.} We will need the following result that was proved in \cite[Lemma 3.4]{BEBOJEVI2017} (see also \cite{OhtaPartial2018}): assume that the sequence $\left\{u_{k}\right\}^{\infty}_{k=1}$ is bounded in ${B}_{1}$ and satisfies 
\begin{equation*}
\limsup_{k\rightarrow\infty} \|u_{k}\|^{2\sigma +2}_{L^{2\sigma +2}}\geq C>0.
\end{equation*}
 Then, there exist a sequence $\left\{z_{k}\right\}^{\infty}_{k=1}\subset \mathbb{R}^{d-n}$ and $u\neq 0$ such that, passing to a subsequence if necessary
\begin{equation*}
\tau_{z_{k}}u_{k}(y,z):=u_{k}(y,z-z_{k})\rightharpoonup u \quad \text{weakly in ${B}_{1}$.}
\end{equation*}
\textbf{Step 4.} We claim that ${U}^{a,b}$ is not empty. Let $\left\{u_{k}\right\}^{\infty}_{k=1}$ be a minimizing sequence of $d^{a,b}$. Since $B^{a,b}(u_{k})\rightarrow d^{a,b}$ as $k$ goes to $\infty$, by \eqref{Eqi2} we infer that the sequence 
$\left\{u_{k}\right\}^{\infty}_{k=1}$ is bounded in ${B}_{1}$. Moreover, as $J^{a,b}(u_{k})=0$ we have
\begin{align*}
\|u_{k}\|^{2\sigma +2}_{L^{2\sigma +2}}\gtrsim \|u_k\|_{B_1}^2\gtrsim
B^{a,b}(u_{k}) \rightarrow d^{a,b}>0,
\end{align*}
as $k\rightarrow \infty$. Therefore, $\limsup_{k\rightarrow\infty}
\|u_{k}\|^{2\sigma+2}_{L^{2\sigma +2}}\geq C>0$. Thus, by Step~3 there
exist a sequence $\left\{z_{k}\right\}\subset \mathbb{R}^{d-n}$ and
$u\neq 0$ such that $\tau_{z_{k}}u_{k}\rightharpoonup u$ weakly in
${B}_{1}$. We set $v_{k}(x):=\tau_{z_{k}}u_{k}(x).$ Now, we prove that
$J^{a,b}(u)=0$.  Suppose that $J^{a,b}(u)<0$. By the
weakly lower semicontinuity of $B^{a,b}$ and Step 2 we see
that 
\begin{equation*}
d^{a,b}< B^{a,b}(u)\leq \liminf_{k\rightarrow \infty}B^{a,b}(u_{k})=d^{a,b},
\end{equation*}
which is impossible. Now we assume that $J^{a,b}(u)>0$.  From Brezis-Lieb Lemma we get
\begin{equation*}
\lim_{n\rightarrow\infty }J^{a,b}(u_{n}-u)=\lim_{n\rightarrow\infty }\left\{J^{a,b}(u_{n})-J^{a,b}(u)\right\}=-J^{a,b}(u)<0.
\end{equation*}
This implies that $J^{a,b}(u_{n}-u)<0$ for sufficiently large $n$. Thus, applying the same argument as above, we see that
\begin{equation*}
d^{a,b}\leq  \lim_{n\rightarrow\infty } B^{a,b}(u_{n}-u)=\lim_{n\rightarrow\infty }\left\{ B^{a,b}(u_{n})- B^{a,b}(u)\right\}=d^{a,b}-B^{a,b}(u)<d^{a,b},
\end{equation*}
because $B^{a,b}(u)>0$. Therefore $J^{a,b}(u)=0$ and
\begin{equation*}
d^{a,b}\leq S(u)=B^{a,b}(u)\leq \liminf_{n\rightarrow\infty} B^{a,b}(u_{n})=d^{a,b}.
\end{equation*}
In particular, $S(u)=d^{a,b}$ and $u\in {U}^{a,b}$. This concludes the proof of lemma.
\end{proof}

\begin{remark}\label{Rgs}
Lemma~\ref{L1} shows that the set of ground states is not empty. Indeed, in the case $(a,b)=(1,0)$,  from Lemma \ref{L1} we have that 
there exists $Q\in B_{1}$ such that $S(Q)=\inf\left\{S(\varphi):\, I(\varphi)=0\right\}$. This implies  that (see \cite[Chapter 8]{CB})
\begin{equation*}
S(Q)=\inf\left\{S(\varphi):\, \text{ $\varphi$ is a solution of \eqref{Ep}}\right\}.
\end{equation*}
\end{remark}

Now we define the mountain pass level $\beta$ by setting
\begin{equation}\label{mpp}
\beta:=\inf_{\sigma\in\Gamma}\max_{s\in [0,1]}S(\sigma(s)),
\end{equation}
where $\Gamma$ is the set
\begin{align*}
{\Gamma}&:=\bigl\{\sigma\in C([0,1];{B}_{1}): \sigma(0)=0, S(\sigma(1))<0 \bigl\}.      
\end{align*}
\begin{lemma} \label{LL2} Let  $(a,b)$ satisfying \eqref{Cd1}, with in addition $2a+b(d-n)>0$.  We have the following properties.\\
(i)  The functional $S$ has a mountain pass geometry, that is $\Gamma\neq \emptyset$ and $\beta>0$.\\
(ii) The identity $\beta=d^{a,b}$ holds. In particular, if $Q$ is a ground state, then $S(Q)=\beta$.
\end{lemma}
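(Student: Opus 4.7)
The plan is to establish (i) by standard mountain-pass geometry, then prove (ii) as two opposite inequalities $\beta\ge d^{a,b}$ and $\beta\le d^{a,b}$, and finally derive $S(Q)=\beta$ by specializing to $(a,b)=(1,0)$ and invoking Remark~\ref{Rgs}. For (i), to see that $\Gamma\neq\emptyset$, I pick any nonzero $\varphi\in B_{1}$ and use
\[ S(s\varphi)=\frac{s^{2}}{2}\|\varphi\|_{B_{1}}^{2}-\frac{s^{2\sigma+2}}{2\sigma+2}\|\varphi\|_{L^{2\sigma+2}}^{2\sigma+2}\underset{s\to\infty}{\longrightarrow}-\infty, \]
so for $s_{0}$ large, $\sigma(s):=s\,s_{0}\varphi$ defines an admissible path. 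For $\beta>0$, the Gagliardo-Nirenberg estimate used in the proof of Lemma~\ref{LP1} yields $S(u)\ge \frac{1}{2}\|u\|_{B_{1}}^{2}-C\|u\|_{B_{1}}^{2\sigma+2}$, so $S(u)\ge\kappa>0$ on some small sphere $\|u\|_{B_{1}}=\rho$; by continuity every $\sigma\in\Gamma$ crosses this sphere, hence $\max_{s}S(\sigma(s))\ge\kappa$.

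For the lower bound $\beta\ge d^{a,b}$, given $\sigma\in\Gamma$ I set $h(s):=J^{a,b}(\sigma(s))$ and search for a non-trivial zero. For $s>0$ small, if $\sigma(s)\neq 0$ then Lemma~\ref{LP1} gives $h(s)>0$. At $s=1$, the identity
\[ B^{a,b}(u)=S(u)-\frac{1}{\gamma}J^{a,b}(u),\quad \gamma:=a(2\sigma+2)+b(d-n)=(2a+b(d-n))+2a\sigma>0, \]
together with $B^{a,b}\ge 0$, yields $h(1)\le\gamma S(\sigma(1))<0$. The intermediate value theorem produces $s_{0}\in(0,1)$ with $h(s_{0})=0$; taking $s_{0}$ to be the supremum of $\{s\in[0,1]:h\ge 0 \text{ on }[0,s]\}$ and invoking Lemma~\ref{LP1} once more rules out $\sigma(s_{0})=0$. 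Then $\max_{s}S(\sigma(s))\ge S(\sigma(s_{0}))\ge d^{a,b}$.

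For the upper bound $\beta\le d^{a,b}$, given any $u\in B_{1}\setminus\{0\}$ with $J^{a,b}(u)=0$, I would build a path through $u$ whose maximum of $S$ equals $S(u)$. Using the scaling $u^{a,b}_{\lambda}$ and the identities recalled before Lemma~\ref{LP1}, the function $g(\lambda):=S(u^{a,b}_{\lambda})$ has the form
\[ g(\lambda)=\frac{P}{2}e^{\alpha\lambda}+\frac{Q}{2}e^{\beta_{z}\lambda}-\frac{R}{2\sigma+2}e^{\gamma\lambda}, \]
with $\alpha=2a+b(d-n)$, $\beta_{z}=2a+b(d-n-2)$, and $P,Q,R>0$ the obvious squared norms. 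The hypotheses on $(a,b)$ give $0<\alpha\le\beta_{z}<\gamma$ (notably $\gamma-\beta_{z}=2(\sigma a+b)>0$), and $J^{a,b}(u)=0$ reads $g'(0)=0$. Using this to eliminate $R$ rearranges $g$ as
\[ g(\lambda)=\frac{P}{2}\Big(e^{\alpha\lambda}-\frac{\alpha}{\gamma}e^{\gamma\lambda}\Big)+\frac{Q}{2}\Big(e^{\beta_{z}\lambda}-\frac{\beta_{z}}{\gamma}e^{\gamma\lambda}\Big), \]
and for each $c\in(0,\gamma)$ the function $f_{c}(\lambda):=e^{c\lambda}-(c/\gamma)e^{\gamma\lambda}$ is maximized at $\lambda=0$ (its derivative $c(e^{c\lambda}-e^{\gamma\lambda})$ vanishes only there). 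Hence $g(\lambda)\le g(0)=S(u)$ and $g(\lambda)\to-\infty$ as $\lambda\to+\infty$. I fix $T_{+}$ with $g(T_{+})<0$ and $T>0$ so large that $\frac{1}{2}\|u^{a,b}_{-T}\|_{B_{1}}^{2}<S(u)$, then concatenate the linear segment from $0$ to $u^{a,b}_{-T}$ (on which $S\le\frac{1}{2}\|u^{a,b}_{-T}\|_{B_{1}}^{2}<S(u)$) with the scaling arc $\lambda\in[-T,T_{+}]$ (on which $S\le S(u)$), and reparametrize to $[0,1]$. This produces an element of $\Gamma$ of maximum $S(u)$, whence $\beta\le S(u)$ and taking the infimum over $u$ gives $\beta\le d^{a,b}$.

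The ground-state assertion then follows by applying (ii) with $(a,b)=(1,0)$: $\beta=d^{1,0}=S(Q)$ by Remark~\ref{Rgs}. I expect the main obstacle to be the bound $\beta\le d^{a,b}$: since the scaling $u\mapsto u^{a,b}_{\lambda}$ mixes three distinct exponents $\alpha,\beta_{z},\gamma$ (rather than a single-parameter dilation), showing that $\lambda=0$ is the \emph{global} maximum of $g$ under $g'(0)=0$ is precisely what requires the algebraic rearrangement into the $f_{c}$-form above. The remaining steps are routine mountain-pass/intermediate-value arguments that rest on Lemmas~\ref{LP1} and~\ref{L1}.
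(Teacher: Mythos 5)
Your argument is correct and follows the same route as the paper's: a scaling path $s\mapsto Ls\,v$ for $\Gamma\neq\emptyset$, a Gagliardo--Nirenberg coercivity bound for $\beta>0$, an intermediate-value argument on $s\mapsto J^{a,b}(\sigma(s))$ for $\beta\ge d^{a,b}$, and a concatenation of the $\varphi^{a,b}_\lambda$ scaling arc with a linear segment for $\beta\le d^{a,b}$. Your explicit verification that $\lambda\mapsto S(\varphi^{a,b}_\lambda)$ is globally maximized at $\lambda=0$ (via the decomposition into the $f_c$ pieces) and your quantitative choice of $T$ so that $\tfrac12\|\varphi^{a,b}_{-T}\|_{B_1}^2<S(\varphi)$ make rigorous two steps the paper states without detail.
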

\begin{proof}
(i) Let $v\in {B}_{1}\setminus\left\{0\right\}$. For $s>0$ we obtain
\begin{equation*}
S(sv)=s^{2}\|v\|^{2}_{B_{1}}-\frac{s^{2\sigma+2}}{2\sigma+2}\|v\|^{2\sigma+2}_{L^{2\sigma+2}}.
\end{equation*}
Let $L>0$ such that $S(Lv)<0$. We define $\sigma(s):=Lsv$. Then $\sigma\in C([0,1];{B}_{1})$, $\sigma(0)=0$ and $S(\sigma(1))<0$; this implies that $\Gamma$ is nonempty. On the other hand, notice that, by the embedding of ${B}_{1}\hookrightarrow L^{2\sigma+2}$ we have
\begin{equation*}
S(v)\geq \frac{1}{2}\|v\|^{2}_{{B}_{1}}-\frac{C}{2\sigma+2}\|v\|^{2\sigma+2}_{{B}_{1}}.
\end{equation*}
Taking $\eps>0$ small  enough we have
\begin{equation*}
\delta:=\frac{1}{2}\eps^{2}-\frac{C}{2\sigma+2}\eps^{2\sigma+2}>0.
\end{equation*}
Thus, if $\|v\|^{2}_{{B}_{1}}<\eps$, then $S(v)>0$. Therefore, for any $\sigma\in \Gamma$ we have $\|\sigma(1)\|^{2}_{{B}_{1}}>\eps$, and by continuity of $\sigma$, there exists $s_{0}\in [0,1]$ such that $\sigma(s_{0})=\eps$. This implies that
\begin{equation*}
\max_{s\in [0,1]}S(\sigma(s))\geq S(\sigma(s_{0}))\geq \delta>0.
\end{equation*}
By definition of $\beta$, we see that $\beta\geq \delta>0$.\\
(ii) Let $\sigma\in \Gamma$. Since $\sigma(0)=0$, by Lemma \ref{LP1} we infer that there exists $s_{0}>0$ such that $J^{a,b}(\sigma(s_{0}))>0$. Also we note that from \eqref{Co1} we have
\begin{align*}
J^{a,b}(\sigma(1))&=(a(2\sigma+2)+b(d-n))\left\{S(\sigma(1))-B^{a,b}(\sigma(1))\right\}\\
&<(a(2\sigma+2)+b(d-n))S(\sigma(1))<0.
\end{align*}
By continuity of $s\mapsto J^{a,b}(\sigma(s))$, we infer that there exists $s^{\ast}\in (0,1)$ such that $J^{a,b}(\sigma(s^{\ast}))=0$. This implies that
\begin{equation*}
\max_{s\in [0,1]}S(\sigma(s))\geq S(\sigma(s^{\ast}))\geq d^{a,b}.
\end{equation*}
Taking the infimum on $\Gamma$, we obtain $\beta \geq d^{a,b}$. Now we prove $\beta\leq d^{a,b}$. Let $\varphi\in {B}_{1}\setminus\left\{0\right\}$ be such that $J^{a,b}(\varphi)=0$. We put $f(s):=\varphi^{a,b}_{s}(y, z)$ for $s\in \mathbb{R}$, where $\varphi^{a,b}_{s}$ is defined in \eqref{Fm}. Notice that as $a\sigma+b>0$, it follows that $S(f(s))<0$ for sufficiently large $s>0$.
Since
$\left.\partial_{s}S(f(s))\right|_{s=0}=J^{a,b}(\varphi)=0$,
it follows that $\max_{s\in
  \mathbb{R}}S(f(s))=S(f(0))=S(\varphi)$. Let
$L>0$ be such that $S(f(L))<0$. We define 
\begin{equation*}
h(s):=
\begin{cases} 
f(s)\quad \text{if $-\frac{L}{2}\leq s \leq L$,}\\
\frac{2}{L}(s+L)f(-\frac{L}{2})\quad \text{if $-{L}\leq s \leq -\frac{L}{2}$.}
\end{cases} 
\end{equation*}
Then $s\mapsto h(s)$ is continuous in ${B}_{1}$, $S(h(L))<0$,
$S(h(-L))=0$ and
\[\max_{s\in
    [-L,L]}S(h(s))=S(h(0))=S(\varphi).\]
By changing variables, we infer that there exists $\sigma\in\Gamma$ such that $\max_{s\in [0,1]}S(\sigma(s))=S(\varphi)$. Thus,
\begin{equation*}
\beta\leq \max_{s\in [0,1]}S(\sigma(s))=S(\varphi)
\end{equation*}
for all $\varphi\in {B}_{1}\setminus\left\{0\right\}$  such that $J^{a,b}(\varphi)=0$. This implies that $\beta\leq d^{a,b}$. 
\end{proof}

Now we introduce the sets $\mathcal{K}^{a,b,\pm}$ defined by
\begin{align*}
\mathcal{K}^{a,b,+}&=\bigl\{ \varphi\in {B}_{1}: S(\varphi)<\beta,\quad  J^{a,b}(\varphi)\geq 0\bigl\}, \\
\mathcal{K}^{a,b,-}&=\bigl\{ \varphi\in {B}_{1}: S(\varphi)<\beta,\quad  J^{a,b}(\varphi)<0\bigl\}.  
\end{align*}

\begin{lemma} \label{L3}
The sets $\mathcal{K}^{a,b,\pm}$ are independent of
$(a,b)$ satisfying \eqref{Cd1}.
\end{lemma}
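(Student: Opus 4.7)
My plan is to combine the intrinsic nature of the mountain pass level $\beta$ with a continuity/connectedness argument in the parameter $(a,b)$. First, I would note that $\beta$ defined in \eqref{mpp} depends only on the action functional $S$, not on $(a,b)$; thus the condition $S(\varphi) < \beta$ in the definitions of $\mathcal{K}^{a,b,\pm}$ is manifestly the same for every admissible $(a,b)$. The task therefore reduces to showing that, for each fixed $\varphi \in B_1$ with $S(\varphi) < \beta$, the sign of $J^{a,b}(\varphi)$ is the same for all $(a,b)$ satisfying \eqref{Cd1}, with the convention that $\varphi = 0$ lies in every $\mathcal{K}^{a,b,+}$.

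The two structural observations I would rely on are: (i) the admissible set $\Omega := \{(a,b) : a > 0,\ b \leq 0,\ 2a + b(d-n) \geq 0,\ \sigma a + b > 0\}$ is convex, hence path-connected; and (ii) for each fixed $\varphi$, the map $(a,b) \mapsto J^{a,b}(\varphi)$ is linear (direct inspection gives $J^{a,b} = a I + b K$ for an auxiliary functional $K$), hence continuous. The crux is then the nonvanishing claim: for $\varphi \neq 0$ with $S(\varphi) < \beta$, one has $J^{a,b}(\varphi) \neq 0$ for every $(a,b) \in \Omega$. Once this is in hand, continuity and connectedness force $\mathrm{sign}\,J^{a,b}(\varphi)$ to be constant on $\Omega$, whence $\mathcal{K}^{a,b,\pm} = \mathcal{K}^{a',b',\pm}$ for all admissible $(a,b),(a',b')$.

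For $(a,b)$ in the open region $\Omega^\circ := \Omega \cap \{2a + b(d-n) > 0\}$, the nonvanishing is immediate from Lemma \ref{LL2}(ii): if $J^{a,b}(\varphi) = 0$ with $\varphi \neq 0$, then by the definition of the minimization \eqref{MPE2} one would have $S(\varphi) \geq d^{a,b} = \beta$, contradicting $S(\varphi) < \beta$. Since $\Omega^\circ$ is itself convex, this already yields the invariance of $\mathcal{K}^{a,b,\pm}$ as $(a,b)$ ranges over the interior, the common sign of $J^{a,b}(\varphi)$ being determined, for instance, by its value at the reference point $(1,0)$ where $J^{1,0} = I$.

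The main obstacle is extending the nonvanishing to the boundary half-line $\Omega \setminus \Omega^\circ = \{(a,-2a/(d-n)) : a > 0\}$, which contains precisely the point $(1,-2/(d-n))$ producing the functional $P$ of \eqref{Fp} that is essential for Theorem \ref{Th1}. I would handle it by revisiting the proof of Lemma \ref{L1}: at such $(a_0,b_0)$ the coefficients $\alpha_1 = 1/2$ and $\alpha_2 = \tfrac{1}{2}(1 - \tfrac{2}{(d-n)\sigma})$ in \eqref{Co1} remain strictly positive thanks to the standing assumption $\sigma > 2/(d-n)$, so the equivalence \eqref{Eqi2} persists on the boundary and the concentration-compactness arguments of Lemmas \ref{L1} and \ref{LL2}(ii) go through with only cosmetic modifications to establish $d^{a_0,b_0} = \beta$ there as well. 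The interior nonvanishing argument then applies uniformly on $\Omega$, concluding the proof.
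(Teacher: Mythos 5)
Your overall strategy is a genuinely different route from the paper's. You reduce the statement to proving that, for fixed $\varphi\neq 0$ with $S(\varphi)<\beta$, the map $(a,b)\mapsto J^{a,b}(\varphi)$ has constant sign on the convex set $\Omega$, via linearity/continuity in $(a,b)$ plus a nonvanishing claim. The paper instead argues topologically: for interior parameters it shows $\mathcal{K}^{a,b,+}$ and $\mathcal{K}^{a,b,-}$ are open and $\mathcal{K}^{a,b,+}$ is connected and contains $0$, then deduces parameter-independence from the fixed decomposition $\{S<\beta\}=\mathcal{K}^{a,b,+}\sqcup\mathcal{K}^{a,b,-}$, and handles the boundary $2a+b(d-n)=0$ by approximating with interior $(a_j,b_j)$ and using $\mathcal{K}^{a,b,\pm}\subset\bigcup_j\mathcal{K}^{a_j,b_j,\pm}$. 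Your framework is arguably cleaner, and the interior nonvanishing via Lemma~\ref{LL2}(ii) is correct.

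The gap is in the boundary step. You assert that the arguments of Lemma~\ref{L1} and Lemma~\ref{LL2}(ii) ``go through with only cosmetic modifications'' when $2a+b(d-n)=0$, to establish $d^{a_0,b_0}=\beta$. That is not the case. First, Lemma~\ref{LP1}, which Lemma~\ref{LL2}(ii) invokes to produce $s_0$ with $J^{a,b}(\sigma(s_0))>0$ near the origin, is stated only under $2a+b(d-n)>0$, and it genuinely fails on the boundary: there the coefficients of $\|\nabla_y u\|^2$, $\|yu\|^2$ and $\|u\|^2$ in $J^{a,b}$ all vanish, so one can take $v_k\to 0$ in $B_1$ that concentrate in $z$ and make $J^{a,b}(v_k)<0$. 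Second, Step~1 of Lemma~\ref{L1} shows $d^{a,b}>0$ by deducing $\|u\|_{B_1}^2\lesssim\|u\|_{L^{2\sigma+2}}^{2\sigma+2}$ from $J^{a,b}(u)=0$; on the boundary the constraint $J^{a,b}(u)=0$ only relates $\|\nabla_z u\|^2$ to $\|u\|_{L^{2\sigma+2}}^{2\sigma+2}$, so the same chain of inequalities breaks down, and positivity of the $\alpha_i$ in \eqref{Co1} does not rescue it. The boundary nonvanishing you need is true, but it requires a different argument: it is exactly what the paper proves in Remark~\ref{Remarp}, by rescaling $\varphi^{r}(x)=r^{(d-n)/2}\varphi(y,rz)$ to reduce $P(\varphi)=0$ to the Nehari condition $I(\varphi^{r_0})=0$, using $\sigma(d-n)>2$ and $d^{1,0}=\beta$, and checking that $r\mapsto S(\varphi^{r})$ is maximized at $r=1$. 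Your proof should either invoke that scaling argument explicitly or adopt the paper's limiting strategy; as written, the key boundary assertion is unsubstantiated.
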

\begin{proof}
  Suppose first that in addition to \eqref{Cd1}, we have $2a+b(d-n)>0$. \\
It is clear that $\mathcal{K}^{a,b,-}$ is  open in
${B}_{1}$. Now we prove that $\mathcal{K}^{a,b,+}$ is
open. First, notice that by Lemma~\ref{L1}, if
$S(\varphi)<\beta$ and $J^{a,b}(\varphi)= 0$ then
$\varphi=0$. Moreover, using the fact that  a neighborhood of $0$ is
contained in $\mathcal{K}^{a,b,+}$ by Lemma \ref{LP1}, this
implies that $\mathcal{K}^{a,b,+}$ is  open in ${B}_{1}$. On
the other hand, since $2a+b(d-n)>0$ (notice that this implies
that $\|\varphi^{a,b}_{\lambda}\|_{B_{1}}\rightarrow0$ as
$\lambda\rightarrow -\infty$), using the same argument developed in
the proof of \cite[Lemma 2.9]{IbraMasmNaka2011}  it is not difficult
to show that $\mathcal{K}^{a,b,+}$ is connected. Thus, since
$0\in \mathcal{K}^{a,b,+}$ and
$\mathcal{K}^{a,b,+}\cup \mathcal{K}^{a,b,-}$ is
independent of $(a,b)$ (see Lemma \ref{LL2} (ii)), we infer that
$\mathcal{K}^{a,b,+}=\mathcal{K}^{a^{\prime},b^{\prime},+}$
for $(a,b)\neq (a^{\prime},b^{\prime})$ such that $2a+b(d-n)>0$ and
$2a^{\prime}+b^{\prime}(d-n)>0$. In particular we have
$\mathcal{K}^{a,b,-}=\mathcal{K}^{a^{\prime},b^{\prime},-}$.
\smallbreak

Now  assume that $2a+b(d-n)=0$.  We choose a sequence $\left\{(a_{j},b_j)
\right\}^{\infty}_{j=1}$ such that $(a_{j},b_{j})$ satisfies
\eqref{Cd1},  converges to
$(a,b)$, and $2a_{j}+b_{j}(d-n)>0$ for all $j$. Then
$J^{a_{j},b_{j}}\rightarrow J^{a,b}$ and we have 
\begin{equation*}
\mathcal{K}^{a,b,\pm}\subset\bigcup_{j\geq 1}\mathcal{K}^{a_{j},b_{j},\pm}.
\end{equation*}
By using the fact that the right side is independent of the parameter,
so is the left, which finishes the proof. 
\end{proof}

The following remark will be used in the sequel.
\begin{remark}\label{Remarp}
If $\varphi\neq 0$ satisfies $P(\varphi)=0$, then $S(\varphi)\geq \beta$. Indeed, we put $\varphi^{r}(x):=r^{\frac{d-n}{2}}\varphi(y, r z)$ for $r>0$. Then
\begin{equation*}
I(\varphi^{r})=r^{2}\|\nabla_{z}\varphi\|^{2}_{L^{2}}-r^{\sigma(d-n)}\|\varphi\|^{2\sigma+2}_{2\sigma+2}+K_{\varphi},
\end{equation*}
where 
\[K_{\varphi}=\|\nabla_{y}\varphi\|^{2}_{L^{2}}+ \|\varphi\|^{2}_{L^{2}}+ \|y\varphi\|^{2}_{L^{2}}>0.\]
From $P(\varphi)=0$, we see that
\[ I(\varphi^{r})=\left(\frac{(d-n)\sigma}{2(\sigma+1)}r^{2}-r^{\sigma(d-n)} \right)\|\varphi\|^{2\sigma+2}_{2\sigma+2}+K_{\varphi}.\]
Since $\sigma(d-n)>2$, there exists $r_{0}\in (0, \infty)$ such that $I(\varphi^{r_{0}})=0$. This implies that $S(\varphi^{r_{0}})\geq \beta$. Moreover, since  $\sigma(d-n)>2$ and  $\left.\partial_{r}S(\varphi^{r})\right|_{r=1}=\left(\frac{d-n}{2}\right)P(\varphi)=0$, it is not difficult to show that the function $r\mapsto S(\varphi^{r})$,  $r\in (0,\infty)$, attains its maximum at $r=1$. Therefore,
\[ S(\varphi) \geq S(\varphi^{r_{0}})\geq\beta.\]
\end{remark}

The next two lemmas will play an important role to get blow-up and global existence results.
\begin{lemma} \label{L4}
Let $\varphi\in \mathcal{K}^{+}$, then
\begin{equation*}
\frac{\sigma}{\sigma+1}\|\varphi\|^{2}_{B_{1}}\leq S(\varphi)\leq \frac{1}{2}\|\varphi\|^{2}_{B_{1}}
\end{equation*}
\end{lemma}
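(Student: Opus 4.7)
The plan is to exploit Lemma~\ref{L3}, which tells us that the set $\mathcal{K}^{+}$ is independent of $(a,b)$ satisfying \eqref{Cd1}. Picking the particular choice $(a,b)=(1,0)$, which is admissible with $2a+b(d-n)=2>0$, the functional $J^{1,0}$ reduces to the Nehari functional $I$. Consequently, $\varphi\in\mathcal{K}^{+}=\mathcal{K}^{1,0,+}$ gives the sign information
\[
I(\varphi)=\|\varphi\|_{B_{1}}^{2}-\|\varphi\|_{L^{2\sigma+2}}^{2\sigma+2}\ge 0,
\]
equivalently $\|\varphi\|_{L^{2\sigma+2}}^{2\sigma+2}\le \|\varphi\|_{B_{1}}^{2}$. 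This is the only nonlinear information used in the proof.

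The upper bound is then immediate: since $\lambda=-1$, the nonlinear contribution to $S$ is nonpositive, so
\[
S(\varphi)=\tfrac{1}{2}\|\varphi\|_{B_{1}}^{2}-\tfrac{1}{2\sigma+2}\|\varphi\|_{L^{2\sigma+2}}^{2\sigma+2}\le \tfrac{1}{2}\|\varphi\|_{B_{1}}^{2}.
\]
For the lower bound, the natural move is to invoke the algebraic decomposition recorded in \eqref{Co1}. For $(a,b)=(1,0)$ one computes $\alpha_{1}=\alpha_{2}=\tfrac{\sigma}{2(\sigma+1)}$, so that $B^{1,0}(\varphi)=\tfrac{\sigma}{2(\sigma+1)}\|\varphi\|_{B_{1}}^{2}$ and
\[
S(\varphi)=B^{1,0}(\varphi)+\frac{1}{2\sigma+2}I(\varphi)=\frac{\sigma}{2(\sigma+1)}\|\varphi\|_{B_{1}}^{2}+\frac{1}{2\sigma+2}I(\varphi).
\]
Combined with $I(\varphi)\ge 0$, this yields the lower bound.

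There is no genuine obstacle here, the statement being a direct consequence of the variational structure already set up in Section~\ref{S:1}. The only subtlety worth flagging is the use of Lemma~\ref{L3}: the set $\mathcal{K}^{+}$ was \emph{defined} via the sign of $P=J^{1,-2/(d-n)}$, and it is only through the equality $\mathcal{K}^{+}=\mathcal{K}^{1,0,+}$ that we gain access to the cleaner condition $I(\varphi)\ge 0$, which is exactly what pairs with the decomposition $S=B^{1,0}+\frac{1}{2\sigma+2}I$ to produce both sides of the desired equivalence $S(\varphi)\sim\|\varphi\|_{B_{1}}^{2}$.
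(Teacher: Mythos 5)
Your argument is essentially the paper's: both pass through Lemma~\ref{L3} to replace the defining condition $P(\varphi)\geq 0$ with $I(\varphi)\geq 0$, and both then read off the bounds from the algebraic structure of $S$ in the focusing case; your identity $S=B^{1,0}+\tfrac{1}{2\sigma+2}I$ with $B^{1,0}(\varphi)=\tfrac{\sigma}{2(\sigma+1)}\|\varphi\|_{B_1}^2$ is a clean repackaging of the paper's direct substitution of $\|\varphi\|^{2\sigma+2}_{L^{2\sigma+2}}\leq\|\varphi\|^{2}_{B_1}$ into the expression for $S$.

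One discrepancy deserves a comment. Your computation yields the lower constant $\tfrac{\sigma}{2(\sigma+1)}$, whereas the lemma as stated has $\tfrac{\sigma}{\sigma+1}$. The stated constant cannot be right: once $\sigma>1$, which the hypotheses allow (e.g.\ $d=2$, $n=1$), one has $\tfrac{\sigma}{\sigma+1}>\tfrac{1}{2}$, so the two-sided inequality would force $\varphi=0$. The paper's own last step contains the same factor-of-$2$ slip, since $\tfrac{1}{2}-\tfrac{1}{2\sigma+2}=\tfrac{\sigma}{2(\sigma+1)}$, so your constant is the correct one. This does not affect the rest of the paper, where Lemma~\ref{L4} is only invoked for the two-sided equivalence $S(\varphi)\sim\|\varphi\|^2_{B_1}$.
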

\begin{proof}
 From Lemma \ref{L3} we see that $I(\varphi)$ and $P(\varphi)$ have the same sign under the condition $S(\varphi)<\beta$. Since $\varphi\in \mathcal{K}^{+}$, we obtain $I(\varphi)\geq 0$, which implies that 
\begin{equation*}
\|\varphi\|^{2\sigma+2}_{L^{2\sigma+2}}\leq \|\varphi\|^{2}_{B_{1}}.
\end{equation*}
Therefore, 
\begin{equation*}
\frac{1}{2}\|\varphi\|^{2}_{B_{1}}\geq S(\varphi)=\frac{1}{2}\|\varphi\|^{2}_{B_{1}}-\frac{1}{2\sigma+2}\|\varphi\|^{2\sigma+2}_{L^{2\sigma+2}}\geq \frac{\sigma}{\sigma+1}\|\varphi\|^{2}_{B_{1}},
\end{equation*}
 and the proof is complete.
\end{proof}

\begin{lemma} \label{L5}
If $\varphi\in \mathcal{K}^{-}$, then 
\begin{equation*}
P(\varphi)\leq -\frac{4}{d-n}(\beta-S(\varphi))
\end{equation*}
\end{lemma}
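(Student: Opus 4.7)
The plan is to exploit the scaling $\varphi^{r}(x)=r^{(d-n)/2}\varphi(y,rz)$, $r>0$, which preserves the $L^{2}$-norm and corresponds to the parameters $(a,b)=(1,-2/(d-n))$ in \eqref{Fm} (the degenerate case $2a+b(d-n)=0$). Setting $g(r):=S(\varphi^{r})$, the scaling identities recorded at the start of Section~\ref{S:1} yield
\begin{equation*}
g(r)=C_{\varphi}+\frac{r^{2}}{2}\|\nabla_{z}\varphi\|_{L^{2}}^{2}-\frac{r^{\alpha}}{2\sigma+2}\|\varphi\|_{L^{2\sigma+2}}^{2\sigma+2},
\end{equation*}
with $C_{\varphi}:=\tfrac{1}{2}(\|\nabla_{y}\varphi\|_{L^{2}}^{2}+\|y\varphi\|_{L^{2}}^{2}+\|\varphi\|_{L^{2}}^{2})$ and $\alpha:=\sigma(d-n)>2$. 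Differentiating produces the crucial identity
\begin{equation*}
g'(r)=\frac{d-n}{2r}P(\varphi^{r}),\qquad\text{so in particular}\qquad g'(1)=\frac{d-n}{2}P(\varphi)<0.
\end{equation*}

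Since $\alpha>2$, $g'(r)>0$ for $r$ small while $g'(1)<0$, so by continuity there is a unique $r_{0}\in(0,1)$ with $g'(r_{0})=0$, equivalently $P(\varphi^{r_{0}})=0$. As $\varphi^{r_{0}}\neq 0$, Remark~\ref{Remarp} gives $g(r_{0})=S(\varphi^{r_{0}})\geq \beta$. Using $g(1)=S(\varphi)$, the lemma will follow once we prove the quantitative comparison
\begin{equation*}
g(r_{0})-g(1)\leq -\tfrac{1}{2}g'(1)=-\tfrac{d-n}{4}P(\varphi),
\end{equation*}
since then $\beta-S(\varphi)\leq g(r_{0})-g(1)\leq -\tfrac{d-n}{4}P(\varphi)$.

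To establish this comparison, I plan to rewrite $P(\varphi^{r_{0}})=0$ as the algebraic relation $\tfrac{r_{0}^{\alpha}}{2\sigma+2}\|\varphi\|_{L^{2\sigma+2}}^{2\sigma+2}=\tfrac{r_{0}^{2}}{\alpha}\|\nabla_{z}\varphi\|_{L^{2}}^{2}$, and substitute it into the explicit expression
\begin{equation*}
g(r_{0})-g(1)=\tfrac{1}{2}(r_{0}^{2}-1)\|\nabla_{z}\varphi\|_{L^{2}}^{2}-\tfrac{1}{2\sigma+2}(r_{0}^{\alpha}-1)\|\varphi\|_{L^{2\sigma+2}}^{2\sigma+2}
\end{equation*}
to eliminate the $r_{0}^{\alpha}$ term in favor of the $r_{0}^{2}$ one. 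After collecting, a short algebraic manipulation produces
\begin{equation*}
-\tfrac{1}{2}g'(1)-\bigl(g(r_{0})-g(1)\bigr)=\frac{\alpha-2}{2(2\sigma+2)}\bigl(1-r_{0}^{\alpha}\bigr)\|\varphi\|_{L^{2\sigma+2}}^{2\sigma+2}\geq 0,
\end{equation*}
since $\alpha>2$ and $r_{0}<1$.

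The delicate point is the sharp prefactor $-(d-n)/4$: a direct integration-by-parts $g(r_{0})-g(1)=-g'(1)+\int_{r_{0}}^{1}rg''(r)\,dr$ combined with the sign $g''<0$ on $[r_{0},1]$ would only give $g(r_{0})-g(1)\leq -g'(1)$, yielding the suboptimal constant $-(d-n)/2$. Genuinely exploiting the Pohozaev-type constraint $P(\varphi^{r_{0}})=0$ to eliminate one of the two homogeneous quantities is what brings in the extra factor $\tfrac{1}{2}$ needed for the conclusion.
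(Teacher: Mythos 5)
Your proof is correct, and it follows the paper's blueprint: same one-parameter scaling family (you use the multiplicative parameter $r$, the paper uses the exponential parameter $\lambda$ via $\varphi^{1,-2/(d-n)}_\lambda$), same existence of a critical point $r_0\in(0,1)$ where $P$ vanishes, same appeal to Remark~\ref{Remarp} to get $S(\varphi^{r_0})\geq\beta$. The one genuinely different step is how the quantitative bound is extracted. The paper establishes and integrates the differential inequality $s''(\lambda)\leq\tfrac{4}{d-n}s'(\lambda)$ over $[\lambda_0,0]$, which in your $r$-variable reads $rg''(r)\leq g'(r)$; this gives directly $s'(0)-s'(\lambda_0)\leq\tfrac{4}{d-n}(s(0)-s(\lambda_0))$, hence the result. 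You instead substitute the Pohozaev constraint $P(\varphi^{r_0})=0$ into the explicit formula for $g(r_0)-g(1)$ and verify the sharp comparison algebraically; the identity you derive, $-\tfrac12 g'(1)-(g(r_0)-g(1))=\tfrac{\alpha-2}{2(2\sigma+2)}(1-r_0^\alpha)\|\varphi\|^{2\sigma+2}_{L^{2\sigma+2}}$, is exactly what integrating the paper's differential inequality produces (both vanish when $\alpha=\sigma(d-n)=2$). So the two derivations are equivalent in content; yours is more explicit, the paper's is slightly more conceptual. One small caveat on your closing remark: the paper does \emph{not} rely on the crude concavity $g''<0$, but on the sharper $s''\leq\tfrac{4}{d-n}s'$, so it already captures the factor you describe.
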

\begin{proof}
We consider $\varphi\in \mathcal{K}^{-}$. We put $s(\lambda):=S(\varphi^{1,-2/(d-n)}_{\lambda})$ (see \eqref{Fm}). Then
\begin{align}\nonumber
s(\lambda)&=\frac{1}{2}\|\nabla _{y} \varphi\|^{2}_{L^{2}}+\frac{e^{4\lambda/(d-n)} }{2}\|\nabla _{z} \varphi\|^{2}_{L^{2}}+\frac{1}{2}\|\varphi\|^{2}_{L^{2}}+\frac{1}{2}\|y\varphi\|^{2}_{L^{2}}-\frac{e^{2\sigma\lambda} }{2\sigma+2}\| \varphi\|^{2\sigma+2}_{L^{2\sigma+2}},\\ \label{S11}
s^{\prime}(\lambda)&=\frac{2}{d-n}e^{4\lambda/(d-n)} \|\nabla _{z} \varphi\|^{2}_{L^{2}}-\frac{\sigma}{\sigma+1}e^{2\sigma\lambda} \| \varphi\|^{2\sigma+2}_{L^{2\sigma+2}},\\ \label{S22}
s^{\prime\prime}(\lambda)&=\frac{8}{(d-n)^{2}}e^{4\lambda/(d-n)}  \|\nabla _{z} \varphi\|^{2}_{L^{2}}-\frac{2\sigma^{2}}{\sigma+1}e^{2\sigma\lambda} \| \varphi\|^{2\sigma+2}_{L^{2\sigma+2}}.
\end{align}
Thus, we infer
\begin{equation}\label{DI}
s^{\prime\prime}(\lambda)=\frac{2\sigma}{\sigma+1}\left(\frac{2}{d-n}-\sigma\right)e^{2\sigma\lambda}\| \varphi\|^{2\sigma+2}_{L^{2\sigma+2}}+\frac{4}{d-n}s^{\prime}(\lambda)\leq\frac{4}{d-n}s^{\prime}(\lambda),
\end{equation}
where we have used that $\sigma>2/(d-n)$. Since $P(\varphi)<0$ and
$s^{\prime}(\lambda)>0$ for small $\lambda<0$, then by  continuity,
there exists $\lambda_{0}<0$ such that $s^{\prime}(\lambda)<0$ for any
$\lambda\in (\lambda_{0},0]$ and $s^{\prime}(\lambda_{0})=0$. Since
$s(\lambda_{0})\geq \beta$ (see Remark~\ref{Remarp}),  integrating \eqref{DI}  over
$(\lambda_{0},0]$, we obtain 
\begin{align*}
P(\varphi)&=s^{\prime}(0)=s^{\prime}(0)-s^{\prime}(\lambda_{0})\leq \frac{4}{d-n}(s(0)-s(\lambda_{0}))\leq \frac{4}{d-n}(S(\varphi)-\beta),
\end{align*}
hence the result. 
\end{proof}

\section{Criteria for Global well-posedness and blow-up}\label{S:2} 

In this section we prove our global well-posedness and blow-up result,
that is, Theorem~\ref{Th1} up to the scattering part.

\begin{proof}[Proof of Theorem~\ref{Th1}] 
(i) Let $u_{0}\in \mathcal{K}^{+}$. Since the energy and the mass are conserved, we have 
\begin{equation}\label{cf}
u(t)\in \mathcal{K}^{+}\cup \mathcal{K}^{-}, \quad
\text{for every $t$ in the existence interval.} 
\end{equation}
Here $u(t)$ is the corresponding solution of \eqref{GP} with
$u(0)=u_{0}$. Assume that there  exists $t_{0}>0$ such that
$u(t_{0})\in \mathcal{K}^{-}$. Since the map
$t\mapsto P(u(t))$ is continuous, there exists $t_{1}\in (0,t_{0})$
such that $P(u(t))<0$ for all $t\in (t_{1}, t_{0})$ and
$P(u(t_{1}))=0$. Thus, by Remark~\ref{Remarp} we see that if
$u(t_{{1}})\neq 0$, then  $S(u(t_{{1}}))\geq \beta$. However,
by \eqref{cf} we have $S(u(t_{{1}}))< \beta$, which is a
absurd. Therefore, $u(t)\in \mathcal{K}^{+}$ for every $t$ in
the existence interval. Now, by Lemma \ref{L4} we obtain that
$\|u(t)\|_{B_{1}}\sim S(u(t))<\beta$ for every $t$. By the
local theory (Lemma~\ref{lem:LWP}),  this implies that $u$ is global and $u(t)\in
\mathcal{K}^{+}$ for every $t\in \mathbb{R}$. The scattering
result will be shown in Section~\ref{SC1}. \\

(ii)  Similarly as above, we can show that  if $u_{0}\in
\mathcal{K}^{-}$, then $u(t)\in \mathcal{K}^{-}$ for
every $t$ in the interval $[0, T_{+})$. If $T_{+}<+\infty$, by
the local theory (Lemma~\ref{lem:LWP}), we have $\lim_{t\rightarrow T_{+}}\|\nabla
_{x}u(t)\|^{2}_{L^{2}}=+\infty$. On the other hand, if
$T_{+}=+\infty$ we prove that there exists $t_k\to \infty$ such
that $\lim_{t_{k}\to \infty}\|\nabla
_{x}u(t_k)\|^{2}_{L^{2}}=+\infty$ by contradiction: suppose
\begin{equation*}
k_{0}:=\sup_{t\ge 0}\|\nabla _{x}u(t)\|_{L^{2}}<+\infty.
\end{equation*}
Now we consider the localized virial identity and define
\begin{equation}\label{virial1}
V(t):=\int_{\mathbb{R}^{d}}\phi(z)|u(t,x)|^{2}dx, \quad x=(y,z)\in \mathbb{R}^{n}\times \mathbb{R}^{d-n}.
\end{equation}
Let $\phi\in C^{4}(\mathbb{R}^{d-n})$. If $\phi$ is a radial function (that is, $\phi(z)=\phi(|z|)$), by direct computations we have
\begin{align}\label{V1}
V^{\prime}(t)&=2\text{Im}\,\int_{\mathbb{R}^{d}}\nabla_{z} \phi\cdot\nabla_{z} u \overline{u},\\\label{V2}
V^{\prime\prime}(t)&=4\int_{\mathbb{R}^{d}}\text{Re}\,\<\nabla_{z}
                     \overline{u} , \nabla_{z}^{2}\phi \nabla_{z} u \>
-\frac{2\sigma}{\sigma+1}\int_{\mathbb{R}^{d}}\Delta_{z} \phi |u|^{2\sigma+2}-\int_{\mathbb{R}^{d}}\Delta_{z}^{2}\phi|u|^{2}.
\end{align}
Before continuing the proof of Theorem \ref{Th1} we first state the following result:
\begin{lemma} \label{L8}
Let $\eta>0$. Then for all $t\leq \eta R/(4k_{0}\|u_{0}\|_{L^{2}})$ we have
\begin{equation}\label{CII}
\int_{|z|\geq R}|u(t,x)|^{2}dx\leq \eta+o_{R}(1).
\end{equation}
\end{lemma}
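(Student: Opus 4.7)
The plan is to bound the $L^2$ mass of $u(t)$ on $\{|z|\ge R\}$ by $V(t)$ for an appropriately chosen radial cutoff $\phi$, and then exploit the uniform $H^1$ bound $k_0<\infty$ (together with mass conservation) to control $V(t)-V(0)$ via the first-order virial identity \eqref{V1}. This is a standard argument: one just needs to check that the gain of $1/R$ coming from the derivative of the cutoff absorbs the factor $t$ precisely in the regime $t\lesssim R/(k_0\|u_0\|_{L^2})$.

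Concretely, I would fix a smooth radial function $\phi_0\colon \R^{d-n}\to [0,1]$ with $\phi_0(z)=0$ for $|z|\le 1/2$, $\phi_0(z)=1$ for $|z|\ge 1$, and $\|\nabla \phi_0\|_{L^\infty}\le 2$, and set $\phi(z):=\phi_0(z/R)$, so that $\phi\equiv 1$ on $\{|z|\ge R\}$, $\supp\phi\subset \{|z|\ge R/2\}$, and $\|\nabla\phi\|_{L^\infty}\le 2/R$. With this choice,
\begin{equation*}
  \int_{|z|\ge R}|u(t,x)|^2\,dx \le V(t)=V(0)+\int_0^t V'(s)\,ds.
\end{equation*}
Since $\supp \phi \subset \{|z|\ge R/2\}$ and $u_0\in L^2(\R^d)$, the first term satisfies $V(0)\le \int_{|z|\ge R/2}|u_0|^2\,dx=o_R(1)$ as $R\to\infty$.

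For the second term, formula \eqref{V1} combined with Cauchy--Schwarz and mass conservation gives
\begin{equation*}
  |V'(s)|\le 2\|\nabla\phi\|_{L^\infty}\|\nabla_z u(s)\|_{L^2}\|u(s)\|_{L^2}\le \frac{4\,k_0\,\|u_0\|_{L^2}}{R},
\end{equation*}
using $\|\nabla_z u(s)\|_{L^2}\le \|\nabla_x u(s)\|_{L^2}\le k_0$ and $\|u(s)\|_{L^2}=\|u_0\|_{L^2}$. Integrating on $[0,t]$,
\begin{equation*}
  \int_0^t V'(s)\,ds\le \frac{4\,k_0\,\|u_0\|_{L^2}\,t}{R}\le \eta
\end{equation*}
whenever $t\le \eta R/(4k_0\|u_0\|_{L^2})$. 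Adding the two bounds yields \eqref{CII}.

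I do not anticipate any real obstacle here: the whole argument is a Cauchy--Schwarz applied to the first-order virial identity, and the delicate point is purely bookkeeping, namely choosing $\phi$ so that $\|\nabla\phi\|_{L^\infty}\le 2/R$ produces the constant $4$ in the time threshold. The $o_R(1)$ term is entirely due to integrability of $|u_0|^2$ at infinity in $z$, independent of the (assumed uniform in time) kinetic bound.
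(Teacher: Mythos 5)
Your proposal is correct and follows essentially the same route as the paper: dominate the mass on $\{|z|\ge R\}$ by $V(t)$ for a cutoff $\phi$ supported in $\{|z|\ge R/2\}$ and equal to $1$ on $\{|z|\ge R\}$, use the first-order virial identity \eqref{V1} together with Cauchy--Schwarz, mass conservation and the assumed uniform kinetic bound $k_0$ to control $\int_0^t V'(s)\,ds$, and observe $V(0)=o_R(1)$ by dominated convergence. The only (immaterial) quibble is that a genuinely smooth $\phi_0$ transitioning from $0$ on $\{|z|\le 1/2\}$ to $1$ on $\{|z|\ge 1\}$ cannot quite satisfy $\|\nabla\phi_0\|_{L^\infty}\le 2$; however, any fixed constant bound on the gradient suffices since $\eta>0$ is arbitrary and one only needs the $1/R$ decay of $\|\nabla\phi\|_{L^\infty}$ — the precise constant $4$ in the time threshold is not essential to how the lemma is used.
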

\begin{proof}
Fix $R>0$,  and take $\phi$ in \eqref{virial1} such that
\begin{equation*}
\phi(r)=
\begin{cases} 
0,\quad 0\leq |z|\leq \frac{R}{2};\\
1,\quad |z|\geq {R},
\end{cases} 
\end{equation*}
where $r=|z|$ and
\begin{equation*}
0\leq \phi\leq 1, \quad  0\leq \phi^{\prime}\leq \frac{4}{R}.
\end{equation*}
From \eqref{V1} we infer that
\begin{align*}
 V(t)&=V(0)+\int^{t}_{0}V^{\prime}(s)ds\leq V(0)+t\|\phi^{\prime}\|_{L^{\infty}}\|u_{0}\|_{L^{2}}k_{0}\\
&\leq \int_{|z|\geq R/2}|u_{0}(x)|^{2}dx+\frac{4\|u_{0}\|_{L^{2}}k_{0}}{R}t.
\end{align*}
Moreover, Lebesgue's dominated convergence theorem yields
\begin{equation*}
\int_{|z|\geq R/2}|u_{0}(x)|^{2}dx=o_{R}(1),
\end{equation*}
and
\begin{equation*}
\int_{|z|\geq R}|u(t,x)|^{2}dx\leq V(t).
\end{equation*}
Therefore for given $\eta>0$, if
\begin{equation*}
t\leq \frac{\eta R}{4k_{0}\|u_{0}\|_{L^{2}}},
\end{equation*}
then we see that
\begin{equation*}
\int_{|z|\geq R}|u(t,x)|^{2}dx\leq \eta+o_{R}(1).
\end{equation*}
This concludes the proof of the lemma.
\end{proof}

Next we choose another function $\phi$ in \eqref{virial1} such that
\begin{equation*}
\phi(r)=
\begin{cases} 
r^{2},\quad 0\leq r\leq {R};\\
0,\quad r\geq 2R,
\end{cases} 
\end{equation*}
with
\begin{equation*}
0\leq \phi\leq r^{2}, \quad   \phi^{\prime\prime}\leq2, \quad
\phi^{(4)}\leq \frac{4}{R^{2}}. 
\end{equation*}
By \eqref{V1}, $V^{\prime}(t)$ and $V^{\prime\prime}(t)$ can be rewritten as
\begin{align}\label{LV1}
V^{\prime}(t)&=2\text{Im}\,\int_{\mathbb{R}^{d}}\frac{\phi^{\prime}(r)}{r}z\cdot\nabla_{z} u \overline{u},\\\label{LV2}
V^{\prime\prime}(t)&=4\int_{\mathbb{R}^{d}}\frac{\phi^{\prime}}{r}|\nabla_{z} u|^{2} +4\int_{\mathbb{R}^{d}}\left(\frac{\phi^{\prime\prime}}{r^{2}}-\frac{\phi^{\prime}}{r^{3}}\right)|z\cdot \nabla_{z}u|^{2}\\\nonumber
&-\frac{2\sigma}{\sigma+1}\int_{\mathbb{R}^{d}}\(\phi^{\prime\prime}+(d-n-1)\frac{\phi^{\prime}}{r}\) |u|^{2\sigma+2}-\int_{\mathbb{R}^{d}}\Delta_{z}^{2}\phi|u|^{2}\\\label{LV3}
&=4(d-n)P(u)+R_{1}+R_{2}+R_{3},
\end{align}
where 
\begin{equation}\label{R11}
\begin{aligned}
 &R_{1}=4\int_{\mathbb{R}^{d}}\left(\frac{\phi^{\prime\prime}}{r}-2\right)|\nabla_{z} u|^{2} +4\int_{\mathbb{R}^{d}}\left(\frac{\phi^{\prime\prime}}{r^{2}}-\frac{\phi^{\prime}}{r^{3}}\right)|z\cdot \nabla_{z}u|^{2}\\
&R_{2}=-\frac{2\sigma}{\sigma+1}\int_{\mathbb{R}^{d}}\(\phi^{\prime\prime}+(d-n-1)\frac{\phi^{\prime}}{r}-2(d-n)\) |u|^{2\sigma+2},\\
&R_{3}=-\int_{\mathbb{R}^{d}}\Delta_{z}^{2}\phi|u|^{2}.
\end{aligned}
\end{equation}
First we show that $R_{1}\leq 0$. Indeed, we can decompose
$\mathbb{R}^{d}$ into
\begin{equation*}
  \R^d = \underbrace{\left\{\phi^{\prime\prime}/r^{2}-\phi^{\prime}/r^{3}\leq
    0\right\}}_{=: \Omega_1}\cup \underbrace{\left\{\phi^{\prime\prime}/r^{2}-\phi^{\prime}/r^{3}>  0\right\}}_{=: \Omega_2}.
\end{equation*}
On $\Omega_1$, since $\phi^{\prime}\leq 2r$,
\begin{equation*}
  4\int_{\Omega_1}\left(\frac{\phi^{\prime\prime}}{r}-2\right)|\nabla_{z}
  u|^{2}
  +4\int_{\Omega_1}\left(\frac{\phi^{\prime\prime}}{r^{2}}-\frac{\phi^{\prime}}{r^{3}}\right)|z\cdot
  \nabla_{z}u|^{2} \le 0.
\end{equation*}
On $\Omega_2$, 
\begin{equation*}
 \int_{\Omega_2}\left(\frac{\phi^{\prime\prime}}{r}-2\right)|\nabla_{z}
  u|^{2}
  +\int_{\Omega_2}\left(\frac{\phi^{\prime\prime}}{r^{2}}-\frac{\phi^{\prime}}{r^{3}}\right)|z\cdot
  \nabla_{z}u|^{2} \leq \int_{\mathbb{R}^{d}}(\phi^{\prime\prime}-2)|\nabla_{z}u|^{2}dx\leq 0.
\end{equation*}
Secondly, notice that $\text{supp}\chi\subset [R,\infty)$, where
\begin{equation*}
\chi(r)=\left|\phi^{\prime\prime}(r)+(d-n-1)\frac{\phi^{\prime}(r)}{r}-2(d-n)\right|.
\end{equation*}
For $2\si+2<q<\tfrac{2d}{d-2}$, there exists
$0<\theta< 1$ such that
$\frac{1}{2\sigma+2}=\frac{1-\theta}{q}+\frac{\theta}{2}$, and 
\begin{equation}\label{A2}
R_{2}^{\frac{1}{2\si+2}}\lesssim \|
u\|_{L^{2\sigma+2}(|z|>R)}
\le\|u\|^{1-\theta}_{L^{q}(|z|>R)}\|u\|^{\theta}_{L^{2}(|z|>R)}\lesssim
k^{1-\theta}_{0}\| u\|^{\theta}_{L^{2}(|z|> R)}.
\end{equation}
Finally, 
\begin{equation}\label{A3}
R_{3}\leq CR^{-2}\| u\|^{2}_{L^{2}(|z|> R)}.
\end{equation}
Combining \eqref{LV3}, \eqref{A2} and \eqref{A3} we obtain
\begin{equation}\label{A4}
V^{\prime\prime}(t)\leq
4(d-n)P(u(t))+C\|u\|^{(2\si+2)\theta}_{L^{2}(|z|> R)}+ CR^{-2}\| u\|^{2}_{L^{2}(|z|> R)} ,
\end{equation}
where $C>0$  depends only on $\|u_{0}\|_{L^{2}}$, $k_{0}$ and $\sigma$.  By Lemma \ref{L8} we obtain that for all $t\leq T:= \eta R/(4k_{0}\|u_{0}\|^{2}_{L^{2}})$,
\begin{equation*}
V^{\prime\prime}(t)\leq 4(d-n)P(u(t))+C\(\eta^{(2\si+2)\theta}+\eta^2+o_{R}(1)\),
\end{equation*}
and since $u(t)\in \mathcal{K}^{-}$, Lemma~\ref{L5} yields
$P(u(t))\leq-\tfrac{4}{d-n}(\beta-S(u_{0}))<0$. Thus, 
\begin{equation}\label{DDa}
V^{\prime\prime}(t)\leq -16(\beta-S(u_{0})) +C\(\eta^{(2\si+2)\theta}+\eta^2+o_{R}(1)\).
\end{equation}
Integrating \eqref{DDa} from $0$ to $T$ we infer
\begin{align*}
V(T)\leq V(0)+V^{\prime}(0)T +\(-16(\beta-S(u_{0}))
  +C\(\eta^{(2\si+2)\theta}+\eta^2+o_{R}(1)\)\)T^2 .
\end{align*}
Choosing $\eta$ sufficiently small and taking $R$ large enough, it follows that for $T= \eta R/(4k_{0}\|u_{0}\|_{L^{2}})$ we have
\begin{equation*}
-16(\beta-S(u_{0}))
  +C\(\eta^{(2\si+2)\theta}+\eta^2+o_{R}(1)\) < -8 (\beta-S(u_{0})),
\end{equation*}
and 
\begin{equation*}
V(T)\leq V(0)+V^{\prime}(0)\frac{\eta R}{4k_{0}\|u_{0}\|_{L^{2}}}+\mu_{0}R^{2},
\end{equation*}
where
\begin{equation*}
\mu_{0}=-\frac{(\beta-S(u_{0}))\eta^2}{2 k_0^2\|u_0\|_{L^2}^2}<0.
\end{equation*}
Next notice that we have $V(0)\leq o_{R}(1)R^{2}$ and $V^{\prime}(0)\leq o_{R}(1)R$. Indeed,
\begin{align*}
V(0)&\leq \int_{|z|< \sqrt{R}}|z|^{2}|u_{0}(x)|^{2}dx+\int_{\sqrt{R}<|z|< 2R}|z|^{2}|u_{0}(x)|^{2}dx\\
&\leq R\|u_{0}\|^{2}_{L^{2}}+4R^{2}\int_{|z|>\sqrt{R}}|u_{0}(x)|^{2}dx\\
&= o_{R}(1)R^{2}.
\end{align*}
Moreover,
\begin{align*}
V^{\prime}(0)&\leq \int_{|z|<
               \sqrt{R}}|z||u_{0}|\lvert\nabla_{z}u_{0}\rvert dx+\int_{\sqrt{R}<|z|< 2R}|z||u_{0}||\nabla_{z}u_{0}| dx\\
&\leq \sqrt{R}\|u_{0}\|^{2}_{H^{1}}+2R\int_{|z|>\sqrt{R}}|u_{0}||\nabla_{z}u_{0}| dx\\
&= o_{R}(1)R.
\end{align*}
Thus we get
\begin{equation*}
V(T)\leq (o_{R}(1)+\mu_{0})R^{2},
\end{equation*}
and for $R$ sufficiently enough, $o_{R}(1)+\mu_{0}<0$, which is a contradiction since $V(T)>0$. The proof of Theorem \ref{Th1} is now
complete.
\end{proof}

\section{Proof of the scattering result}\label{SC1}
In Section \ref{S:2} we showed that if $u_{0}\in \mathcal{K}^{+}$, then the solution is global and belongs  to $\mathcal{K}^{+}$ for all $t\in \mathbb{R}$. In this section we show that under this condition, the solution scatters in $B_{1}$.

\subsection{Small data scattering}\label{sds1}

We begin with some lemmas complementing the results of
Section~\ref{sec:scattering1}. Recall that the indices considered here
were introduced in Section~\ref{sec:indices}. The first lemma covers
both the Cauchy problem ($t_0\in \R$) and the existence of wave
operators ($|t_0|=\infty$).

\begin{lemma}[Small data scattering]\label{ssp} Suppose
  $\tfrac{2}{d-n}\le \si<\tfrac{2}{d-2}$, $\lambda\in \{-1,1\}$. Let $\varphi\in
  B_{1}$. There exists $\delta>0$ 
  such that  if
  $\|e^{-itH}\varphi\|_{\ell_{\gamma}^{{p}}L^{q}L^{r}}\leq
  \delta$,  then for all $t_0\in [-\infty,\infty]$, the solution $u$ to
  \begin{equation}\label{eq:Duhamel-gen}
    u(t) = e^{-itH} \varphi -i\lambda \int_{t_0}^t e^{-i(t-s)H}\(|u|^{2\si}u\)(s)ds
  \end{equation}
  is global for both positive and negative times, and
  satisfies 
\[  \|u\|_{\ell_{\gamma}^{{p}}L^{q}L^{r}}
  \leq
  2 \|e^{-itH}\varphi\|_{\ell_{\gamma}^{{p}}L^{q}L^{r}}.  \]
There exists $\nu>0$ such that if
  $\|\varphi\|_{B_1}\leq
  \nu$,  then $\|e^{-itH}\varphi\|_{\ell_{\gamma}^{{p}}L^{q}L^{r}}\leq
  \delta$, and 
  for all $t_0\in [-\infty,\infty]$, the solution $u$ to
  \eqref{eq:Duhamel-gen}
  is global for both positive and negative times, and
  satisfies 
\[  \|u\|_{B_1}
  \leq
  2 \|\varphi\|_{B_1}.  \]
\end{lemma}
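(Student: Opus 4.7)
The plan is a standard small-data contraction argument, with the only subtlety being the bookkeeping for the mixed norms $\ell^p_\gamma L^q L^r$ and the verification that the exponents chosen in Lemma~\ref{lem:indices} fit perfectly with the Strichartz estimates of Lemmas~\ref{Ls1} and~\ref{Ls22}. For the first assertion, I would set
\[
\Phi(u)(t) := e^{-itH}\varphi - i\lambda \int_{t_0}^t e^{-i(t-s)H}\(|u|^{2\sigma}u\)(s)\,ds,
\]
and look for a fixed point in the complete metric space
\[
X_\delta := \bigl\{ u \in \ell^p_\gamma L^q L^r : \|u\|_{\ell^p_\gamma L^q L^r} \le 2\delta \bigr\},
\]
equipped with the distance induced by the $\ell^p_\gamma L^q L^r$ norm.

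The main step is to apply the inhomogeneous Strichartz estimate (Lemma~\ref{Ls22}) with the non-admissible pairs $(p,q,r)$ and $(\tilde p, \tilde q, r)$ -- which Lemma~\ref{lem:indices} has verified to satisfy the required conditions -- and combine it with H\"older's inequality in space, time, and in $\gamma$. The crucial algebraic identities are
\[
\frac{1}{r'} = \frac{2\sigma+1}{r}, \qquad \frac{1}{\tilde q'} = \frac{2\sigma+1}{q}, \qquad \frac{1}{\tilde p'} = \frac{2\sigma+1}{p},
\]
which follow from the explicit values in Lemma~\ref{lem:indices} (the first is immediate since $r=2\sigma+2$; the other two are a direct computation). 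Granted these, H\"older yields
\[
\bigl\| |u|^{2\sigma}u\bigr\|_{\ell^{\tilde p'}_\gamma L^{\tilde q'} L^{r'}} \le \|u\|^{2\sigma+1}_{\ell^p_\gamma L^q L^r},
\]
and Lemma~\ref{Ls22} combined with Lemma~\ref{axc} produces the master estimate
\[
\|\Phi u\|_{\ell^p_\gamma L^q L^r} \le C\|e^{-itH}\varphi\|_{\ell^p_\gamma L^q L^r} + C\|u\|^{2\sigma+1}_{\ell^p_\gamma L^q L^r},
\]
together with its analogous difference version. Choosing $\delta$ with $C\delta + C(2C\delta)^{2\sigma+1}\le 2C\delta$ and a similar contraction condition produces a unique fixed point $u\in X_\delta$, satisfying the claimed bound $\|u\|_{\ell^p_\gamma L^q L^r}\le 2\|e^{-itH}\varphi\|_{\ell^p_\gamma L^q L^r}$.

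For the $B_1$-control in the second assertion, I would propagate the four vector fields $A\in\{\mathrm{Id}, A_1(t), A_2(t), \nabla_z\}$ which commute with $i\partial_t - H$ and satisfy the pointwise estimate $|A(|u|^{2\sigma}u)|\lesssim |u|^{2\sigma}|Au|$. Using the (homogeneous) admissible triplet $(p_0,q_0,r)$ and the Strichartz estimate of Lemma~\ref{Ls1}, together with the identities $1/p_0' = 1/p_0 + 2\sigma/p$ and $1/q_0' = 1/q_0 + 2\sigma/q$ already used in the proof of Lemma~\ref{Ls2}, I would obtain
\[
\|Au\|_{\ell^{p_0}_\gamma L^{q_0} L^r} \le C\|A(0)\varphi\|_{L^2} + C\|u\|^{2\sigma}_{\ell^p_\gamma L^q L^r}\|Au\|_{\ell^{p_0}_\gamma L^{q_0} L^r},
\]
and a bootstrap (valid since $\|u\|_{\ell^p_\gamma L^q L^r}\le 2\delta$ is small) yields $\|Au\|_{\ell^{p_0}_\gamma L^{q_0} L^r} \lesssim \|\varphi\|_{B_1}$. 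Finally, using the endpoint pair $(\infty,2,r)$ on the left in Strichartz, one gets $\|A(t)u(t)\|_{L^\infty_t L^2_x} \le \|\varphi\|_{B_1}(1+C\delta^{2\sigma})$, and summing over the four choices of $A$ via \eqref{eq:vector-norm} gives $\|u\|_{L^\infty_t B_1} \le 2\|\varphi\|_{B_1}$ for $\delta$ (equivalently $\nu$) small enough. The passage from the assumption $\|\varphi\|_{B_1}\le \nu$ to $\|e^{-itH}\varphi\|_{\ell^p_\gamma L^q L^r}\le \delta$ is exactly Lemma~\ref{axc}.

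The argument is entirely standard once the mixed-norm Strichartz machinery is in place; the only genuinely delicate point is the verification of the exponent identities above, and these reduce to the explicit formulas chosen in Lemma~\ref{lem:indices}. The case $|t_0|=\infty$, corresponding to the existence of wave operators, presents no additional difficulty since Lemma~\ref{Ls22} is formulated directly for global-in-time Duhamel integrals.
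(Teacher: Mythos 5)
Your proposal is correct and follows essentially the same contraction-mapping strategy as the paper, with the same choice of exponents, the same use of Lemmas~\ref{Ls22},~\ref{lem:indices},~\ref{axc}, and the same propagation of the vector fields $\{\mathrm{Id}, A_1, A_2, \nabla_z\}$ for the $B_1$-control. The only organizational difference is that the paper runs the fixed point directly in a space $X$ that bundles the scattering norm with the $\ell^{p_0}_\gamma L^{q_0}L^r$ bounds on all four vector fields (equipped with the $\ell^p_\gamma L^q L^r$ metric), thereby avoiding the separate persistence-of-regularity bootstrap you sketch at the end; also note that the linear term $e^{-itH}\varphi$ enters $\Phi(u)$ without a multiplicative constant, which is what gives the precise bound $\|u\|_{\ell^p_\gamma L^q L^r}\le 2\|e^{-itH}\varphi\|_{\ell^p_\gamma L^q L^r}$ rather than a constant multiple of $\delta$.
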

\begin{proof}
  Denote by
  \begin{equation*}
    \Phi(u)(t) :=  e^{-itH} \varphi -i\lambda \int_{t_0}^t
    e^{-i(t-s)H}\(|u|^{2\si}u\)(s)ds. 
  \end{equation*}
  First, consider
  \begin{align*}
    X=\Big\{& u \in C(\R;B_1);\ \|u\|_{\ell_{\gamma}^{{p}}L^{q}L^{r}}
  \leq
    2 \|e^{-itH}\varphi\|_{\ell_{\gamma}^{{p}}L^{q}L^{r}},\\
   &\forall A\in \{{\rm
    Id},A_1,A_2,\nabla_z\},\ \|Au\|_{\ell^{p_0}_{\gamma}L^{q_0}L^{p_0}}\le
     2 C_0\|A \varphi\|_{L^2} 
\Big\},
\end{align*}
 where $C_0$ is the constant associated to the homogeneous Strichartz estimate
 \eqref{Sz}  ($F=0$) in the case $(p_1,q_1,r_1)=(p_0,q_0,r)$.
Let $u\in X$. In view of the
inhomogeneous Strichartz estimates (Lemmas~\ref{Ls22} and
\ref{lem:indices}), and since
\begin{equation*}
p=(2\sigma+1)\tilde{p}',\quad   q=(2\sigma+1)\tilde{{q}}',\quad 
 r=(2\sigma+1)r',
\end{equation*}
we have
\begin{align*}
  \|\Phi(u)\|_{\ell^p_{\gamma}L^qL^r}& \le
  \|e^{-itH}\varphi\|_{\ell_{\gamma}^{p}L^{q}L^{r}} +
 C \left\| |u|^{2\si}u\right\|_{\ell^{\tilde p'}_{\gamma}L^{\tilde q'}L^{r'}}\\
  &\le
      \|e^{-itH}\varphi\|_{\ell_{\gamma}^{p}L^{q}L^{r}} + C\|
        u\|_{\ell^p_\gamma L^qL^r}^{2\si+1}.
\end{align*}
For $\delta>0$ sufficiently small, the right hand side
does not exceed $2\delta$.
\smallbreak

Reproducing the estimates of the
proof of Lemma~\ref{Ls2}, for  $A\in \{{\rm Id},A_1,A_2,\nabla_z\}$,
\begin{equation*}
  \|A\Phi(u)\|_{\ell^{p_0}_\gamma
  L^{q_0}L^{p_0}}\le C_0 \|A\varphi\|_{L^2} + C_1
  \|u\|_{\ell^p_\gamma L^qL^r}^{2\si}\|Au\|_{\ell^{p_0}_\gamma
L^{q_0}L^{p_0}}. 
\end{equation*}
Up to choosing $\delta>0$ smaller, we infer
\begin{equation*}
   \|A\Phi(u)\|_{\ell^{p_0}_\gamma
  L^{q_0}L^{p_0}}\le 2 C_0 \|A\varphi\|_{L^2} ,
\end{equation*}
and so $\Phi$ maps $X$ to itself. We equip $X$ with the metric
\begin{equation*}
  d(u,v) = \|u-v\|_{\ell^p_\gamma L^qL^r}
\end{equation*}
which makes it a complete space (see e.g. \cite{CB}). We then have
\begin{align*}
  \left\| \Phi(u)-\Phi(v)\right\|_{\ell^p_\gamma L^qL^r}& \lesssim
  \left\| |u|^{2\si}u-|v|^{2\si}v\right\|_{\ell^{\tilde p'}_\gamma
                                                          L^{\tilde q'}L^{r'}} \\
  &\lesssim  \left\|
  \(|u|^{2\si}+|v|^{2\si}\)(u-v)\right\|_{\ell^{\tilde p'}_\gamma 
   L^{\tilde q'}L^{r'}} \\
  &\lesssim \( \|u\|_{\ell^p_\gamma L^qL^r}^{2\si} +
  \|v\|_{\ell^p_\gamma L^qL^r}^{2\si}         \)\|u-v\|_{\ell^p_\gamma
    L^qL^r},                    \end{align*}
so contraction follows, up to choosing $\delta>0$ smaller, hence the
first part of the lemma.

For the second part, note that in view of Lemma~\ref{axc}, for $\nu>0$
sufficiently small, $\|e^{-itH}\varphi\|_{\ell_{\gamma}^{{p}}L^{q}L^{r}}\leq
  \delta$, and we may use the first part of the lemma. Strichartz
  estimates also yield, for $A\in \{{\rm Id},A_1,A_2,\nabla_z\}$,
\begin{equation*}
  \|Au\|_{L^\infty_tL^2}\le  \|A\varphi\|_{L^2} + C_2
  \|u\|_{\ell^p_\gamma L^qL^r}^{2\si}\|Au\|_{\ell^{p_0}_\gamma
L^{q_0}L^{p_0}}. 
\end{equation*}
Up to choosing $\delta>0$ smaller, we infer
\begin{equation*}
  \|Au\|_{L^\infty_tL^2}\le  2\|A\varphi\|_{L^2} ,
\end{equation*}
hence the second part of the lemma, from \eqref{eq:vector-norm}.
\end{proof}

We now go back to the focusing case, $\lambda=-1$.

\begin{lemma}[Wave operators for not so small data]\label{Wape} Suppose
  $\tfrac{2}{d-n}\le \si<\tfrac{2}{d-2}$. Let
  $\psi\in B_{1}$ such that
\begin{equation}\label{Imi111}
\frac{1}{2}\|\nabla_{x} \psi\|^{2}_{L^{2}}+\frac{1}{2}\|y \psi\|^{2}_{L^{2}}+\frac{1}{2}\|\psi\|^{2}_{L^{2}}<\beta,
\end{equation}
where $\beta$ is given by \eqref{mpp}.  Then there exists $u_{0}\in
\mathcal K^{+}$ such that the corresponding solution $u(t)$ of
\eqref{GP} with $u(0)=u_{0}$ satisfies
\begin{equation*}
\|e^{itH}u(t)-\psi\|_{B_{1}}\Tend t\infty 0.
\end{equation*}
\end{lemma}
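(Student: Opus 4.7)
The plan is to carry out the standard wave operator construction with scattering data $\psi$ at $+\infty$, and then invoke the global existence part of Theorem \ref{Th1}(i), already established in Section \ref{S:2}, to pull the solution back to $t=0$.

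\textbf{Step 1: Fixed point on a late-time tail.} By Lemma \ref{axc}, $\|e^{-itH}\psi\|_{\ell^{p}_{\gamma}L^{q}L^{r}}\lesssim \|\psi\|_{B_{1}}<\infty$, so the tails $\|e^{-itH}\psi\|_{\ell^{p}_{\gamma\geq\gamma_{0}}L^{q}L^{r}}$ tend to $0$ as $\gamma_{0}\to\infty$. Pick $\gamma_{0}$ so large that this tail is below the threshold $\delta$ of Lemma \ref{ssp}, and set $T=\pi\gamma_{0}$. I would then run the contraction of Lemma \ref{ssp} with $t_{0}=+\infty$ on the space
\begin{equation*}
X=\Bigl\{u\in C([T,\infty);B_{1}):\ \|u\|_{\ell^{p}_{\gamma\geq\gamma_{0}}L^{q}L^{r}}\leq 2\|e^{-itH}\psi\|_{\ell^{p}_{\gamma\geq\gamma_{0}}L^{q}L^{r}},\ \|Au\|_{\ell^{p_{0}}_{\gamma\geq\gamma_{0}}L^{q_{0}}L^{r}}\leq 2C_{0}\|A\psi\|_{L^{2}}\Bigr\}
\end{equation*}
for $A\in\{\mathrm{Id},A_{1},A_{2},\nabla_{z}\}$. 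This produces $u\in C([T,\infty);B_{1})$ solving $i\partial_{t}u=Hu-|u|^{2\sigma}u$ together with the Duhamel identity with data at $+\infty$, and the tail Strichartz estimate for $A(t)(u-e^{-itH}\psi)$ combined with \eqref{eq:vector-norm} yields $\|e^{itH}u(t)-\psi\|_{B_{1}}\to 0$ as $t\to\infty$.

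\textbf{Step 2: Check $u(T)\in\mathcal{K}^{+}$ for $T$ large.} Since $u$ solves the equation on $[T,\infty)$, the action is conserved there (Lemma \ref{lem:LWP}), hence $S(u(T))=\lim_{t\to\infty}S(u(t))$. Using that $e^{-itH}$ is an isometry on $B_{1}$ and that $u(t)-e^{-itH}\psi\to 0$ in $B_{1}$, the quadratic part of $S(u(t))$ converges to $\tfrac{1}{2}(\|\nabla_{x}\psi\|_{L^{2}}^{2}+\|y\psi\|_{L^{2}}^{2}+\|\psi\|_{L^{2}}^{2})$, while the $L^{2\sigma+2}$ part vanishes: by Sobolev embedding $B_{1}\hookrightarrow L^{2\sigma+2}$ we can replace $u(t)$ by $e^{-itH}\psi$ with vanishing error, and Lemma \ref{lem:dispLp} gives $\|e^{-itH}\psi\|_{L^{2\sigma+2}}\to 0$. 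Hypothesis \eqref{Imi111} then forces $S(u(T))<\beta$. The same reasoning applied to $P$, using that $\nabla_{z}$ commutes with $e^{-itH}$, shows $P(u(T))\to \tfrac{2}{d-n}\|\nabla_{z}\psi\|_{L^{2}}^{2}\geq 0$; since any $\varphi\in B_{1}$ with $\nabla_{z}\varphi=0$ vanishes (integrability in $z$), we may assume $\psi\neq 0$ implies $\nabla_{z}\psi\neq 0$, so $P(u(T))>0$ for $T$ large. The case $\psi=0$ is trivial by taking $u_{0}=0$.

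\textbf{Step 3: Extension to $t=0$.} The global existence part of Theorem \ref{Th1}(i), already established in Section \ref{S:2} via the invariance of $\mathcal{K}^{+}$ and Lemma \ref{L4}, applies as well backwards in time: since $u(T)\in\mathcal{K}^{+}$, the solution extends to all $t\in\mathbb{R}$ with $u(t)\in\mathcal{K}^{+}$. Setting $u_{0}:=u(0)$ yields the desired initial datum, and the scattering property was built in at Step 1.

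\textbf{Main obstacle.} The delicate point is Step 1: since $\psi$ is only constrained by the subthreshold condition \eqref{Imi111} and not by smallness in $B_{1}$, one cannot directly invoke Lemma \ref{ssp}; one must instead exploit Lemma \ref{axc} to make the relevant $\ell^{p}_{\gamma\geq\gamma_{0}}L^{q}L^{r}$-tail small and adapt the fixed point to a late-time half-line with data prescribed at $+\infty$. The remaining steps are then essentially variational bookkeeping using results already proved.
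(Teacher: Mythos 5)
Your proposal is correct and follows essentially the same route as the paper: construct $u$ on a late half-line $[T,\infty)$ by a fixed point in the same weighted Strichartz spaces as Lemma~\ref{ssp}, use the $B_1$-isometry of $e^{-itH}$ and Lemma~\ref{lem:dispLp} to compute $\lim_{t\to\infty}S(u(t))$ and $\lim_{t\to\infty}$ of the sign functional, conclude $u(t^*)\in\mathcal K^+$ for $t^*$ large via hypothesis~\eqref{Imi111}, and then pull back to $t=0$ by the flow-invariance of $\mathcal K^+$. The only cosmetic difference is that you check membership in $\mathcal K^+$ directly via $P(u(t))\to\tfrac{2}{d-n}\|\nabla_z\psi\|_{L^2}^2>0$, whereas the paper computes $I(u(t))\to\|\psi\|_{B_1}^2>0$ and relies implicitly on Lemma~\ref{L3} to identify $\mathcal K^{1,0,+}$ with $\mathcal K^+$; both are valid and equivalent.
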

\begin{proof}
Consider the integral equation
\begin{equation}\label{Iee}
u(t)=e^{-itH}\psi-i\int^{+\infty}_{t}e^{-i(t-s)H}(|u|^{2\sigma}u)(s)ds=:\Phi(u)(t).
\end{equation}
We first construct a
solution defined on $[T,\infty)$ for $T\gg 1$ by a fixed point
argument similar to the one employed in the proof of Lemma~\ref{ssp}. Introduce
\begin{align*}
  X_T=\Big\{ &u\in C([\pi (T-1),\infty);B_1);\ 
  \|u\|_{\ell^p_{\gamma\ge
      T}L^qL^r}\le 2\|e^{-itH}\psi\|_{\ell_{\gamma\ge
               T}^{{p}}L^{q}L^{r}} ,\\
  &\sum_{A\in \{{\rm
    Id},A_1,A_2,\nabla_z\}}\|Au\|_{\ell^{p_0}_{\gamma\ge
    T}L^{q_0}L^{p_0}}\le 2 C_0\|\psi\|_{B_1}
\Big\},
\end{align*}
where $C_0$ is the constant associated to the Strichartz estimate
\eqref{Sz} in the case $(p_1,q_1,r_1)=(p_0,q_0,r)$. 
By Lemma~\ref{axc},
$\|e^{-itH}\psi\|_{\ell_{\gamma\ge T}^{{p}}L^{q}L^{r}}\to 0$ as $T\to
\infty$. Therefore, choosing $T$ sufficiently large is equivalent to
requiring $\delta$ sufficiently small in the proof of
Lemma~\ref{ssp}. The proof is then the same, and we omit it.
We must now prove that the solution $u$ is
defined for all time.
\smallbreak

Since $e^{-itH}$ conserves the linear energy and $\|
e^{-itH}\psi\|_{L^{2\sigma+2}}^{2\sigma+2}\to 0$ as $
t\rightarrow\infty$ (see Lemma~\ref{lem:dispLp}), we have
\begin{gather*}
S(u(t))=\lim_{t\rightarrow\infty}S(e^{-itH}\psi)=\frac{1}{2}\|\nabla\psi\|_{L^{2}}^{2}+\frac{1}{2}\| y\psi\|_{L^{2}}^{2}+\frac{1}{2}\|\psi\|_{L^{2}}^{2}<\beta,\\
\lim_{t\rightarrow\infty}I(u(t))=\lim_{t\rightarrow\infty}\(\|
e^{-itH}\psi\|_{B_{1}}^2
-\|e^{-itH}\psi\|_{L^{2_{\sigma}+2}}^{2\sigma+2}\)=\|\psi\|_{B_{1}}^2 >0.
\end{gather*}
Thus, there exists $t^{*}$ sufficiently large such that $u(t^{*})\in
\mathcal{K}^{+}$. By using the fact that
$\mathcal{K}^{+}$ is invariant by the flow of \eqref{GP} we
obtain that $u(0)=u_{0}\in \mathcal{K}^{+}$.

By Strichartz estimates, like in the proof of Lemma~\ref{Ls2},
\begin{align*}
\|e^{itH}u(t)-\psi\|_{B_1}&\sim \sum_{A\in \{{\rm
                            Id},A_1,A_2,\nabla_z\}}\|A(t)u(t) -A(0)\psi\|_{L^2}\\
  &\lesssim \sum_{A\in \{{\rm
                            Id},A_1,A_2,\nabla_z\}}\left\|
    A(|u|^{2\si}u)\right\|_{\ell^{p_0'}_{\gamma\gtrsim t}L^{q_0'}L^{r'}
    }\\
  &\lesssim \sum_{A\in \{{\rm   Id},A_1,A_2,\nabla_z\}}\|u
    \|^{2\si}_{\ell^{p}_{\gamma\gtrsim 
    t}L^{q}L^{r}}\|Au\|_{\ell^{p_0}_{\gamma\gtrsim
    t}L^{q_0}L^{r}}\Tend {t}\infty 0,
\end{align*}
hence the lemma.
\end{proof}

\subsection{Perturbation lemma and linear profile decomposition}

We begin with the following result
\begin{lemma}[Perturbation lemma]\label{lpt}
  Suppose
  $\tfrac{2}{d-n}\le \si<\tfrac{2}{d-2}$. 
Let $\tilde{u}\in C([0, \infty); B_{1})$ be the solution of 
\begin{equation}\label{GP12}
i\d_t \tilde{u}-H\tilde{u}+|\tilde{u}|^{2\sigma}\tilde{u}=e,
\end{equation}
where $e\in L^{1}_{\rm loc}([0,\infty); B_{-1})$.  Given $A>0$, there exist $C(A)>0$ and $\eps(A)>0$ such that if ${u}\in C([0, \infty); B_{1})$ is a solution of \eqref{GP}, and if
\begin{equation}\label{Cia}
\begin{split}
&\|\tilde{u}\|_{\ell_{\gamma}^{{p}}L^{q}L^{r}}\leq A, \quad \|e\|_{\ell_{\gamma}^{{\tilde{p}'}}L^{\tilde{q}'}L^{r'}}\leq \eps\leq\eps(A),\\
&\|e^{-itH}({u}(0)-\tilde{u}(0))\|_{\ell_{\gamma}^{{p}}L^{q}L^{r}}\leq \eps\leq\eps(A),
\end{split}
\end{equation}
then $\|{u}\|_{\ell_{\gamma}^{{p}}L^{q}L^{r}}\leq C(A)<\infty$.
\end{lemma}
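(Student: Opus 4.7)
The plan is to set $w:=u-\tilde u$, which satisfies
\begin{equation*}
  i\d_t w - Hw = -\(|u|^{2\si}u - |\tilde u|^{2\si}\tilde u\) - e,
  \qquad w(0)=u(0)-\tilde u(0).
\end{equation*}
The goal is to show that $\|w\|_{\ell^{p}_{\gamma}L^{q}L^{r}}$ is controlled by some $C(A)$, and then the triangle inequality gives the bound on $\|u\|_{\ell^{p}_{\gamma}L^{q}L^{r}}$. First, I would choose a small parameter $\eta=\eta(A)>0$ (to be fixed at the end of the argument) and partition $[0,\infty)$ into finitely many intervals $J_1,\dots,J_N$, with $N=N(A,\eta)\lesssim (A/\eta)^{p}$, such that
\begin{equation*}
  \|\tilde u\|_{\ell^{p}_{\gamma\in J_j}L^{q}L^{r}}\le \eta,\qquad j=1,\dots,N.
\end{equation*}
This is possible because $\|\tilde u\|_{\ell^{p}_{\gamma}L^{q}L^{r}}\le A$ and $\ell^{p}$ is a sum over $\gamma\in\Z$.

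On each subinterval $J_j$, Duhamel applied to $w$ together with Lemmas~\ref{Ls1}--\ref{Ls22} yields, using the algebraic identities $p=(2\si+1)\tilde p'$, $q=(2\si+1)\tilde q'$, $r=(2\si+1)r'$ already invoked in Lemma~\ref{ssp}, and the pointwise bound $\bigl||u|^{2\si}u-|\tilde u|^{2\si}\tilde u\bigr|\lesssim \(|\tilde u|^{2\si}+|w|^{2\si}\)|w|$,
\begin{equation*}
  \|w\|_{\ell^{p}_{\gamma\in J_j}L^{q}L^{r}}\le C\|e^{-i(t-t_j)H}w(t_j)\|_{\ell^{p}_{\gamma\in J_j}L^{q}L^{r}} + C\(\eta^{2\si}+\|w\|_{\ell^{p}_{\gamma\in J_j}L^{q}L^{r}}^{2\si}\)\|w\|_{\ell^{p}_{\gamma\in J_j}L^{q}L^{r}}+C\eps.
\end{equation*}
A standard bootstrap (continuity in the endpoint of $J_j$) shows that if the initial free evolution contribution on $J_j$ is small, say at most $\delta_j$, and if $\eta^{2\si}$ is small enough (compared to $1$), then
\begin{equation*}
  \|w\|_{\ell^{p}_{\gamma\in J_j}L^{q}L^{r}}\le 2C(\delta_j+\eps).
\end{equation*}

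To pass from $J_j$ to $J_{j+1}$, I must control the new ``effective initial datum'' appearing at time $t_{j+1}$. Writing
\begin{equation*}
  e^{-i(t-t_{j+1})H}w(t_{j+1}) = e^{-i(t-t_j)H}w(t_j) + i\int_{t_j}^{t_{j+1}} e^{-i(t-s)H}\bigl[(|u|^{2\si}u-|\tilde u|^{2\si}\tilde u)(s)+e(s)\bigr]ds,
\end{equation*}
and applying Strichartz once more, one obtains $\delta_{j+1}\le \delta_j+C'(\eta^{2\si}+\|w\|_{\ell^p_{\gamma\in J_j}L^qL^r}^{2\si})\|w\|_{\ell^p_{\gamma\in J_j}L^qL^r}+C'\eps$, which after the bootstrap yields a recurrence of the form $\delta_{j+1}\le K(\delta_j+\eps)$ with $K=K(\eta)$ bounded. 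Iterating $N$ times gives $\delta_j\le K^{j}(\delta_0+j\eps)$, and since $\delta_0=\|e^{-itH}(u(0)-\tilde u(0))\|_{\ell^p_\gamma L^qL^r}\le\eps$, we conclude
\begin{equation*}
  \sum_{j=1}^N \|w\|_{\ell^{p}_{\gamma\in J_j}L^{q}L^{r}}^{p}\le C(A)
\end{equation*}
provided $\eps(A)$ is chosen so small that each $\delta_j$ and $\|w\|_{\ell^p_{\gamma\in J_j}L^qL^r}$ remain in the regime where the bootstrap closes.

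The main obstacle is purely bookkeeping: one must choose $\eta$ small enough that the nonlinear contribution on each $J_j$ is absorbed, then choose $\eps(A)$ small enough that after $N(A,\eta)$ iterations the cumulative constant $C(A)$ stays finite. The non-admissible Strichartz estimate of Lemma~\ref{Ls22}, together with the matching identities $p=(2\si+1)\tilde p'$ and $q=(2\si+1)\tilde q'$, is what makes the argument go through with the same Lebesgue index $r$ on both sides, so no additional Sobolev trade is needed; it is precisely this feature that keeps the induction self-contained.
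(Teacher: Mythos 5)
Your strategy---set $w=u-\tilde u$, partition $[0,\infty)$ into $N=N(A,\eta)$ blocks on which $\|\tilde u\|_{\ell^p_\gamma L^qL^r}\le\eta$, close a small-data bootstrap on each block via Lemma~\ref{Ls22} and the exponent identities $p=(2\si+1)\tilde p'$, $q=(2\si+1)\tilde q'$, $r=(2\si+1)r'$, and iterate---is precisely the long-time perturbation argument the paper invokes through \cite[Proposition~4.7]{FangXieCaze2011}, so the approach matches.

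One bookkeeping point needs repair. You set $\delta_j=\|e^{-i(t-t_j)H}w(t_j)\|_{\ell^p_{\gamma\in J_j}L^qL^r}$, yet in the recursion $\delta_{j+1}\le\delta_j+\dots$ you implicitly bound $\|e^{-i(t-t_j)H}w(t_j)\|_{\ell^p_{\gamma\in J_{j+1}}L^qL^r}$ by $\delta_j$, which does not follow because $\delta_j$ only records the $J_j$ contribution. Replace $\delta_j$ by the \emph{tail} norm $\|e^{-i(t-t_j)H}w(t_j)\|_{\ell^p_{\gamma\ge\gamma_j}L^qL^r}$ (with $\gamma_j$ the left endpoint of $J_j$): this dominates the free-evolution contribution on both $J_j$ and $J_{j+1}$, so the per-block bootstrap and the transfer estimate both go through. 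With that change---and with the explicit smallness requirement $\big(2C(\delta_j+\eps)\big)^{2\si}\lesssim 1$ kept in force at every step, which is what pins down $\eps(A)$ once $N(A,\eta)$ is fixed---the argument is sound.
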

\begin{proof}
We omit the proof, which can be obtained by suitably adapting the
argument of \cite[Proposition 4.7]{FangXieCaze2011}, thanks to the
same Strichartz estimates as in the proof of Lemma~\ref{ssp}.
\end{proof}

We need the following linear profile decomposition, which is crucial
in the construction of a minimal blow-up solution. This is where the
assumption $\si\ge \tfrac{2}{d-n}$ becomes $\si> \tfrac{2}{d-n}$, in
order to prove \eqref{Eh} below. 
\begin{proposition}[Linear profile decomposition]\label{Ls3}
Suppose
  $\tfrac{2}{d-n}< \si<\tfrac{2}{d-2}$.  Let $\left\{\phi_{k}\right\}^{\infty}_{k=1}$ be a uniformly bounded
sequence in ${B}_{1}$. Then, up to subsequence, the following
decomposition holds.
\begin{equation*}
\phi_{k}(x)=\sum^{M}_{j=1}e^{it^{j}_{k}H}\psi^{j}\(y, z-z^j_{k}\)+W^{M}_{k}(x)\quad \text{for all $M\geq1$,}
\end{equation*}
where $t^{j}_{k}\in \mathbb{R}$, $z^{j}_{k}\in \mathbb{R}^{d-n}$,
$\psi^{j}\in {B}_{1}$ are such that: 
\begin{itemize}
  \item Orthogonality of the parameters
	\begin{equation}\label{El1}
|t^{j}_{k}-t^{\ell}_{k}|+|z^{j}_{k}-z^{\ell}_{k}|\Tend k \infty \infty, \quad \text{for  $j\neq \ell$,}
\end{equation}
	\item Asymptotic smallness property:
\begin{equation}\label{El2}
  \lim_{M\rightarrow\infty}\(\lim_{k\rightarrow\infty}\|e^{-itH}
    W^{M}_{k}\|_{\ell_{\gamma}^{{p}}L^{q}L^{r}}\)=0.  
	\end{equation}
\item Orthogonality in norms: for any fixed $M$ we have
\begin{align}\label{El23}
\|\phi_{k}\|^{2}_{L^{2}}&=\sum^{M}_{j=1}\|\psi^{j}\|^{2}_{L^{2}}+\|W^{M}_{k}\|^{2}_{L^{2}}+o_{k}(1),\\\label{El4}
\|\phi_{k}\|^{2}_{{\dot{B}_{1}}}&=\sum^{M}_{j=1}\|\psi^{j}\|^{2}_{{\dot{B}_{1}}}+\|W^{M}_{k}\|^{2}_{{\dot{B}_{1}}}+o_{k}(1).
\end{align}
\end{itemize}
Furthermore, we have
\begin{equation}\label{Lps}
\|\phi_{k}\|^{2\sigma+2}_{L^{2\sigma+2}}=\sum^{M}_{j=1}\|e^{it^{j}_{k}H}\psi^{j}\|^{2\sigma+2}_{L^{2\sigma+2}}+\|W^{M}_{k}\|^{2\sigma+2}_{L^{2\sigma+2}}+o_{k}(1)\quad \text{for all $M\geq1$.}
 \end{equation}
In particular, for all  $M\geq1$
\begin{align}\label{Sg}
S(\phi_{k})&=\sum^{M}_{j=1}S\left( e^{it^{j}_{k}H}\psi^{j}\right)+S(W^{M}_{k})+o_{k}(1)\\    \label{Ig}
I(\phi_{k})&=\sum^{M}_{j=1}I\left( e^{it^{j}_{k}H}\psi^{j}\right)+I(W^{M}_{k})+o_{k}(1).
\end{align}
\end{proposition}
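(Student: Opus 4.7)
The plan is to follow the Bahouri--G\'erard--Keraani concentration-compactness scheme, adapted to the partial-confinement geometry. Since the harmonic potential in $y$ forbids $y$-translations and we are energy-subcritical (so no scaling profiles), the only non-compact symmetries producing profiles are $z$-translations $\tau_{z_k^j}$ and time shifts $t_k^j$; the latter play a double role, encoding both the discrete almost-periodicity of $e^{-it(-\Delta_y+|y|^2)}$ in the confined direction and the usual time concentration in the dispersive direction $z$.

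First, I would prove a profile-extraction lemma: if $\{v_k\}\subset B_1$ is bounded with $\limsup_k \|e^{-itH}v_k\|_{\ell^p_\gamma L^q L^r}=\delta>0$, then up to a subsequence there exist $t_k\in\mathbb{R}$, $z_k\in\mathbb{R}^{d-n}$, and $\psi\in B_1\setminus\{0\}$ such that $e^{it_k H}v_k(y, z-z_k)\rightharpoonup \psi$ weakly in $B_1$, with a quantitative lower bound $\|\psi\|_{B_1}\gtrsim \delta^\alpha$ for some $\alpha>0$. The argument interpolates between the base Strichartz norm $\ell^{p_0}L^{q_0}L^r$, which is controlled by $\|v_k\|_{B_1}$ via Lemma~\ref{axc}, and the weaker norm $\ell^pL^qL^r$, using Littlewood--Paley localization in $z$ and a Bernstein inequality to produce a pointwise concentration statement. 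It is here that the strict inequality $\sigma>2/(d-n)$ is essential: from the relations in Lemma~\ref{axc} one has $\tfrac{2}{p_0}-\tfrac{2}{p}=\tfrac{d-n}{2}-\tfrac{1}{\sigma}$, which is strictly positive precisely when $\sigma>2/(d-n)$, producing the H\"older gap that upgrades the refined Strichartz inequality into a usable concentration estimate.

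The decomposition is then built by iteration: set $W_k^0:=\phi_k$ and at stage $M$ apply the extraction lemma to $W_k^{M-1}$ to define $(\psi^M, t_k^M, z_k^M)$ and
\begin{equation*}
W_k^M := W_k^{M-1}-e^{-it_k^M H}\psi^M(\cdot-z_k^M).
\end{equation*}
The $\dot B_1$-Pythagorean identity \eqref{El4}, proved in parallel, forces $\sum_j \|\psi^j\|_{B_1}^2\lesssim \limsup\|\phi_k\|_{B_1}^2$, hence the extracted amplitudes are summable and the remainder norms $\delta_M\to 0$, which is \eqref{El2}. Orthogonality of parameters \eqref{El1} is obtained by induction: if $(t_k^j-t_k^\ell, z_k^j-z_k^\ell)$ stayed bounded for some $j<\ell$, passing to limits in the weak convergence defining $\psi^\ell$ would yield $\psi^\ell=0$, a contradiction. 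The Pythagorean identities \eqref{El23} and \eqref{El4} then follow from \eqref{El1}, the weak convergence, and the fact that $e^{it_k H}\tau_{z_k}$ is unitary on $L^2$ and $\dot B_1$ and intertwines correctly with the vector fields $(\nabla_z, A_1, A_2)$ of Section~\ref{S:es}.

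The hardest step will be the $L^{2\sigma+2}$ decomposition \eqref{Lps}. For each profile $j$ I would split two cases: if $\{t_k^j\}$ is bounded, pass to $t_k^j\to t_\star^j$ and use the compact embedding $B_1\hookrightarrow L^{2\sigma+2}_{\rm loc}$ together with a Brezis--Lieb argument in $z$; if $|t_k^j|\to\infty$, Lemma~\ref{lem:dispLp} yields $\|e^{it_k^j H}\psi^j\|_{L^{2\sigma+2}}\to 0$, so this profile simply disappears from the limiting $L^{2\sigma+2}$ sum and its contribution on the right-hand side of \eqref{Lps} is $o_k(1)$. Cross terms between distinct profiles vanish by \eqref{El1} via a density argument approximating each $\psi^j$ by Schwartz functions compactly supported in $z$: translates with $|z_k^j-z_k^\ell|\to\infty$ have asymptotically disjoint support, and sequences with $|t_k^j-t_k^\ell|\to\infty$ decouple through Lemma~\ref{lem:dispLp} applied to the difference flow. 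The functional decompositions \eqref{Sg} and \eqref{Ig} are immediate consequences of combining \eqref{El23}, \eqref{El4}, and \eqref{Lps}.
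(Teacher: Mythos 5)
Your proposal follows the same Bahouri--G\'erard--Keraani scheme as the paper, and most of the structure is correct: the refined Strichartz inequality coming from $p>p_0$ (which is exactly $\sigma>\tfrac{2}{d-n}$), iterative extraction of weak limits after shifts in $(t,z)$, orthogonality of parameters by contradiction, Pythagorean expansions from weak convergence and the unitarity of $e^{itH}\tau_{z_k}$ on $L^2$ and $\dot B_1$, and the Duyckaerts--Holmer--Roudenko bounded/unbounded-$t_k^j$ dichotomy for \eqref{Lps} using Lemma~\ref{lem:dispLp}. The identities \eqref{Sg}, \eqref{Ig} then follow immediately.

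There is, however, a genuine gap in the profile-extraction step, precisely at the point you describe as ``Littlewood--Paley localization in $z$ and a Bernstein inequality to produce a pointwise concentration statement.'' A Littlewood--Paley cutoff in $z$ gives, via Bernstein, an $L^\infty_z L^2_y$ bound, not an $L^\infty_x$ bound, and a standard Fourier frequency cutoff in the full variable $x$ does \emph{not} commute with $e^{-itH}$, because the harmonic oscillator in $y$ exchanges physical and Fourier variables. You assert at the outset that the confinement forbids $y$-translations, but this is the conclusion to be proved, not an input: one must actually show that the point of concentration can be taken with $y$ in a fixed compact set. The paper's device is the mixed cutoff $P_{\le N}=\varphi\bigl((-\Delta_y+|y|^2)/N^2\bigr)\varphi(-\Delta_z/N^2)$, a Hermite spectral cutoff in $y$ combined with a Fourier cutoff in $z$, which enjoys two crucial properties simultaneously: it commutes with $e^{-itH}$, and it produces a pointwise Gaussian bound $|P_{\le N}w(x)|\lesssim N^{n/2}e^{-c|y|^2/N^2}\bigl(\int_{\R^n}|P_{\le N}w(y,z)|^2\,dy\bigr)^{1/2}$ (from \cite{PoRoTh2015}), which is then upgraded to an $L^\infty_x$ bound by Bernstein in $z$. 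It is the Gaussian decay of the Hermite cutoff that forces the concentration point $y^1_k$ to stay in a ball $|y|\le R$, hence (after extraction) to converge, so that no $y$-translation parameter appears. Without this ingredient, your extraction lemma does not close: you would either need a $y$-translation (incompatible with $H$) or some substitute compactness argument in $y$, and neither is supplied.

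A smaller point: in the paper the quantitative lower bound produced by the extraction lemma is on $\|\psi^j\|_{L^2}$, which is then combined with the $L^2$-Pythagorean identity \eqref{El23} to get $A_M\to0$; your statement of a lower bound on $\|\psi\|_{B_1}$ would work equally well with \eqref{El4}, so this is not a gap, just a different normalization.
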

We note that cores are present only in the $z$-variable, not in the
$y$-variable. This is so because the partial harmonic potential has a
confining effect, hence in $y$, the situation is similar to the radial
setting (as in \cite{KenigMerle2006,HolmerRoudenko2008}). 
\begin{proof}
First, we show that there exist $\theta\in (0,1)$ such that 
\begin{equation} \label{Eh}
\|e^{-itH}f \|_{\ell_{\gamma}^{p}L^{q}L^{r}}
\lesssim
\|f\|_{B_{1}}^{1-\theta}\|e^{-itH}f\|^{\theta}_{L^{\infty}_{t}L^{r}_{x}},\quad
\forall f\in B_1.
\end{equation}
Indeed, from \eqref{auxn1} we have
\begin{equation*}
   \| e^{-itH}f\|_{\ell_{\gamma}^pL^q L^r}\lesssim \|\chi(\cdot
   -\gamma\pi) e^{-itH}f\|_{\ell^{p}_\gamma
     W^{s,q_0}(\mathbb{R};L^r(\mathbb{R}^d))}.
\end{equation*}
 Since $\sigma>\tfrac{2}{d-n}$, we have $p_{0}<p$ and thus there
exists $\alpha\in (0, 1)$ such that 
\begin{align}\nonumber
   \| e^{-itH}f\|_{\ell_{\gamma}^pL^q L^r}&\lesssim \|\chi(\cdot
   -\gamma\pi) e^{-itH}f\|_{\ell^{p_0}_\gamma
     W^{s,q_0}(\mathbb{R};L^r(\mathbb{R}^d))}^\alpha \times\\\label{De11}
		&\|\chi(\cdot
   -\gamma\pi) e^{-itH}f\|_{\ell^{\infty}_\gamma
     W^{s,q_0}(\mathbb{R};L^r(\mathbb{R}^d))}^{1-\alpha}.
\end{align}
By the  homogeneous Strichartz estimate we get, like in the proof of
Lemma~\ref{axc}, 
\begin{equation}\label{De112}
  \begin{aligned}
  \|\chi(\cdot
   -\gamma\pi) e^{-itH}f\|_{\ell^{p_0}_\gamma
     W^{s,q_0}(\mathbb{R};L^r(\mathbb{R}^d))}&\lesssim \|\chi(\cdot
   -\gamma\pi) e^{-itH}H^{s}f\|_{\ell^{p_0}_\gamma
     L^{q_0}L^r}\\
   &\lesssim \|f\|_{B_{2s}}\lesssim
   \|f\|_{B_1}. 
  \end{aligned}
  \end{equation}
 Next we interpolate between Sobolev spaces in time, there is $\eta\in
 (0,1)$ such that
 \begin{gather}\nonumber
   \|\chi(\cdot
   -\gamma\pi) e^{-itH}f\|_{ W^{s,q_0}(\mathbb{R};L^r(\mathbb{R}^d))}\leq\\\label{De113}
	 \|\chi(\cdot
   -\gamma\pi) e^{-itH}f\|_{ W^{1/2,q_0}(\mathbb{R};L^r(\mathbb{R}^d))}^{1-\eta}\|\chi(\cdot
   -\gamma\pi) e^{-itH}f\|_{ L^{q_0}(\mathbb{R};L^r(\mathbb{R}^d))}^{\eta}.
 \end{gather}
 Moreover, we have
 \begin{align}\nonumber
   \|\chi(\cdot
   -\gamma\pi) e^{-itH}f\|_{\ell_{\gamma}^\infty
   W^{1/2,q_0}(\mathbb{R};L^r(\mathbb{R}^d))}&
  \lesssim \|e^{-itH}
   H^{1/2}f\|_{\ell_{\gamma}^\infty L^{q_0}L^r}\\ \notag
   &\lesssim \|e^{-itH}
   H^{1/2}f\|_{\ell_{\gamma}^{p_0}L^{q_0}L^r}\\
	&\lesssim \|H^{1/2}f\|_{L^2}= \|f\|_{B_1},\label{De114}
 \end{align}
and
\begin{equation}\label{Esu}
\|\chi(\cdot
   -\gamma\pi) e^{-itH}f\|_{\ell_{\gamma}^{\infty} L^{q_0}(\mathbb{R};L^r(\mathbb{R}^d))}\lesssim \|e^{-itH}
  f\|_{L_{t}^\infty L_{x}^r}.
\end{equation}
Combining \eqref{De11}, \eqref{De112}, \eqref{De113}, \eqref{De114} and \eqref{Esu} we obtain \eqref{Eh}.  

Since we will know that $\|W^{M}_{k}\|_{B_{1}}$  is uniformly bounded, then
to prove \eqref{El2}, it will suffice  to show that
\begin{equation*}
\lim_{M\rightarrow\infty}\(\lim_{k\rightarrow\infty}\|e^{-itH}W^{M}_{k}\|_{L_{t}^{\infty}L_{x}^{r}}\)=0. 
	\end{equation*}
We can then essentially repeat the proof of
\cite[Theorem~5.1]{FangXieCaze2011}, which generalized
\cite[Lemma~2.1]{DuyHolmerRoude2008}. Note that in the confined
variable 
$y$, the situation is similar to the radial setting without potential
(see e.g. \cite[Lemma~5.2]{HolmerRoudenko2008}),
this is why no core in $y$ will appear, only cores in $z$ (denotes by
 $z_k^j$), due to the translation invariance in $z$. Another technical
 difference is that Sobolev spaces $H^s$ have to be replaced with the
 spaces $B_s$ defined in the introduction.  Unlike
in the case without 
potential, $e^{-itH}$ does not commute with the convolution with
Fourier multipliers, nor is unitary on $\dot H^s$, and this imposes some extra
modification in the analysis.  
\smallbreak

\noindent \textbf{Step 1.} First we construct $t^{1}_{k}$, $z^{1}_{k}$,
$\psi^{1}$ and $W^{1}_{k}$. This is done by adapting
\cite[Lemma~5.2]{FangXieCaze2011}. By assumption, there exists  a positive
constant $\Lambda>0$ such that $\|\phi_{k}\|_{B_{1}}\leq \Lambda$. We infer
$\|e^{-itH}\phi_{k}\|_{L^{\infty}_{t}L^{r}_{x}}\lesssim
\|e^{-itH}\phi_{k}\|_{L^{\infty}_{t}B_1}= \|\phi_{k}\|_{{B}^{1}}\leq \Lambda$. Passing to a subsequence, we define
\begin{equation}\label{P11}
A_{1}:=\lim_{k\rightarrow\infty}\|e^{-itH}\phi_{k}\|_{L^{\infty}_{t}L^{r}_{x}}.
\end{equation}
If $A_{1}=0$, we set $\psi^{j}=0$ and $W^{1}_{k}=\phi_{k}$ for all
$k\geq1$.
We now suppose that $A_{1}>0$. We introduce a real-valued, radially
symmetric function $\varphi\in C_0^\infty(\R^d)$ supported in
$\{\xi\in \R^d;\ |\xi|\le 2\}$, such that
$\varphi(\xi)=1$ for 
$ |\xi|\leq 1$. For $N>1$ (to be chosen
later), in the same fashion as in \cite{HaniThoman2016}, define the
operator
\begin{equation*}
  P_{\le N}=\varphi\( \frac{-\Delta_y +| y|^2}{N^2}\)
\varphi\(\frac{-\Delta_z}{N^2}\),
\end{equation*}
where the first operator is to be understood as a spectral cut-off,
since the harmonic oscillator possesses an eigenbasis consisting of
Hermite functions, and the second operator is a Fourier (in $z$)
cut-off. By considering this operator instead of a Fourier cut-off in
$x$ (presented as a convolution in
\cite{HolmerRoudenko2008,FangXieCaze2011}), we gain the commutation
property 
\begin{equation*}
  [e^{-itH},P_{\le N}]=0.
\end{equation*}
Also, since $-\Delta_y +|y|^2$ and $-\Delta_z$ commute and are
positive operators, 
we have for $s\in (0,1)$ and $f\in B_1$,
\begin{align*}
\|f-P_{\le N} f\|_{B_{s}}
&=\|(1-P_{\le N})H^{\frac{s-1}{2}}H^{\frac{1-s}{2}} f\|_{B_{s}}\le
 \frac{1}{N^{1-s}}\|f\|_{B_{1}}. 
\end{align*}
In view of the Sobolev embedding $\dot H^s(\R^d)\hookrightarrow
L^{2\si+2}(\R^d)$ 
with $s=\tfrac{d\si}{2\si+2}$, and of the fact that $e^{-itH}$ is
bounded on $B_s$,
\begin{equation}\label{Ni1}
  \begin{aligned}
    \|e^{-itH}\phi_{k}-e^{-itH}P_{\le N}
\phi_{k}\|_{L^{\infty}_{t}L^{r}_{x}}&\lesssim \|e^{-itH}\phi_{k}-e^{-itH}P_{\le N}
\phi_{k}\|_{L^{\infty}_{t}\dot H^s_{x}} \\
& \lesssim \|e^{-itH}\phi_{k}-e^{-itH}P_{\le N}
\phi_{k}\|_{L^{\infty}_{t}B_{s}}  \\
&\lesssim \|\phi_{k}-P_{\le N}
\phi_{k}\|_{L^{\infty}_{t}B_{s}}    \le C_0 \frac{\Lambda}{N^{1-s}} \leq \frac{A_{1}}{2} ,
 \end{aligned}
\end{equation}
with $N = \(\tfrac{2  C_0\Lambda}{A_1}\)^{1/(1-s)}+1$. 
 It follows by \eqref{Ni1} that for $k$ large,
\begin{equation}\label{Pf1}
\|P_{\le N}  e^{-itH}\phi_{k}\|_{L^{\infty}_{t}L^{r}_{x}}\geq \frac{1}{4}A_{1}.
\end{equation}
 Moreover,  by interpolation we have
\begin{align*}
\|P_{\le N} e^{-itH}\phi_{k}\|_{L^{\infty}_{t}L^{r}_{x}}&\leq \|P_{\le N}
   e^{-itH}\phi_{k}\|^{(r-2)/r}_{L^{\infty}_{t}L^{2}_{x}}\|P_{\le N}
 e^{-itH}\phi_{k}\|^{2/r}_{L^{\infty}_{t}L^{\infty}_{x}}\\
&\leq \|\phi_{k}\|^{(r-2)/r}_{L^{2}}\|P_{\le N} e^{-itH}\phi_{k}\|^{2/r}_{L^{\infty}_{t}L^{\infty}_{x}}\\
&\leq \Lambda^{(r-2)/r}\|P_{\le N} e^{-itH}\phi_{k}\|^{2/r}_{L^{\infty}_{t}L^{\infty}_{x}}.
\end{align*}
Thus by \eqref{Pf1} we obtain, for $k$ large enough,
\begin{equation}\label{Cu1}
\|P_{\le N} e^{-itH}\phi_{k}\|_{L^{\infty}_{t}L^{\infty}_{x}}\geq \left(\frac{A_{1}}{4}\right)^{r/2}\Lambda^{1-r/2}.
\end{equation}
In view of Lemmas~3.1 and 3.2 from \cite{PoRoTh2015}, there exists
$c>0$ independent of $\phi_k$ and $t$ such that for all $x\in\R^d$,
\begin{equation*}
  |P_{\le N} e^{-itH}\phi_{k}(x)|\lesssim N^{n/2} e^{-c|y|^2/N^2}
  \(\int_{\R^n}\left| P_{\le N} e^{-itH}\phi_{k}(y,z)\right|^2dy\)^{1/2}.
\end{equation*}
Since $P_{\le N}$ localizes the frequencies in $z$, Bernstein
inequality implies
\begin{equation*}
  \int_{\R^n}\left| P_{\le N} e^{-itH}\phi_{k}(y,z)\right|^2dy\lesssim
  N^{d-n} \int_{\R^d}\left| P_{\le N} e^{-itH}\phi_{k}(y,z)\right|^2dydz,
\end{equation*}
and so
\begin{equation*}
  |P_{\le N} e^{-itH}\phi_{k}(x)| \lesssim N^{d/2} e^{-c|y|^2/N^2}\Lambda.
\end{equation*}
  We deduce from \eqref{Cu1} that for $R$ sufficiently large,
\begin{equation}\label{Cu12}
\|P_{\le N} e^{-itH}\phi_{k}\|_{L^{\infty}_{t}L^{\infty}_{|y|\leq R}}\geq \frac{1}{2\Lambda^{r/2-1}}\left(\frac{A_{1}}{4}\right)^{r/2}.
\end{equation}
It follows that there exist $t^{1}_{k}\in \R$, $z^{1}_{k}\in
\mathbb{R}^{d-n}$ and $y^{1}_{k}\in \mathbb{R}^{n}$, $|y^{1}_{k}|\leq R$, such that
\begin{equation}\label{Cas}
|P_{\le N} e^{-it^{1}_{k}H}\phi_{k}|(y^{1}_{k},z^{1}_{k})\geq  \frac{1}{4\Lambda^{r/2-1}}\left(\frac{A_{1}}{4}\right)^{r/2}.
\end{equation}
Since $|y^{1}_{k}|\leq R$, possibly after extracting a subsequence, we
get $y^{1}_{k}\rightarrow y^{1}$. Let 
\begin{equation*}
w_{k}(x)=e^{-it^{1}_{k}H}\phi_{k}(y, z+ z^{1}_{k}).
\end{equation*}
Then $\left\{w_{k}\right\}^{\infty}_{k=1}$ is uniformly bounded in
$B_{1}$ and there exists $\psi^{1}\in B_{1}$  such that, passing to a
subsequence if necessary,  $w_{k}\rightharpoonup \psi^{1}$ in $B_1$ as $k\rightarrow \infty$. In particular, $\|\psi^{1}\|_{B_{1}}\leq \Lambda$. As
$|P_{\le N} e^{-it^{1}_{k}H}\phi_{k}|(y^{1},z^{1}_{k})=|P_{\le N} w_{k}|(y^{1},0)$, by \eqref{Cas} we get
\begin{equation*}
|P_{\le N} \psi^{1}|(y^{1},0)\geq  \frac{1}{4\Lambda^{r/2-1}}\left(\frac{A_{1}}{4}\right)^{r/2}.
\end{equation*}
We note that the previous computations yield
\begin{align*}
  \|\psi^1\|_{L^2(\R^d)} \ge \|P_{\le N}\psi^1\|_{L^2(\R^d)} \gtrsim
  |P_{\le N} \psi^{1}|(y^{1},0)&\gtrsim \frac{1}{N^{d/2}}
    \frac{A_1^{r/2}}{\Lambda^{r/2-1}}\\&  \ge C_1
    \(\frac{A_1}{\Lambda}\)^{\tfrac{d}{2(1-s)}} \frac{A_1^{\si+1}}{\Lambda^{\si}},
  \end{align*}
  for a universal constant $C_1$. 
Set $W^{1}_{k}(x):=\phi_{k}(x)-e^{it^{1}_{k}H}\psi^{1}(y,
z-z^{1}_{k})$: $W^{1}_{k} \rightharpoonup 0$ in $B_{1}$. Furthermore, since
\begin{equation*}
\|\psi^{1}\|^{2}_{\dot{B}^{1}}=\lim_{k\rightarrow\infty}\left\langle \psi^{1}, e^{-it_k^1H}\phi_{k}(\cdot,\cdot+z^{1}_{k})\right\rangle=\lim_{k\rightarrow\infty}\left\langle e^{-it_k^1H}\psi^{1}, \phi_{k}(\cdot,\cdot+z^{1}_{k})\right\rangle,
\end{equation*}
this implies that
\begin{align*}
\|\phi_{k}\|^{2}_{\dot{B}_{1}}&=\|\psi^{1}\|^{2}_{\dot{B}_{1}}+\|W^{1}_{k}\|^{2}_{\dot{B}_{1}}+o_{k}(1),\\
\|\phi_{k}\|^{2}_{L^{2}}&=\|\psi^{1}\|^{2}_{L^{2}}+\|W^{1}_{k}\|^{2}_{L^{2}}+o_{k}(1),
\end{align*}
as $k\rightarrow\infty$. Thus \eqref{El23} and \eqref{El4} hold. In particular we see that $\|W^{1}_{k}\|^{2}_{B_{1}}\leq \Lambda$. 

We next replace $\left\{\phi_{k}\right\}^{\infty}_{k=1}$  by $\left\{W^{1}_{k}\right\}^{\infty}_{k=1}$ and repeat the same argument.\\
If
$A_{2}:=\limsup_{k\rightarrow\infty}\|e^{-itH}W^{1}_{k}\|_{L^{\infty}_{t}L^{r}_{x}}=0$,
we can take $\psi^{j}=0$ for every $j\geq 2$ and the proof is
over. Notice that the property \eqref{El2} is immediate consequence of
\eqref{Eh}. Otherwise there exist $\psi^{2}\in B_{1}$, a sequence of
time $\left\{t^{2}_{k}\right\}^{\infty}_{k=1}\subset \mathbb{R}$ and
sequence $\left\{z^{2}_{k}\right\}^{\infty}_{n=1}\subset
\mathbb{R}^{d-n}$ such that
$e^{-it^{2}_{k}H}W^{1}_{k}(\cdot,\cdot+z^{2}_{k})\rightharpoonup
\psi^{2}$ with 
\begin{equation*}
\|\psi^{2}\|_{L^2} \ge C_1
    \(\frac{A_2}{\Lambda}\)^{\tfrac{d}{2(1-s)}} \frac{A_2^{\si+1}}{\Lambda^{\si}}.
\end{equation*}
We now show that 
\begin{equation}\label{Orto}
|t^{2}_{k}-t^{1}_{k}|+|z^{2}_{k}-z^{1}_{k}|\Tend k \infty \infty.
\end{equation}
Let
$g_{k}:=e^{-it^{1}_{k}H}\phi_{k}(\cdot,\cdot+z^{1}_{k})-\psi^{1}=e^{-it^{1}_{k}H}
W_k^1 $. Notice that $g_{k}\rightharpoonup 0$ in $B_{1}$. Moreover, by
definition $e^{-i(t^{2}_{k}-t^{1}_{k})H}g_{k}(\cdot,\cdot+(z^{2}_{k}-z^{1}_{k}))\rightharpoonup \psi^{2}\neq 0$ weakly in $B_{1}$. Suppose by contradiction that $|t^{2}_{k}-t^{1}_{k}|+|z^{2}_{k}-z^{1}_{k}|$ is bounded. Then, after possible extraction, $t^{2}_{k}-t^{1}_{k}\rightarrow t^{\ast}$ and  $z^{2}_{k}-z^{1}_{k}\rightarrow z^{\ast}$. However, since $g_{k}\rightharpoonup 0$, we infer that  $e^{-i(t^{2}_{k}-t^{1}_{k})H}g_{k}(\cdot,\cdot+(z^{2}_{k}-z^{1}_{k}))\rightharpoonup 0$, which is impossible.

An argument of iteration and orthogonal extraction allows us to construct $\big\{t^{j}_{k}\big\}_{j\geq1}\subset \mathbb{R}$, $\big\{z^{j}_{k}\big\}_{j\geq1}\subset \mathbb{R}^{d-n}$ and the sequence of functions $\left\{\psi^{j}\right\}_{j\geq 1}$ in $B_{1}$ such that the properties \eqref{El1}, \eqref{El2} and \eqref{El23} hold and
\begin{equation*}
\|\psi^{M}\|_{L^2} \ge C_1
    \(\frac{A_M}{\Lambda}\)^{\tfrac{d}{2(1-s)}}
    \frac{A_M^{\si+1}}{\Lambda^{\si}}.
  \end{equation*}
  In view of \eqref{El23}, we obtain
\begin{equation*}
\frac{1}{\Lambda^{2\si+2+\tfrac{d}{1-s}}}\sum^{\infty}_{M=1}A^{2\si+\tfrac{d}{1-s} }_{M}\lesssim \Lambda^{2}, 
\end{equation*}
hence $A_{M}\rightarrow 0$ as $M\rightarrow \infty$. Finally, from \eqref{Eh} we infer that
\begin{equation*}
\|e^{-itH}W^{M}_{k}\|_{\ell^{p}_{\gamma}L^{q}L^{r}}\lesssim\Lambda^{1-\theta}A^{\theta}_{M},  
\end{equation*}
and the property \eqref{El2} holds.
\smallbreak

\noindent\textbf{Step 2.} It remains to show \eqref{Lps}. To this end,
we show that for all $M\ge 1$,
\begin{equation}\label{Iq}
\Big\|\sum^{M}_{j=1}e^{it^{j}_{k}H}\psi^{j}(\cdot, \cdot-z_{k})\Big\|^{2\sigma+2}_{L^{2\sigma+2}}=\sum^{M}_{j=1}\|e^{it^{j}_{k}H}\psi^{j}\|^{2\sigma+2}_{L^{2\sigma+2}}+o_{k}(1) .
\end{equation}
 We proceed  as in \cite[Lemma 2.3]{DuyHolmerRoude2008}.
By reordering, we can choose $M^{\ast}\leq M$ such that\\
(i) For $1\leq j\leq M^{\ast}$: the sequence $\big\{t^{j}_{k}\big\}_{k\geq1}$ is bounded.\\
(ii) For $M^{\ast}+1\leq j\leq M $: we have that $\lim_{k\rightarrow\infty} |t^{j}_{k}|=\infty $.\\
Consider the inequality 
\[ \left|\Big|\sum^{M}_{j=1}   z_{j}  \Big|^{2\sigma+2} -\sum^{M}_{j=1}|z_{j}|^{2\sigma+2} \right| \leq C_{\sigma,M}\sum_{j\neq j'}| z_{j} || z_{j} | ^{2\sigma+1}, \]
for $z_{j}\in \mathbb{C}$, $j=1$, $2$, $\ldots$, $M$. If $1\leq j\leq
\ell\leq M^{\ast}$, the pairwise orthogonality (in space) \eqref{El1}
leads the cross terms in the sum of the left side of \eqref{Iq} to
vanish as $k\rightarrow \infty$. Therefore, 
\begin{equation}\label{Iqx1}
\left\|\sum^{M^{\ast}}_{j=1}e^{it^{j}_{k}H}\psi^{j}(\cdot, \cdot-z_{k})\right\|^{2\sigma+2}_{L^{2\sigma+2}}=\sum^{M^{\ast}}_{j=1}\left\|e^{it^{j}_{k}H}\psi^{j}\right\|^{2\sigma+2}_{L^{2\sigma+2}}+o_{k}(1).
\end{equation}
On the other hand, if $ M^{\ast}+1\leq j\leq M$, then
$|t^{j}_{k}|\rightarrow +\infty$ and, from Lemma~\ref{lem:dispLp},
\begin{equation}\label{qes}
\lim_{k\rightarrow\infty}\left\|e^{it_k^jH}\psi^{j}\right\|^{r}_{L^{r}}=0.
\end{equation}
Moreover, since (see proof of Step 1)
\begin{equation}\label{scs}
\lim_{M\rightarrow\infty}\left(\lim_{k\rightarrow\infty}\|e^{-itH}W^{M}_{k}\|_{L^{\infty}_{t}L_{x}^{r}}\right)=0,
\end{equation}
combining \eqref{Iqx1}, \eqref{qes} and \eqref{scs}, we obtain \eqref{Iq}. This show the last statement of the proposition and the proof is complete.
\end{proof}


Finally, we will show the following result related with the linear profile decomposition.

\begin{lemma}\label{apx}
Let $M\in \mathbb{N}$ and let $\left\{\psi^{j}\right\}^{M}_{j=0}\subset B_{1}$ satisfy
\begin{equation*}
\sum^{M}_{j=0}S(\psi^{j})-\eps\leq S\left(\sum^{M}_{j=0}\psi^{j}\right)\leq\beta-\eta, \quad 
-\eps\leq I\left(\sum^{M}_{j=0}\psi^{j}\right)\leq \sum^{M}_{j=0}I(\psi^{j})+\eps.
\end{equation*} 
where $\eps>0$ and $2\eps<\eta$. Then for all $0\leq j\leq M$ we have $\psi^{j}\in \mathcal{K}^{+}$.
\end{lemma}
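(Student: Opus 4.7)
The plan is a contradiction argument based on the quadratic functional $B:=B^{1,0}$ from Section~\ref{S:1}, applied with the parameters $(a,b)=(1,0)$. This choice satisfies \eqref{Cd1} with $2a+b(d-n)=2>0$; moreover $J^{1,0}=I$ and $d^{1,0}=\beta$ by Lemma~\ref{LL2}(ii), and a direct computation of the constants $\alpha_1,\alpha_2$ in \eqref{Co1} gives $\alpha_1=\alpha_2=\tfrac{\sigma}{2\sigma+2}$, so
\[
B(u)=S(u)-\frac{1}{2\sigma+2}I(u)=\frac{\sigma}{2\sigma+2}\bigl(\|\nabla_x u\|_{L^2}^2+\|yu\|_{L^2}^2+\|u\|_{L^2}^2\bigr).
\]
In particular $B$ is a positive combination of individual nonnegative quadratic forms in $u$, so for any index $j_0$ the trivial diagonal bound
\[
B(\psi^{j_0})\le\sum_{j=0}^{M}B(\psi^j)=\sum_{j=0}^{M}S(\psi^j)-\frac{1}{2\sigma+2}\sum_{j=0}^{M}I(\psi^j)
\]
holds.

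I will first prove $I(\psi^j)\ge 0$ for every $j$. Assume, towards a contradiction, that $I(\psi^{j_0})<0$ for some $j_0$; in particular $\psi^{j_0}\ne 0$. The scaling argument from Step~2 of the proof of Lemma~\ref{L1} yields $\lambda_0\in(0,1)$ such that $I(\lambda_0\psi^{j_0})=0$, so by definition of $d^{1,0}=\beta$ we get $B(\lambda_0\psi^{j_0})\ge\beta$, and the homogeneity of $B$ of degree $2$ gives $B(\psi^{j_0})=\lambda_0^{-2}B(\lambda_0\psi^{j_0})>\beta$. On the other hand, the hypotheses of the lemma rearrange into
\[
\sum_{j=0}^{M}S(\psi^j)\le S\Bigl(\sum_{j=0}^{M}\psi^j\Bigr)+\eps\le \beta-\eta+\eps,\qquad \sum_{j=0}^{M}I(\psi^j)\ge I\Bigl(\sum_{j=0}^{M}\psi^j\Bigr)-\eps\ge -2\eps,
\]
and plugging into the diagonal bound I obtain $B(\psi^{j_0})\le(\beta-\eta+\eps)+\tfrac{2\eps}{2\sigma+2}$, which is strictly less than $\beta$ because $2\eps<\eta$ implies $\eps(1+\tfrac{1}{\sigma+1})<\eta$. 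This contradicts $B(\psi^{j_0})>\beta$, so $I(\psi^j)\ge 0$ for every $j$.

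Once $I(\psi^j)\ge 0$ is established, the identity $S(\psi^j)=B(\psi^j)+\tfrac{1}{2\sigma+2}I(\psi^j)$ together with the nonnegativity of both summands yields $S(\psi^j)\ge 0$ for each $j$. Therefore
\[
S(\psi^j)\le \sum_{k=0}^{M}S(\psi^k)\le \beta-\eta+\eps<\beta,
\]
and invoking Lemma~\ref{L3} to replace the condition $I(\psi^j)\ge 0$ by the equivalent condition $P(\psi^j)\ge 0$, I conclude $\psi^j\in\mathcal{K}^+$ for every $0\le j\le M$. There is no serious obstacle: the whole argument rests on noticing that the choice $(a,b)=(1,0)$ makes $B$ proportional to $\|\cdot\|_{B_1}^2$, which is what makes the diagonal bound $B(\psi^{j_0})\le\sum_j B(\psi^j)$ automatic and reduces the problem to arithmetic with the quantitative gap $2\eps<\eta$.
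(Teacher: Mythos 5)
Your proof is correct and follows essentially the same route as the paper's: assume $I(\psi^{j_0})<0$, rescale to land on the Nehari manifold and deduce $B^{1,0}(\psi^{j_0})>\beta$, then use the nonnegativity $B^{1,0}\ge 0$ together with the $\eps$--$\eta$ hypotheses to reach a contradiction. The only cosmetic difference is that you invoke quadratic homogeneity of $B^{1,0}$ directly, whereas the paper phrases the same fact as monotonicity of $\lambda\mapsto B^{1,0}(\varphi^{1,0}_\lambda)$; you also carry the exact coefficient $\tfrac{1}{2\sigma+2}$ rather than the paper's (typographical) $\tfrac14$, which is slightly cleaner.
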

\begin{proof}
Assume by contradiction there exists $k\in \left\{0,1,\ldots, M\right\}$ such that $I(\psi^{k})<0$.  Using the definition of $(\psi^{k})_{\lambda}^{1,0}$ (see \eqref{Fm}) it is not difficult to show that there exists $\lambda<0$ such that $I((\psi^{k})_{\lambda}^{1,0})>0$. This implies that there exists $\lambda_{0}<0$ such that $I((\psi^{k})_{\lambda_{0}}^{1,0})=0$. Moreover, a simple calculation shows that $\partial_{\lambda}B^{1,0}((\psi^{k})_{\lambda}^{1,0})\geq 0$ where $B^{1,0}$ is given by \eqref{Co1}. Thus, by Lemma \ref{L1} we get
\begin{equation*}
B^{1,0}(\psi^{k})\geq B^{1,0}((\psi^{k})_{\lambda_{0}}^{1,0})=S((\psi^{k})_{\lambda_{0}}^{1,0})\geq\beta.
\end{equation*}
Notice that $B^{1,0}(\psi^{j})\geq 0$ for $0\leq j \leq M$, by Lemma \ref{L1}. Since $2\eps<\eta$, we obtain
\begin{align*}
\beta&\leq \sum^{M}_{j=0}B^{1,0}(\psi^{j})=\sum^{M}_{j=0}\left( S(\psi^{j})-\frac{1}{4}I(\psi^{j})\right)\\
&\leq S\left(\sum^{M}_{j=0}\varphi^{j}\right)+\eps-\frac{1}{4} I\left(\sum^{M}_{j=0}\varphi^{j}\right)+\frac{1}{4}\eps\leq \beta-\eta+2\eps<\beta,
\end{align*}
This is absurd. Therefore, we  infer that $I(\psi^{j})\geq 0$ for all $0\leq j\leq M$. In particular, $S(\psi^{j})=B^{1,0}(\psi^{j})+\frac{1}{2\sigma+2}I(\psi^{j})\geq 0$ and
\begin{equation*}
\sum^{M}_{j=0}S(\psi^{j})\leq S\left(\sum^{M}_{j=0}\psi^{j}\right)+\eps<\beta,
\end{equation*}
which implies that $S(\psi^{j})<\beta$. It follows (see Lemma~\ref{L3}) that $\psi^{j}\in \mathcal{K}^{+}$. This completes the proof.
\end{proof}


\subsection{Construction of a critical element}

We define the critical action level $\tau_{c}$ by
\begin{equation*}
\tau_{c}:=\sup\left\{\tau: S(\varphi)<\tau\,\, \text{and  $\varphi\in \mathcal{K}^{+}$ implies  $\|u\|_{\ell_{\gamma}^{p}L^{q}L^{r}}<\infty$}\right\}.
\end{equation*}
Here, $u(t)$ is the corresponding solution of \eqref{GP} with $u(0)=\varphi$.  We observe that $\tau_{c}$ is a strictly positive number. Indeed, if $\varphi\in \mathcal{K}^{+}$, by Lemmas \ref{L4} and \ref{axc}  we see that $\|e^{-itH}\varphi\|_{\ell_{\gamma}^{p}L^{q}L^{r}}\lesssim  \|\varphi\|_{B_{1}}\lesssim S(\varphi)$. Therefore, taking $\tau>0$ sufficiently small we obtain that  $\|u\|_{\ell_{\gamma}^{p}L^{q}L^{r}}<\infty$ by Lemma~\ref{ssp}. Hence $0<\tau_{c}\leq\beta$. We prove that $\tau_{c}=\beta$ by contradiction. 

We assume $\tau_{c}<\beta$.  By the definition of $\tau_{c}$, there exists a sequence of solutions $u_{k}$ to \eqref{GP} in $B_{1}$ with initial data $\phi_{k}\in \mathcal{K}^{+}$ such that $S(\phi_{k})\rightarrow \tau_{c}$ and $\|u_{k}\|_{\ell_{\gamma}^{p}L^{q}L^{r}}=\infty$.
In the next results, we construct a critical solution $u_{c}(t)\in
\mathcal{K}^{+}$ of \eqref{GP} such that $S(u_{c}(t))=\tau_{c}$ and
$\|u_{c}\|_{\ell_{\gamma}^{p}L^{q}L^{r}}=\infty$. Moreover, we prove that
there exists a continuous path $z(t)$ in $\mathbb{R}^{d-n}$ such that
the critical solution $u_{c}$ has the property that
$K=\left\{u_{c}(\cdot, \cdot-z(t))\right\}$ is precompact in
$B_{1}$. This is where the requirement $\si\ge \tfrac{1}{2}$ appears, in addition
to the previous assumption $\tfrac{2}{d-n}<\si<\tfrac{2}{d-2}$.

\begin{proposition}[Critical element]\label{P11ce}
  Let $n=1$ and  $\si\ge \tfrac{1}{2}$ with
  $\tfrac{2}{d-1}<\si<\tfrac{2}{d-2}$.
We assume that $\tau_{c}<\beta$. Then there exists $u_{c, 0}\in B_{1}$  such that the corresponding solution $u_{c}$ to \eqref{GP} with initial data $u_{c}(0)=u_{c, 0}$ satisfies $ u_{c}(t)\in \mathcal{K}^{+}$, $S(u_{c}(t))=\tau_{c}$ and $\|u_{c}\|_{\ell_{\gamma}^{p}L^{q}L^{r}}=\infty$.
\end{proposition}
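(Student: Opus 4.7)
The plan is to follow the Kenig--Merle concentration/compactness scheme, adapted to the partial confinement setting. By definition of $\tau_c<\beta$, we have a sequence of solutions $u_k$ with initial data $\phi_k\in\mathcal{K}^+$, $S(\phi_k)\to\tau_c$, and $\|u_k\|_{\ell^p_\gamma L^q L^r}=\infty$. Applying the linear profile decomposition (Proposition~\ref{Ls3}) to $\{\phi_k\}$, we extract profiles $\psi^j\in B_1$, time shifts $t^j_k\in\R$, spatial shifts $z^j_k\in\R^{d-1}$ and a remainder $W^M_k$ satisfying \eqref{El1}--\eqref{Ig}. Using \eqref{Sg}--\eqref{Ig} together with Lemma~\ref{apx} applied with $\eta>0$ chosen so that $2\eta<\beta-\tau_c$, each translated profile $e^{it^j_k H}\psi^j(\cdot,\cdot-z^j_k)$ lies in $\mathcal{K}^+$ for $k$ large, and $\sum_j S(\psi^j)\le \tau_c$.

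Next, I would build nonlinear profiles $U^j$. If $\{t^j_k\}$ is bounded (up to subsequence $t^j_k\to t^j$), let $U^j$ be the maximal solution of \eqref{GP} with initial data $e^{it^j H}\psi^j$. If $|t^j_k|\to\infty$, the bound $S(\psi^j)<\beta$ combined with Lemma~\ref{L4} gives the hypothesis \eqref{Imi111}, and Lemma~\ref{Wape} furnishes a global $U^j\in\mathcal{K}^+$ satisfying $\|U^j(\cdot-t^j_k)-e^{-i(\cdot-t^j_k)H}\psi^j\|_{B_1}\to 0$. In either case, if $S(\psi^j)<\tau_c$ strictly, the definition of $\tau_c$ forces $\|U^j\|_{\ell^p_\gamma L^q L^r}<\infty$.

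The heart of the proof is to show that exactly one profile appears with $S(\psi^1)=\tau_c$, and $W^M_k\to 0$ in the scattering norm \eqref{El2}. Suppose to the contrary that every profile satisfies $S(\psi^j)<\tau_c$. Form the approximate solution
\begin{equation*}
\tilde u_k^M(t,x)=\sum_{j=1}^M U^j\(t-t^j_k,\,y,\,z-z^j_k\)+e^{-itH}W^M_k.
\end{equation*}
I would verify the three hypotheses of the Perturbation Lemma~\ref{lpt}: (a) $\|\tilde u_k^M\|_{\ell^p_\gamma L^q L^r}$ is bounded uniformly in $k,M$, by orthogonality \eqref{El1} applied in the $\ell^p_\gamma L^q L^r$ norm combined with the triangle/Strichartz bound on the individual $U^j$'s and \eqref{El2}; (b) $e^{-itH}(u_k(0)-\tilde u_k^M(0))$ is small in $\ell^p_\gamma L^q L^r$ for $M$ large then $k$ large (this follows directly from the profile decomposition, with the Duhamel correction for the wave-operator profiles vanishing as $k\to\infty$); and (c) the residual error $e_k^M=(i\partial_t-H)\tilde u_k^M+|\tilde u_k^M|^{2\sigma}\tilde u_k^M$ is small in $\ell^{\tilde p'}_\gamma L^{\tilde q'} L^{r'}$. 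Then Lemma~\ref{lpt} gives $\|u_k\|_{\ell^p_\gamma L^q L^r}<\infty$ for $k$ large, contradicting the choice of $\phi_k$. Consequently $M^*=1$, $S(\psi^1)=\tau_c$ (and the remainder $W^1_k$ vanishes in $B_1$ by \eqref{El23}--\eqref{El4}). The case $|t^1_k|\to\infty$ is excluded by the scattering of the wave operator solution (Lemma~\ref{Wape}), so up to translating and replacing $u_{c,0}$ by $e^{it^1 H}\psi^1$ we obtain the critical datum; membership $u_c(t)\in\mathcal{K}^+$ for all $t$ follows from the flow invariance established in Section~\ref{S:2}, and $\|u_c\|_{\ell^p_\gamma L^q L^r}=\infty$ by a direct comparison with $u_k$.

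The main obstacle is step (c) above: expanding $|\tilde u_k^M|^{2\sigma}\tilde u_k^M-\sum_j|U^j(\cdot-t^j_k,\cdot-z^j_k)|^{2\sigma}U^j(\cdot-t^j_k,\cdot-z^j_k)$ and showing that all cross terms vanish in $\ell^{\tilde p'}_\gamma L^{\tilde q'}L^{r'}$ by the orthogonality \eqref{El1}. This decomposition rests on the pointwise inequality
\begin{equation*}
\Bigl|\,\bigl|\sum_j a_j\bigr|^{2\sigma}\bigl(\sum_j a_j\bigr)-\sum_j|a_j|^{2\sigma}a_j\,\Bigr|\lesssim \sum_{j\ne\ell}|a_j|^{2\sigma}|a_\ell|,
\end{equation*}
which fails for $\sigma<\tfrac12$; this is precisely where the additional hypothesis $\sigma\ge \tfrac12$ (hence $d\le 5$) is consumed. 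The orthogonality of parameters \eqref{El1} then kills each cross term $\||U^j|^{2\sigma}U^\ell(\cdot-t^j_k,\cdot-z^j_k;\cdot-t^\ell_k,\cdot-z^\ell_k)\|_{\ell^{\tilde p'}_\gamma L^{\tilde q'}L^{r'}}\to 0$ by a change of variables and the tightness of $U^j$ in the relevant spacetime norm.
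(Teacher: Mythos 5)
Your overall plan matches the paper's: concentration--compactness via the profile decomposition (Proposition~\ref{Ls3}), energy trapping via Lemma~\ref{apx}, nonlinear profiles built by local Cauchy theory or Lemma~\ref{Wape} depending on whether $|t^j_k|$ is bounded, and the perturbation Lemma~\ref{lpt} to preclude more than one profile at energy $\tau_c$. But there are three concrete problems with the writeup.

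First, and most importantly, your explanation of where $\si\ge\tfrac12$ enters is wrong. The pointwise inequality
$\bigl|\,|\sum_j a_j|^{2\si}\sum_j a_j-\sum_j|a_j|^{2\si}a_j\bigr|\lesssim\sum_{j\ne\ell}|a_j|^{2\si}|a_\ell|$
is valid for every $\si>0$ (with an $M$-dependent constant), as one sees by telescoping and the mean value theorem applied to the $C^1$ map $w\mapsto|w|^{2\si}w$; it does \emph{not} fail for $\si<\tfrac12$. The restriction $\si\ge\tfrac12$ is consumed elsewhere: in the uniform-in-$M$ bound on $\|u^M_k\|_{\ell^p_\gamma L^qL^r}$ (your step (a)), the Banica--Visciglia strategy reduces matters, after using the nonlinear orthogonality identity, to summing $\sum_j\|\mathrm{NLS}(\cdot)\psi^j_\ast\|_{\ell^p_\gamma L^qL^r}^{2\si+1}$. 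Via small-data theory and Lemma~\ref{axc} this is controlled by $\sum_j\|\psi^j_\ast\|_{B_1}^{2\si+1}$, and the orthogonality \eqref{El23}--\eqref{El4} only gives $\sum_j\|\psi^j_\ast\|_{B_1}^2<\infty$. Comparing $2\si+1$ with $2$ is precisely where $\si\ge\tfrac12$ is needed. Your step (a) as written (``triangle/Strichartz bound on the individual $U^j$'s'') does not yield an $M$-independent bound and misses this mechanism entirely; note also that the paper explicitly points out that the usual interpolation against $L^\gamma_{t,x}$-type norms is unavailable here because $q>p$, which is what forces the Banica--Visciglia route.

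Second, the statement ``The case $|t^1_k|\to\infty$ is excluded by the scattering of the wave operator solution'' is incorrect at this stage. In the construction of the critical element, $|t^1_k|\to\infty$ is \emph{not} excluded; the paper handles it by invoking Lemma~\ref{Wape} (using \eqref{Lf11si}, and Lemma~\ref{L4} to verify \eqref{Imi111}) to produce $\psi^\ast$ with $\mathrm{NLS}(t)\psi^\ast\in\mathcal{K}^+$, and then takes $u_{c,0}=\psi^\ast$. The exclusion of $|t^1_k|\to\infty$ is deferred to the compactness Lemma~\ref{compass}, where $\phi_k=u_c(t_k)$ for a fixed critical element and the half-line norms $\ell^p_{\gamma\gtrless 1}L^qL^r$ yield a contradiction; that argument is not available while constructing $u_c$.

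Third, a smaller point: you put $e^{-itH}W^M_k$ inside the approximate solution $\tilde u_k^M$. The paper instead sets $u^M_k(t)=\sum_j\mathrm{NLS}(t-t^j_k)\psi^j_\ast(\cdot,\cdot-z^j_k)$ and absorbs the remainder into the initial-data discrepancy $u_k(0)-u^M_k(0)=\tilde W^M_k$, which is precisely what the perturbation lemma is designed to handle. Your variant forces you to control additional cross terms between $e^{-itH}W^M_k$ and the profiles in the residual $e^M_k$, which is a nontrivial extra verification you have not supplied.
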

\begin{proof}
Since $S(\phi_{k})\rightarrow \tau_{c}$, from Lemma \ref{L4} we see
that $\left\{\phi_{k}\right\}^{\infty}_{k=1}$ is bounded in
$B_{1}$. Indeed, $\|\phi_{k}\|_{B_{1}}\lesssim
S(\phi_{k})$, and $S(\phi_{k})\leq\beta$. Thus, by Proposition
\ref{Ls3}, up to extracting to a 
subsequence, we get  
\begin{equation}\label{Exs}
\phi_{k}=\sum^{M}_{j=1}e^{it^{j}_{k}H}\psi^{j}(\cdot, \cdot-z_{k})+W^{M}_{k} \quad \text{for all $M\in\mathbb{N}$},
\end{equation}
and the sequence satisfies 
\begin{align*}
S(\phi_{k})&=\sum^{M}_{j=1}S\left( e^{it^{j}_{k}H}\psi^{j}\right)+S(W^{M}_{k})+o_{k}(1),\\    
I(\phi_{k})&=\sum^{M}_{j=1}I\left( e^{it^{j}_{k}H}\psi^{j}\right)+I(W^{M}_{k})+o_{k}(1).
\end{align*}
By using the fact that $\phi_{k}\in \mathcal{K}^{+}$, we infer that there exists $\eps$, $\eta>0$ such that $2\eps<\eta$ and 
\begin{align*}
S(\phi_{k})&\leq \beta-\eta,\\  
S(\phi_{k})&\geq \sum^{M}_{j=1}S\left( e^{it^{j}_{k}H}\psi^{j}\right)+S(W^{M}_{k})-\eps,\\
I(\phi_{k})&\geq-\eps,\\
I(\phi_{k})&\leq \sum^{M}_{j=1}I\left( e^{it^{j}_{k}H}\psi^{j}\right)+I(W^{M}_{k})+\eps
\end{align*}
for sufficiently large  $k$. Thus, from Lemma~\ref{apx} we obtain that 
\begin{equation}\label{impor}
e^{it^{j}_{k}H}\psi^{j}\in \mathcal{K}^{+},\quad W^{M}_{k}\in \mathcal{K}^{+} \quad \text{for sufficiently large $k$.}
\end{equation}
This implies that $S( e^{it^{j}_{k}H}\psi^{j})\geq 0$, $S(W^{M}_{k})\geq 0$ and for each $1\leq j \leq M$,
\begin{equation}\label{icd}
0\leq \limsup_{k\rightarrow\infty}S( e^{it^{j}_{k}H}\psi^{j})\leq \limsup_{k\rightarrow\infty}S(\phi_{k})=\tau_{c}. 
\end{equation}
Now we have two cases: (i)~$\limsup_{k\rightarrow\infty}S(
e^{it^{j}_{k}H}\psi^{j})=\tau_{c}$ fails for all $j$, or (ii)~equality
holds in \eqref{icd} for some $j$.\\
\textbf{Case (i)}:  In this case, for  each $1\leq j \leq M$ there exists $\eta_{j}>0$ such that
\begin{equation}\label{cc1}
\limsup_{k\rightarrow\infty}S( e^{it^{j}_{k}H}\psi^{j})\leq\tau_{c}-\eta_j, \quad S( e^{it^{j}_{k}H}\psi^{j})\geq 0, \quad I( e^{it^{j}_{k}H}\psi^{j})\geq 0.
\end{equation}                                                   
Suppose that $t^{j}_{k}\rightarrow t^{\ast}$. If $t^{\ast}<\infty$ for
some $j$ (at most one such $j$ exists by the orthogonality of the
parameters \eqref{El1}), then from the continuity of the linear flow
we infer that 
\begin{equation}\label{Lf}
e^{it^{j}_{k}H}\psi^{j}\Tend k \infty e^{it^{\ast}H}\psi^{j}\quad \text{strongly in $B_{1}$.}
\end{equation}
We set $\psi^{j}_{\ast}=\text{NLS}(t^{\ast})(e^{it^{\ast}H}\psi^{j})$,
where we recall that $\text{NLS}(t)\varphi$ denotes the solution to \eqref{GP} with
  initial datum $u_0=\varphi$. Notice that $\text{NLS}(-t^{\ast})\psi^{j}_{\ast}=e^{it^{\ast}H}\psi^{j}$. Moreover, by \eqref{impor} and \eqref{cc1} we have that $\psi^{j}_{\ast}\in \mathcal{K}^{+}$ and $S( \psi^{j}_{\ast})<\tau_{c}$. Thus, by definition of $\tau_{c}$ we get $\|\text{NLS}(\cdot)\psi^{j}_{\ast}\|_{\ell_{\gamma}^{p}L^{q}L^{r}}<\infty$. Finally, by \eqref{Lf} we obtain
\begin{equation}\label{Lf11}
\|\text{NLS}(-t^{j}_{k})\psi^{j}_{\ast}-e^{it^{\ast}H}\psi^{j}\|_{B_{1}}\rightarrow 0 \quad \text{as $k\rightarrow\infty$.}
\end{equation}
On the other hand, suppose that $|t^{j}_{k}|\to\infty$:
$\|e^{it^{j}_{k}H}\psi^{j}\|_{L^{2\sigma+2}}\to 0$, and therefore
\begin{equation}\label{Lf11si}
\lim_{k\rightarrow\infty}S\(e^{it^{j}_{k}H}\psi^{j}\)=\frac{1}{2}\|\psi^{j}\|^{2}_{B_{1}}<\tau_{c}<\beta.
\end{equation}
By Lemma \ref{Wape}, there exists $\psi^{j}_{\ast}$ such that $\psi^{j}_{\ast}\in \mathcal{K}^{+}$ and
\begin{equation}\label{Lsd}
\|\text{NLS}(-t^{j}_{k})\psi^{j}_{\ast}-e^{it^{j}_{k}H}\psi^{j}\|_{B_{1}}\Tend
k \infty 0.
\end{equation}
Moreover, by \eqref{Lf11si} we have $S(\psi_{\ast}^{j})=\frac{1}{2}\|\psi^{j}\|^{2}_{B_{1}}<\tau_{c}$. Again, by definition of $\tau_{c}$ we see that $\|\text{NLS}(\cdot)\psi^{j}_{\ast}\|_{\ell_{\gamma}^{p}L^{q}L^{r}}<\infty$.

In either case, we obtain a new profile  $\psi^{j}_{\ast}$ for the given $\psi^{j}$ such that \eqref{Lsd} holds and $\|\text{NLS}(\cdot)\psi^{j}_{\ast}\|_{\ell_{\gamma}^{p}L^{q}L^{r}}<\infty$. We rewrite $\phi_{k}$ as follows (see \eqref{Exs}):
\begin{equation*}
\phi_{k}=\sum^{M}_{j=1}\text{NLS}(-t^{j}_{k})\psi^{j}_{\ast}(\cdot,\cdot-z^{j}_{k})+\tilde{W}^{M}_{k},
\end{equation*}
where
\begin{equation}\label{Lxz}
\tilde{W}^{M}_{k}=\sum^{M}_{j=1}\left[e^{it^{j}_{k}H}\psi^{j}(\cdot,\cdot-z^{j}_{k})-\text{NLS}(-t^{j}_{k})\psi^{j}_{\ast}(\cdot,\cdot-z^{j}_{k})\right]+{W}^{M}_{k}.
\end{equation}
We observe that by Lemma~\ref{axc},
\begin{gather*}
\|e^{-itH}\tilde{W}^{M}_{k}\|_{\ell_{\gamma}^{p}L^{q}L^{r}}\lesssim\sum^{M}_{j=1}\| e^{-it_k^jH}\psi^{j}-\text{NLS}(-t^{j}_{k})\psi^{j}_{\ast}\|_{B_{1}}+\|e^{-itH}{W}^{M}_{k}\|_{\ell_{\gamma}^{p}L^{q}L^{r}}.
\end{gather*}
Thus, we have
\begin{equation}\label{LLi}
\lim_{M\rightarrow \infty}\(\lim_{k\rightarrow \infty}\|e^{-itH}\tilde{W}^{M}_{k}\|_{\ell_{\gamma}^{p}L^{q}L^{r}}\)=0.
\end{equation}
The idea now is to approximate
\begin{equation*}
\text{NLS}(t)\phi_{k}\approx\sum^{M}_{j=1}\text{NLS}(t-t^{j}_{k})\psi^{j}_{\ast}(\cdot,\cdot-z^{j}_{k}),
\end{equation*}
and use the  approximation theory from Lemma~\ref{lpt} to obtain $\|\text{NLS}(\cdot)\phi_{k}\|_{\ell_{\gamma}^{p}L^{q}L^{r}}<\infty$, which is a contradiction. With this in mind, we define
\begin{equation*}
u_{k}(t)=\text{NLS}(t)\phi_{k}, \quad v^{j}_{k}(t)=\text{NLS}(t-t^{j}_{k})\psi^{j}_{\ast}(\cdot,\cdot-z^{j}_{k}), \quad {u}^{M}_{k}(t)=\sum^{M}_{j=1}v_{k}^{j}(t).
\end{equation*}
A simple calculation shows that $i\partial_{t}{u}^{M}_{k}-H{u}_{k}^{M}+|{u}^{M}_{k}|^{2\sigma}{u}^{M}_{k}=e_{k}^{M}$, where
\begin{equation*}
e_{k}^{M}=|{u}^{M}_{k}|^{2\sigma}{u}^{M}_{k}-\sum^{M}_{j=1}|v_{k}^{j}|^{2\sigma}v_{k}^{j}.
\end{equation*}
and
\begin{equation}\label{dp}
u_{k}(0)-{u}_{k}^{M}(0)=\tilde{W}^{M}_{k}.
\end{equation}
We rely on the following two claims.

\textbf{Claim 1.} There exists $A>0$ (independent of $M$) such that for each $M$, there exists $k_{1}=k_{1}(M)$ with the following property: if $k>k_{1}$ then we have the following estimate
\begin{equation}\label{cla123}
\|u_{k}^{M}\|_{\ell_{\gamma}^{p}L^{q}L^{r}}\leq A.
\end{equation}

\textbf{Claim 2.} 
There exists $k_{2}=k_{2}(M, \eps(A))$ such that if $k>k_{2}$, then we have the following estimate
\begin{equation}\label{cla1}
\|e_{k}^{M}\|_{\ell_{\gamma}^{\tilde{p}'}L^{\tilde{q}'}L^{r'}}\leq \eps(A),
\end{equation}
where $A$ is given by \eqref{cla123} and $\eps(A)$ is the
associate value provided by Lemma~\ref{lpt}.
\smallbreak

To prove Claim~1, we note that following the same strategy as in
e.g. \cite{Keraani01,KenigMerle2006,HolmerRoudenko2008,FangXieCaze2011},
relying on an interpolation of the norm involved in the asymptotic
smallness of $W_k^M$ (\eqref{El2}, in our case) by norms of the form
$L^\gamma_{t,x}$ and $L^\infty H^1$, seems doomed. Indeed, since
$q>p$, it does not seem easy to control the $\ell_\gamma^pL^qL^r$ in
the fashion. However, as noticed in \cite{BanicaVisciglia2016}, it is
possible to do without, by just using the fact that the Lebesgue
exponents at stake are all finite. We therefore resume the main ideas
from \cite[Appendix~A]{BanicaVisciglia2016}, to obtain
\begin{equation}
  \label{eq:BV16-app}
  \limsup_{k \to \infty} \|u_{k}^{M}\|_{\ell_{\gamma}^{p}L^{q}L^{r}}^{2\si+1} \le
  2\sum_{j=1}^M \|
  \text{NLS}(\cdot)\psi_\ast^j\|_{\ell_{\gamma}^{p}L^{q}L^{r}}^{2\si+1}. 
\end{equation}
Recall the identities $\tilde{p}'=(2\sigma+1)p$, $\tilde{q}'=(2\sigma+1)q$ and $r'=(2\sigma+1)r$.
To prove \eqref{eq:BV16-app}, we first notice that if $f_1,f_2\in
C(\R;B_1)\cap \ell_{\gamma}^{p}L^{q}L^{r}$ and
\begin{equation*}
  |t_k-s_k|+|z_k-\eta_k|\Tend  k \infty \infty,
\end{equation*}
then
\begin{equation}\label{eq:propA1}
  \left\|
    |f_1(t-t_k,y,z-z_k)|^{2\si}f_2(t-s_k,y,z-\zeta_k)\right\|_{\ell_{\gamma}^{\tilde
      p'}L^{\tilde q'}L^{r'}}\Tend k\infty 0.
\end{equation}
Indeed, H\"older inequality in space yields
\begin{align*}
  \Big\|
    |f_1(t-t_k,y,z-z_k)|^{2\si}f_2(t-s_k,y,& z-\zeta_k)\Big\|_{\ell_{\gamma}^{\tilde
  p'}L^{\tilde q'}L^{r'}}\\
  &\le \left\|
    \|f_1(t-t_k)\|_{L^r}^{2\si}\|f_2(t-s_k)\|_{L^r}\right\|_{\ell_{\gamma}^{\tilde
      p'}L^{\tilde q'}},
\end{align*}
 and \eqref{eq:propA1} follows in the case $|t_k-s_k|\Tend  k \infty
 \infty$, since $\tilde p'$ and $\tilde q'$ are finite. In the case
 where this sequence is bounded, for $\gamma_0\ge 1$, H\"older
 inequality in space and time yields
 \begin{align*}
   \Big\|
    |f_1(t,y,z-z_k)|^{2\si}&f_2(t+t_k-s_k,y,
     z-\zeta_k)\Big\|_{\ell_{|\gamma|\ge \gamma_0}^{\tilde
  p'}L^{\tilde q'}L^{r'}}\\
   &\le 
    \|f_1\|_{\ell_{|\gamma|\ge \gamma_0}^{
  p}L^{ q}L^{r}}^{2\si}\|f_2(t+t_k-s_k)\|_{\ell_{|\gamma|\ge \gamma_0}^{
  p}L^{ q}L^{r}}\Tend {\gamma_0} \infty 0.
 \end{align*}
 Now for $t$ fixed,
 \begin{align*}
   \Big\|
    |f_1(t,y,z-z_k)|^{2\si}&f_2(t+t_k-s_k,y,
     z-\zeta_k)\Big\|_{L_x^{r'}}\\
   &=\Big\|
    |f_1(t,y,z)|^{2\si}f_2(t+t_k-s_k,y,
     z+z_k-\zeta_k)\Big\|_{L_x^{r'}}\Tend k\infty 0 ,
  \end{align*}
 since $|z_k-\zeta_k|\to \infty$,  $|f_1(t,\cdot)|^{2\si}\in L^{\tfrac{r}{2\si}}$ for \emph{all}
 $t$, using the property 
 $f_1\in C_t H^1$ and Sobolev embedding, and, for the same reason,
 \begin{equation*}
   \left\{ f_2(t+t_k-s_k,y,
     z+z_k-\zeta_k),\ k\in \N\right\} \text{ is compact in }L^r,\quad
   \forall t. 
 \end{equation*}
 Invoking H\"older inequality in space again,
 \begin{align*}
  \Big\|
    |f_1(t,y,z)|^{2\si}f_2(t+t_k-s_k,y,& z+z_k-\zeta_k)\Big\|_{L^{r'}}\\
  &\le 
    \|f_1(t)\|_{L^r}^{2\si}\|f_2(t+t_k-s_k)\|_{L^r},
 \end{align*}
 the Lebesgue dominated convergence theorem implies, for any given
 $\gamma_0\ge 1$,
 \begin{align*}
   \left\|
    |f_1(t,y,z-z_k)|^{2\si}f_2(t+t_k-s_k,y,
     z-\zeta_k)\right\|_{\ell_{|\gamma|\le \gamma_0}^{\tilde
  p'}L^{\tilde q'}L^{r'}}\Tend k \infty 0,
 \end{align*}
 hence \eqref{eq:propA1}.
 Now we observe that for $M\geq2$, there exists a constant $C_{M}>0$ such that
\begin{equation}\label{eq:orth}
\Bigg|
  \Big|\sum^{M}_{j=1}z_{j}\Big|^{2\sigma}\sum^{M}_{j=1}z_{j}-\sum^{M}_{j=1}|z_{j}|^{2\sigma}z_{j}\Bigg|\leq
C_{M}\sum_{1\leq j\neq \ell\leq M}|z_{j}|^{2\sigma}|z_{\ell}|.   
\end{equation}
Writing
\begin{align*}
 \|u_{k}^{M}\|_{\ell_{\gamma}^{p}L^{q}L^{r}}^{2\si+1} &= \Big\| \sum_{j=1}^M
 \text{NLS}(t-t_k^j)\psi_\ast^j(\cdot,\cdot-z_k^j)\Big\|_{\ell_{\gamma}^{p}L^{q}L^{r}}^{2\si+1} 
  \\ 
& \le  \Big\| \Big(\sum_{j=1}^M
        \big|\text{NLS}(t-t_k^j)\psi_\ast^j(\cdot,\cdot-z_k^j)\big|\Big)^{2\si+1}
        \Big\|_{\ell_{\gamma}^{\tilde p'}L^{\tilde
        q'}L^{r'}}\\
& \le  \Big\| \sum_{j=1}^M
 \Big| \text{NLS}(t-t_k^j)\psi_\ast^j(\cdot,\cdot-z_k^j )\Big|^{2\si+1}
 \Big\|_{\ell_{\gamma}^{\tilde p'}L^{\tilde  q'}L^{r'}}\\
   + \Big\| \Big(\sum_{j=1}^M
       \big| \text{NLS}(t-t_k^j)\psi_\ast^j(\cdot,\cdot-z_k^j&)\big|\Big)^{2\si+1}
    -\sum_{j=1}^M
 \Big| \text{NLS}(t-t_k^j)\psi_\ast^j(\cdot,\cdot-z_k^j)\Big|^{2\si+1}
    \Big\|_{\ell_{\gamma}^{\tilde p'}L^{\tilde
       q'}L^{r'}}.
\end{align*}
The last term goes to zero as $k\to \infty$, from \eqref{eq:propA1}
and \eqref{eq:orth}, hence \eqref{eq:BV16-app}
thanks to triangle inequality. Now using \eqref{El4} and \eqref{Lf11},
there exists $M_0$ such that
\begin{equation*}
  \|\psi_\ast^j\|_{B_1}\le \nu,\quad \forall j\ge M_0,
\end{equation*}
where $\nu$ is given by Lemma~\ref{ssp}. Lemma~\ref{ssp} then implies,
for all $j\ge M_0$,
\begin{equation*}
  \|
  \text{NLS}(\cdot)\psi_\ast^j\|_{\ell_{\gamma}^{p}L^{q}L^{r}}\le 2
  \|e^{-itH}\psi_\ast^j\|_{\ell_{\gamma}^{p}L^{q}L^{r}} \lesssim \|\psi_\ast^j\|_{B_1},
\end{equation*}
where we have used Lemma~\ref{axc}. For $\si\ge \tfrac{1}{2}$, we infer
\begin{equation*}
  \sum_{j=M_0}^\infty
  \|\text{NLS}(\cdot)\psi_\ast^j\|_{\ell_{\gamma}^{p}L^{q}L^{r}}^{2\si+1}
  \lesssim \sum_{j=M_0}^\infty
  \|\psi_\ast^j\|_{B_1}^{2\si+1}\lesssim
  \sum_{j=M_0}^\infty 
  \|\psi_\ast^j\|_{B_1}^{2}<\infty.
\end{equation*}
Now for $j<M_0$, we have seen  that
\begin{equation*}
   \|
  \text{NLS}(\cdot)\psi_\ast^j\|_{\ell_{\gamma}^{p}L^{q}L^{r}}<\infty,
\end{equation*}
hence Claim~1. 
 Claim~2 then follows from  \eqref{eq:propA1} and \eqref{eq:orth}. 

Next notice that combining \eqref{dp} and \eqref{LLi} we infer that for $\eps(A)$ there exists $M_{1}=M_{1}(\eps)$ such that for any $M>M_{1}$, then there exists $k_{3}=k_{3}(M_{1})$ such that if $k>k_{3}$
then we obtain
\begin{equation}\label{cla12}
\|e^{-itH}(u_{k}(0)-{u}_{k}^{M}(0))\|_{\ell_{\gamma}^{p}L^{q}L^{r}}\leq \eps(A).
\end{equation}
Therefore, by \eqref{cla123}, \eqref{cla1} and \eqref{cla12} we see that for $k\geq \max\left\{k_{1}, k_{2}, k_{3}\right\}$ we obtain that
$\|u_{k}^{M}\|_{\ell_{\gamma}^{p}L^{q}L^{r}}\leq A$,
$\|e_{k}^{M}\|_{\ell_{\gamma}^{p'}L^{q'}L^{r'}}\leq \eps(A)$ and
$\|e^{-itH}(u_{k}(0)-{u}_{k}^{M}(0))\|_{\ell_{\gamma}^{p}L^{q}L^{r}}\leq
\eps(A)$. Thus by Lemma~\ref{lpt} we get
$\|\phi_{k}\|_{\ell_{\gamma}^{p}L^{q}L^{r}}<\infty$, which is
absurd.\\ 
\textbf{Case (ii)}: We note that if equality  holds in \eqref{icd} for
some $j$ (we may assume $j=1$ by reordering), then $M=1$. In particular, $\limsup_{k\rightarrow\infty}S(W^{1}_{k})=0$. Since $S(W^{1}_{k})\sim \|W^{1}_{k}\|^{2}_{B_{1}}$ (see Lemma \ref{L4}), we have that $W^{1}_{k}\rightarrow0$ in $B_{1}$.
Thus  $\left\{\phi_{k}\right\}^{\infty}_{k=1}$ has only one nonlinear profile 
\begin{equation}\label{Pre}
\phi_{k}=e^{it^{1}_{k}H}\psi^{1}(\cdot, \cdot-z_{k})+W^{1}_{k}\quad \text{and $W^{1}_{k}\rightarrow0$ in $B_{1}$.}
\end{equation}
Suppose that $t^{1}_{k}\rightarrow t^{\ast}$. If $|t^{\ast}|<\infty$ (we
may then assume $t^{\ast}=0$),  we put $\psi^{\ast}=\psi^{1}$. Then as $k\rightarrow \infty$, $\|e^{it^{1}_{k}H}\psi^{1}-\text{NLS}(-t^{1}_{k})\psi^{\ast}\|_{B_{1}}\rightarrow 0$. Now if $|t^{\ast}|=\infty$, then $\|e^{it^{1}_{k}H}\psi^{1}\|_{L^{2\sigma+2}}\rightarrow 0$. This implies that 
\begin{equation*}
\frac{1}{2}\|\psi^{1}\|^{2}_{{B}^{1}} =\frac{1}{2}\| e^{it^{1}_{k}H}\psi^{1}\|^{2}_{{B}^{1}} =\lim_{k\rightarrow \infty}S\left( e^{it^{1}_{k}H}\psi^{1}\right)=\tau_{c}<\beta.
\end{equation*}
Thus, by Lemma \ref{Wape} there exists $\psi^{\ast}$  such that the
corresponding solution $\text{NLS}(t)\psi^{\ast}\in \mathcal K^{+}$ for all $t\in \mathbb{R}$ and
\begin{equation*}
\|e^{it^{1}_{k}H}\psi^{1}-\text{NLS}(-t^{1}_{k})\psi^{\ast}\|_{B_{1}}\rightarrow 0\quad \text{as $k\rightarrow\infty$.}
\end{equation*}
In either case, we set $u_{c,0}:=\psi^{\ast}$. We note that
$u_{c,0}\in \mathcal K^{+}$ and $S(u_{c,0})=S(\psi^{\ast})=\tau_{c}$. By \eqref{Pre} we can rewrite $\phi_{k}$ as
\begin{equation*}
\phi_{k}=\text{NLS}(-t^{1}_{k})\psi^{\ast}+\tilde{W}^{1}_{k},
\end{equation*}
where $\tilde{W}^{1}_{k}={W}^{1}_{k}+e^{it^{1}_{k}H}\psi^{1}-\text{NLS}(-t^{1}_{k})\psi^{\ast}$. Since $W^{1}_{k}\rightarrow0$ in $B_{1}$, it follows by Lemma \ref{axc}
\begin{equation*}
\lim_{M\rightarrow \infty}\left\{\lim_{k\rightarrow \infty}\|e^{-itH}\tilde{{W}}^{M}_{k}\|_{\ell_{\gamma}^{p}L^{q}L^{r}}\right\}=0.
\end{equation*}
Therefore, by the same argument as above (Case (i)) we infer that $\|u_{c}\|_{\ell_{\gamma}^{p}L^{q}L^{r}}=\infty$, which proves the proposition.
\end{proof}

\subsection{Extinction of the critical element}
In this subsection, we assume that $\|u\|_{\ell^{{p}}_{\gamma\geq 1}L^{q}L^{r}}=\infty$; we call it a forward critical element. 
We remark that the same argument as below does work in the case $\|u\|_{\ell^{{p}}_{\gamma\leq 1}L^{q}L^{r}}=\infty$.

\begin{lemma}\label{final} 
Let $u_{c}$ be the critical element given in Proposition \ref{P11ce}. Then $u_{c}=0$.
\end{lemma}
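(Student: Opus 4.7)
The plan is to derive a contradiction from the assumption $u_c\not\equiv 0$ via a localized virial argument in the $z$-variable, exploiting the precompactness of the orbit $K=\{u_c(t,y,z-z(t))\}$ in $B_1$ asserted in Proposition~\ref{P11ce}. First I would observe that if $u_c\not\equiv 0$, then by uniqueness $u_c(t)\neq 0$ for every $t$, and Lemma~\ref{L4} together with the conservation $S(u_c(t))\equiv \tau_c>0$ yields a uniform lower bound $\|u_c(t)\|_{B_1}\ge c_1>0$.

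The first key step is to establish the uniform positive lower bound
\[
\inf_{t\ge 0} P(u_c(t))=:2c_0>0.
\]
If this failed, a sequence $t_n\to\infty$ with $P(u_c(t_n))\to 0$ would, via precompactness of $K$, produce a strong limit $\varphi=\lim_{n}u_c(t_n,y,z-z(t_n))$ in $B_1$ satisfying $P(\varphi)=0$, $S(\varphi)=\tau_c<\beta$, and $\|\varphi\|_{B_1}\ge c_1>0$; then Remark~\ref{Remarp} forces $S(\varphi)\ge\beta$, a contradiction. Next I would reduce to zero momentum $G(u_c)\equiv 0$ by applying a Galilean boost in $z$ (a symmetry of \eqref{GP} since there is no potential acting on the $z$ variable), and from this, together with conservation of mass and the precompactness of $K$, deduce the standard modulation control $|z(t)|=o(t)$ as $t\to\infty$.

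With these ingredients I would then run the localized virial computation from Section~\ref{S:2}. Take a radial cutoff $\phi_R\in C^4(\mathbb{R}^{d-n})$ with $\phi_R(r)=r^2$ for $r\le R$, constant for $r\ge 2R$, and $\phi_R''\le 2$, $\phi_R^{(4)}=O(R^{-2})$, and set $V_R(t)=\int_{\mathbb{R}^d}\phi_R(z)|u_c(t,x)|^2\,dx$. Exactly as in \eqref{LV3},
\[
V_R''(t)=4(d-n)P(u_c(t))+\mathcal{E}_R(t),
\]
where $\mathcal{E}_R(t)$ collects terms supported in $\{|z|\ge R\}$. Choosing $R(T)=R_0+\sup_{t\in[0,T]}|z(t)|$ with $R_0$ large enough that, by precompactness of $K$, the quantity $\int_{|z-z(t)|>R_0}(|\nabla_z u_c|^2+|u_c|^{2\sigma+2}+|u_c|^2)\,dx$ is arbitrarily small uniformly in $t$, makes $|\mathcal{E}_{R(T)}(t)|$ negligible and yields $V_{R(T)}''(t)\ge 4(d-n)c_0$ on $[0,T]$. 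Integrating twice gives $V_{R(T)}(T)\ge 2(d-n)c_0\,T^2+O(R(T)T)$, whereas the trivial bound $V_{R(T)}(T)\lesssim R(T)^2\,M(u_c)$ combined with $R(T)=R_0+o(T)=o(T)$ forces $V_{R(T)}(T)=o(T^2)$, a contradiction for $T$ large.

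The main obstacle will be the modulation control $|z(t)|=o(t)$, whose derivation in the Kenig--Merle framework uses the conservation of momentum (after the Galilean reduction in $z$) together with a compactness argument; in the partial-confinement setting this still works because the $z$-momentum is conserved and the confinement acts only in $y$, but the details require some care. A secondary technical point is to check that the errors $\mathcal{E}_{R(T)}$ are genuinely controlled by the mass and energy sitting outside $\{|z-z(t)|\le R_0\}$, after accounting for the time-dependent shift $z(t)$ in the cutoff.
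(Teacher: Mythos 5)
Your proposal is correct and follows essentially the same path as the paper's proof: Galilean reduction to zero $z$-momentum via the minimality of $\tau_c$, the modulation bound $|z(t)|=o(t)$ from the truncated center-of-mass argument, the uniform lower bound on $P(u_c(t))$ via precompactness and Remark~\ref{Remarp}, and a localized virial computation in the $z$-variable with a cutoff radius that grows slowly relative to the time window. The only cosmetic differences are the ordering of the preliminary steps and the final integration: the paper integrates $V''$ once and uses $|V'|\lesssim R$ with $R$ growing linearly in $t$ at a small slope, whereas you integrate twice and use $V\lesssim R^2$ with $R(T)=o(T)$; both closings are standard and equivalent.
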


To prove Lemma \ref{final}, we need the following result.
\begin{lemma}\label{compass}
Let $u_{c}$  be the critical element given in Proposition \ref{P11ce}. Then there exists a function $z\in C([0, \infty);\mathbb{R}^{d-n})$ such that $\left\{u_{c}(t,\cdot,\cdot-z(t)); t\geq 0\right\}$ is relatively compact in $B_{1}$. In particular, we have the uniform localization of $u_{c}$:
\begin{equation}\label{Als}
\sup_{t\geq 0}\int_{|z+z(t)|>R}\left[|\nabla
  u(t,x)|^{2}+|u(t,x)|^{2\sigma+2}+|u(t,x)|^{2}\right]dx\Tend R \infty
0.
\end{equation}
\end{lemma}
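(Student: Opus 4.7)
The plan is to prove Lemma \ref{compass} by a compactness/contradiction argument based on the linear profile decomposition (Proposition \ref{Ls3}), essentially reproducing the minimality analysis from the proof of Proposition \ref{P11ce} but now applied to time-translated data $u_c(t_n)$ for an arbitrary sequence $t_n\to\infty$.

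\medskip

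\emph{Step 1 (compactness of the trajectory modulo $z$-translation).} The goal is to show that for every sequence $\{t_n\}\subset[0,\infty)$ there exist $z_n\in\R^{d-n}$ and $\Phi\in B_1$ such that, up to a subsequence, $u_c(t_n,\cdot,\cdot-z_n)\to\Phi$ strongly in $B_1$. Since $S$ and $M$ are conserved and $u_c(t)\in\mathcal{K}^+$, Lemma \ref{L4} gives $\|\phi_n\|_{B_1}^2\lesssim S(\phi_n)=\tau_c$ for $\phi_n:=u_c(t_n)$. Apply Proposition \ref{Ls3} to $\{\phi_n\}$ and reproduce the dichotomy from the proof of Proposition \ref{P11ce}. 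Case (i) --- no profile carries the full action $\tau_c$ --- gives nonlinear profiles $\psi^j_\ast$ with $S(\psi^j_\ast)<\tau_c$, hence $\text{NLS}(\cdot)\psi^j_\ast$ has finite $\ell^p L^q L^r$ norm by the very definition of $\tau_c$; Lemma \ref{lpt} then yields $\|\text{NLS}(\cdot)\phi_n\|_{\ell^p_{\gamma\geq 0}L^qL^r}\leq A$ uniformly in $n$. But $\|\text{NLS}(\cdot)\phi_n\|_{\ell^p_{\gamma\geq 0}L^qL^r}$ equals the $\ell^p$-tail of $\|u_c\|_{\ell^p_{\gamma\geq 1}L^qL^r}=\infty$ and is therefore still infinite --- a contradiction. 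Hence Case (ii) holds: a single nontrivial profile $\psi^1$ with $\lim_n S(e^{it_n^1H}\psi^1)=\tau_c$ and $W_n^1\to 0$ in $B_1$. The boundedness of $t_n^1$ is then argued as in Case (ii) of Proposition \ref{P11ce}: if $|t_n^1|\to\infty$, Lemma \ref{lem:dispLp} gives $\tfrac12\|\psi^1\|_{B_1}^2=\tau_c<\beta$, and Lemma \ref{Wape} (applied backward in time if $t_n^1\to+\infty$) produces $\psi^\ast\in\mathcal{K}^+$ with $\|\text{NLS}(-t_n^1)\psi^\ast-e^{it_n^1H}\psi^1\|_{B_1}\to 0$; combined with the half-line scattering of $\psi^\ast$ furnished by the fixed-point proof of Lemma \ref{Wape} and Lemma \ref{lpt}, this forces $\|\text{NLS}(\cdot)\phi_n\|_{\ell^p_{\gamma\geq 0}L^qL^r}$ to stay bounded on a time window of length $\sim t_n^1$, which upon a judicious choice of the sequence $t_n$ (as in the Kenig-Merle scheme) contradicts $\|u_c\|_{\ell^p_{\gamma\geq 1}L^qL^r}=\infty$. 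Therefore $t_n^1\to t^\ast\in\R$ up to a subsequence, and continuity of $e^{-itH}$ yields $u_c(t_n,\cdot,\cdot+z_n^1)\to e^{it^\ast H}\psi^1$ strongly in $B_1$.

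\medskip

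\emph{Step 2 (continuous selection of $z(t)$).} Step 1 shows that the orbit of $t\mapsto u_c(t)$ in the quotient $B_1/\R^{d-n}$ is relatively compact. To produce a \emph{continuous} path $z(t)$, I would follow the standard Kenig-Merle/Holmer-Roudenko continuous-selection procedure: fix $\eta_0,R_0>0$ so that the tightness implied by the precompactness in Step 1 guarantees, for every $t\geq 0$, at least one $z(t)\in\R^{d-n}$ with $\int_{|z-z(t)|\leq R_0}|u_c(t,x)|^2\,dx\geq\eta_0$, and that any two valid centers differ by a bounded amount. Continuity of $t\mapsto u_c(t)$ in $B_1$ together with the uniform bound on the ambiguity of $z(t)$ then yields $z\in C([0,\infty);\R^{d-n})$.

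\medskip

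\emph{Step 3 (uniform localization \eqref{Als}).} A relatively compact set $K\subset B_1$ is uniformly tight in $L^2$ and in $\dot H^1$: covering $K$ by finitely many small $B_1$-balls reduces the question to the finitely many centers, and Lebesgue dominated convergence provides the decay outside balls of radius $R\to\infty$. The Sobolev embedding $B_1\hookrightarrow L^{2\sigma+2}$ upgrades this to tightness of the $L^{2\sigma+2}$-norm. Applied to $K:=\overline{\{u_c(t,\cdot,\cdot-z(t)):t\geq 0\}}\subset B_1$ and undoing the $z$-translation, this yields \eqref{Als}.

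\medskip

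The \emph{main obstacle} is the $|t_n^1|\to\infty$ exclusion at the end of Step 1: the constructed $\psi^\ast$ has critical action $S(\psi^\ast)=\tau_c$, so the minimality of $\tau_c$ cannot be invoked directly to bound its Strichartz norm. The resolution, as in \cite{HolmerRoudenko2008,DuyHolmerRoude2008}, is to rely on the half-line scattering produced by the fixed-point argument of Lemma \ref{Wape} and a careful shift argument, which is somewhat delicate in the present partial-confinement setting because cores live only in $z$ and the harmonic confinement in $y$ cannot be rescaled away.
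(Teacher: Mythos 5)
Your overall framework matches the paper's: reduce to subsequential compactness via the abstract criterion of \cite[Appendix~A]{DuyHolmerRoude2008}, apply the profile decomposition to $\phi_n=u_c(t_n)$, and run the dichotomy from Proposition~\ref{P11ce}. Case~(i) and the deduction of tightness from relative compactness are both fine. But the crucial step, and the one you yourself flag as the ``main obstacle,'' is the exclusion of $|t_n^1|\to\infty$ in Case~(ii), and the resolution you sketch does not work. You propose to use the nonlinear profile $\psi^\ast$ from Lemma~\ref{Wape} together with Lemma~\ref{lpt} and a ``judicious choice of the sequence $t_n$.'' But the compactness criterion you cited requires subsequential convergence for \emph{every} sequence $t_n\to\infty$ --- the sequence is given, not chosen --- so you are not free to tune it. And, as you correctly observe, $S(\psi^\ast)=\tau_c$, so the minimality of $\tau_c$ gives no bound whatsoever on $\|\text{NLS}(\cdot)\psi^\ast\|_{\ell_\gamma^p L^q L^r}$: you have pinpointed exactly why a route through $\psi^\ast$'s nonlinear dynamics cannot close.

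The paper's exclusion avoids $\psi^\ast$'s dynamics entirely and works with the \emph{linear} flow of $u_c(t_n)$. Write $u_c(t_n)=e^{it_n^1H}\psi^1(\cdot,\cdot-z_n)+W_n^1$ with $W_n^1\to0$ in $B_1$. If $t_n^1\to-\infty$, then for $t\ge 0$ one has $e^{-itH}u_c(t_n)=e^{-i(t-t_n^1)H}\psi^1(\cdot,\cdot-z_n)+e^{-itH}W_n^1$; the first term restricted to $\gamma\ge1$ is a far-forward Strichartz tail of the global linear flow of $\psi^1$, which goes to zero by Lemma~\ref{axc}, and the second is $o(1)$ since $W_n^1\to0$ in $B_1$. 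Hence $\|e^{-itH}u_c(t_n)\|_{\ell^p_{\gamma\ge1}L^qL^r}\to0$, and Lemma~\ref{ssp} (small data, applied forward from time $0$ with $\varphi=u_c(t_n)$) gives $\|\text{NLS}(\cdot)u_c(t_n)\|_{\ell^p_{\gamma\ge1}L^qL^r}\lesssim\delta$ for large $n$, contradicting forward criticality. If $t_n^1\to+\infty$, the same reasoning on negative $\gamma$ gives $\|u_c\|_{\ell^p_{\gamma\lesssim t_n}L^qL^r}\lesssim\delta$, and letting $t_n\to\infty$ contradicts $\|u_c\|_{\ell^p_\gamma L^qL^r}=\infty$. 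This is both simpler and logically tight, and it is the piece your proposal is missing.
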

\begin{proof}
By \cite[Appendix A]{DuyHolmerRoude2008} (see also proof of Proposition 6.1 in \cite{FangXieCaze2011}),  it is enough to show that the following condition is satisfied:
for every sequence $\left\{t_{k}\right\}^{\infty}_{k=1}$, $t_{k}\rightarrow\infty$, extracting a subsequence from $\left\{t_{k}\right\}^{\infty}_{k=1}$ if necessary, there exists $\left\{z_{k}\right\}^{\infty}_{k=1}\subset \mathbb{R}^{d-n}$ and  $\varphi\in B_{1}$ such that $u_{c}(t_{k}, \cdot,\cdot-z_{k})\rightarrow \varphi$ in $B_{1}$.\\
We set $\phi_{k}:=u_{c}(t_{k})$.  We note that $\phi_{k}$ satisfies:
\begin{equation}\label{zxcv}
S(\phi_{k})=\tau_{c}\quad \text{and} \quad \phi_{k}\in \mathcal{K}^{+}.
\end{equation}
Since $\|\phi_{k}\|^{2}_{B_{1}} \lesssim S(\phi_{k})$, it follows that
$\left\{\phi_{k}\right\}^{\infty}_{k=1}$ is bounded in $B_{1}$. Thus,
using the same argument developed in the proof of
Proposition~\ref{P11ce}, we obtain that
$\left\{\phi_{k}\right\}^{\infty}_{k=1}$ has only one 
nonlinear profile
\begin{equation*}
\phi_{k}=e^{it^{1}_{k}H}\psi^{\ast}(\cdot, \cdot-z_{k})+W^{1}_{k},
\end{equation*}
with $W^{1}_{k}\rightarrow0$ in $B_{1}$  (see proof of Case (ii) above). Assume that $|t^{1}_{k}|\rightarrow \infty$. Then we have two cases to consider. We first assume that $t^{1}_{k}\rightarrow -\infty$. By Lemma~\ref{axc} we see that
\begin{equation*}
\|e^{-itH}{u_{c}(t_{k})}\|_{\ell^{{p}}_{\gamma\geq 1}L^{q}L^{r}}\lesssim\|e^{-i(t-t^{1}_{k})H}{\psi^{\ast}}\|_{\ell_{\gamma\geq 1}^{{p}}L^{q}L^{r}}+\|{W^{1}_{k}}\|_{B_{1}}.
\end{equation*}
Since $W^{1}_{k}\rightarrow0$ in $B_{1}$ and 
\begin{equation*}
\lim_{k\rightarrow \infty}\|e^{-i(t-t^{1}_{k})H}\psi^{\ast}\|_{\ell_{\gamma\geq1}^{{p}}L^{q}L^{r}}=\lim_{k\rightarrow \infty}\|e^{-itH}\psi^{\ast}\|_{\ell_{\gamma\gtrsim-t^{1}_{k}}^{{p}}L^{q}L^{r}}=0,
\end{equation*}
it follows that $\|e^{-itH}{u_{c}(t_{k})}\|_{\ell_{\gamma\geq1}^{{p}}L^{q}L^{r}}\rightarrow 0$ as $k\rightarrow\infty$. In particular, for $k$ large, we have $\|e^{-itH}{u_{c}(t_{k})}\|_{\ell_{\gamma\geq1}^{{p}}L^{q}L^{r}}\leq \delta$, where $\delta$ is given in Lemma~\ref{ssp}. 
Then from Lemma~\ref{ssp} we obtain that
\begin{equation*}
\|\text{NLS}(t){u_{c}(t_{k})}\|_{\ell_{\gamma\geq1}^{{p}}L^{q}L^{r}} \lesssim  \delta,
\end{equation*}
 which is a absurd. Next, if $t^{1}_{k}\rightarrow \infty$, then a
 similar argument shows that 
\begin{equation*}
\|e^{-itH}{u_{c}(t_{k})}\|_{\ell_{\gamma\leq 1}^{{p}}L^{q}L^{r}}\leq \delta, \quad \text{for $k$ large.}
\end{equation*}
Again from Lemma \ref{ssp} we have $\|u_{c}\|_{\ell_{\gamma \lesssim
    t_{k} }^{{p}}L^{q}L^{r}}\lesssim \delta$. Since $t_{k}\rightarrow
\infty$ we infer that
$\|u_{c}\|_{\ell_{\gamma}^{{p}}L^{q}L^{r}}\lesssim \delta$, which is
also absurd. Therefore $t^{1}_{k}\rightarrow t^{\ast}$, $t^{\ast}\in
\mathbb{R}$. Thus
\[u_{c}(t_{k},\cdot,\cdot+z_k)\rightarrow e^{it^{\ast}H}\psi^{\ast}
  \text{ in }B_{1},\] and this completes the proof.

\end{proof}

\begin{proof}[Proof of Lemma~\ref{final}]
We proceed by a contradiction argument. Assume that $\varphi:=u_{c,0}\neq 0$. We observe that $G(\varphi)=0$ ($G$, we recall, is defined in \eqref{Moment}).  Indeed, suppose that $G(\varphi)\neq0$.  We define 
\begin{equation*}
\psi(x):=e^{iz\cdot z_{0}}\varphi(y,z), \quad\text{where}\quad 
z_{0}=- \frac{G(\varphi)}{\|\varphi\|^{2}_{L^{2}}}.
\end{equation*}
It is not difficult to show that $G(\psi)=0$,
$\|\nabla_{x}\psi\|^{2}_{L^{2}}< \|\nabla_{x}\varphi\|^{2}_{L^{2}}$
and  $\|\psi\|_{L^{2\sigma+2}}=\|\varphi\|_{L^{2\sigma+2}}$. Notice
that $\psi\in \mathcal K^{+}$. Indeed, since $\varphi\in \mathcal K^{+}$ we see that $S(\psi)< S(\varphi)=\tau_{c}<\beta$. Moreover, $I(\psi)\geq 0$. Assume by contradiction that $I(\psi)<0$. Then there exists $\lambda\in (0,1)$ such that $I(\lambda\psi)=0$. By using the fact $S(\varphi)\geq \frac{\sigma}{\sigma+1}\|\varphi\|^{2\sigma+2}_{L^{2\sigma+2}}$ we have
\begin{equation*}
S(\lambda\psi)=\frac{1}{2}I(\lambda\psi)+\frac{\sigma}{\sigma+1}\|\lambda\psi\|^{2\sigma+2}_{L^{2\sigma+2}}<\frac{\sigma}{\sigma+1}\|\psi\|^{2\sigma+2}_{L^{2\sigma+2}}=\frac{\sigma}{\sigma+1}\|\varphi\|^{2\sigma+2}_{L^{2\sigma+2}}<\beta,
\end{equation*}
which is absurd by Lemma~\ref{L1}. Therefore, $I(\psi)\geq 0$, $S(\psi)<\tau_{c}$ and $\psi\in \mathcal K^{+}$ (see Lemma~\ref{L3}). The corresponding solution $v\in C([0, \infty); B_{1})$ of \eqref{GP} with $v(0)=\psi$ is given by 
\begin{equation*}
v(t,y,z)=e^{i(z\cdot z_{0}-t|z_{0}|^{2})}u(t,y,z-2tz_{0}).
\end{equation*}
Since $\|u_{c}\|_{\ell_{\gamma}^{p}L^{q}L^{r}}=\infty$, it follows
that $\|v\|_{\ell_{\gamma}^{p}L^{q}L^{r}}=\infty$,  which is a
contradiction with the definition of $\tau_{c}$.\\
\textbf{Step 1.} We claim that
\begin{equation}\label{Fsa}
\lim_{t\rightarrow \infty}\frac{|z(t)|}{t}=0,
\end{equation}
where $z(t)$ is given in Lemma \ref{compass}.  The proof in \cite[Lemma 5.1]{DuyHolmerRoude2008} can be easily adapted to our case by considering the truncated center of mass of the form
\begin{equation*}
\Gamma_{R}(t)=\int_{\mathbb{R}^{d}}\phi_{R}(z)|u_{c}(t,x)|^{2}dx,
\end{equation*}
where $\phi_{R}(z)=R\phi(\tfrac{z}{R})$, $\phi(z)=(\theta(z_{1}),\theta(z_{2}), \ldots, \theta(z_{d-n}))$, $z\in \mathbb{R}^{d-n}$ such that $\theta\in C^{\infty}_{c}(\mathbb{R})$, $\theta(s)=1$ for $-1\leq s\leq 1$,
$\theta(s)=0$ for $|s|\geq 2^{1/3}$, $|\theta(s)|\leq |s|$,
$\|\theta\|_{L^{\infty}}\leq 2$ and
$\|\theta^{\prime}\|_{L^{\infty}}\leq 4$. 
Assume that \eqref{Fsa} is false. Then there exist a sequence $t_{k}\rightarrow \infty$ and $\alpha>0$ such that $|z(t_{k})|\geq \alpha t_{k}$.
Without loss of generality we may assume $z(0)=0$. For $R>0$ we set
\begin{equation*}
t_{0}(R)=\inf\left\{t\geq 0; |z(t)|\geq R\right\}.
\end{equation*}
We define $R_{k}=|z(t_{k})|$. Notice that $R_{k}\geq \alpha
t_{0}(R_{k})$ and $t_{0}(R_{k})\rightarrow\infty$ as
$k\rightarrow\infty$. On the other hand,
$\Gamma^{\prime}_{R}(t)=([\Gamma^{\prime}_{R}(t)]_{1},
[\Gamma^{\prime}_{R}(t)]_{2}\ldots, [\Gamma^{\prime}_{R}(t)]_{d-n}) $, with
\begin{equation*}
[\Gamma^{\prime}_{R}(t)]_{j}=2\text{Im}\int_{\mathbb{R}^{d}}\theta^{\prime}\(\tfrac{z_{j}}{R}\)\partial_{j}u_{c} \overline{u_{c}}dx, \quad j\in\left\{1, 2, \ldots, d-n\right\}.
\end{equation*}
Since $G(u_{c}(t))=0$ for all $t\in \R$, we infer that
\begin{equation*}
\text{Im}\int_{|z_{j}|\leq R}\partial_{j}u_{c}\overline{u_{c}}dx=-\text{Im}\int_{|z_{j}|> R}\partial_{j}u_{c}\overline{u_{c}}dx.
\end{equation*}
By using the fact that $\theta^{\prime}(\tfrac{z_{j}}{R})=1$ for $|z_{j}|\leq R$, we conclude
\begin{equation*}
[\Gamma^{\prime}_{R}(t)]_{j}=-2\text{Im}\int_{|z_{j}|\geq
  R}\partial_{j}u_{c} \overline{u_{c}}dx+2\text{Im}\int_{|z_{j}|\geq
  R}\theta^{\prime}\(\tfrac{z_{j}}{R}\)\partial_{j}u_{c}
\overline{u_{c}}dx. 
\end{equation*}
This implies
\begin{equation}\label{Cdd}
|\Gamma^{\prime}_{R}(t)|\leq 10\int_{|z|\geq R}|\nabla u_{c}||u_{c}|dx\leq 5\int_{|z|\geq R}\left[|\nabla u_{c}|^{2}+|u_{c}|^{2}\right]dx.
\end{equation}
Combining Lemma \ref{compass} and \eqref{Cdd}, given $\eps>0$ (to be chosen later) there exists $R_{\eps}>0$ such that if $\tilde{R}_{k}:=R_{k}+R_{\eps}$, then 
\begin{equation}\label{Dq1}
|\Gamma^{\prime}_{\tilde{R}_{k}}(t)|\leq 5\eps.
\end{equation}
Moreover, by following the same argument as in the proof of \cite[Lemma 5.1]{DuyHolmerRoude2008} we get
\begin{align}\label{Dq2}
|\Gamma_{\tilde{R}_{k}}(0)|&\leq R_{\eps}\|\varphi\|^{2}_{L^{2}}+2\tilde{R}_{k}\eps,\\  \label{Dq3}
|\Gamma_{\tilde{R}_{k}}({t}^{\ast}_{k})|&\geq\tilde{R}_{k}(\|\varphi\|^{2}_{L^{2}}-3\eps)-2R_{\eps}\|\varphi\|^{2}_{L^{2}},
\end{align}
where ${t}^{\ast}_{k}=t_{0}(R_{k})$. Since $\tilde{R}_{k}\geq R_{k}\geq \alpha \tilde{t}_{k}$, combining the inequalities \eqref{Dq1}, \eqref{Dq2} and \eqref{Dq3} we infer that 
\begin{align*}
5\eps {t}^{\ast}_{k}&\geq \int^{{t}^{\ast}_{k}}_{0}|\Gamma^{\prime}_{\tilde{R}_{k}}({t})|\geq |\Gamma_{\tilde{R}_{k}}({t}^{\ast}_{k})- \Gamma_{\tilde{R}_{k}}(0)|\\
&\geq  {t}^{\ast}_{k}\alpha (\|\varphi\|^{2}_{L^{2}}-3\eps)-2R_{\eps}\|\varphi\|^{2}_{L^{2}},
\end{align*}
that is,
\begin{equation*}
{t}^{\ast}_{k}\left[\alpha\|\varphi\|^{2}_{L^{2}}-\eps(3\alpha+5)\right]\leq 2R_{\eps}\|\varphi\|^{2}_{L^{2}}.
\end{equation*}
By taking $\eps>0$ sufficiently small, letting ${t}^{\ast}_{k}\rightarrow\infty$  in the inequality above yields a contradiction. This proves the claim.\\
\textbf{Step 2.} There exits $\eta>0$ such that $P(u_{c}(t))\geq \eta$ for all $t\geq 0$. Indeed, if not, there exists a sequence of times $t_{k}$ such that
\begin{equation*}
P(u_{c}(t_{k}))<\frac{1}{k} \quad \text{for all $k$.}
\end{equation*}\
Since $\left\{u_{c}(t,\cdot,\cdot-z(t)); t\geq 0\right\}$ is precompact, there exists $f\in B_{1}$ such  that, passing to a subsequence if necessary, $g_{k}:=u_{c}(t_{k},\cdot,\cdot-z(t_{k}))\rightarrow f$ in $B_{1}$. Notice that   $S(f)=\lim_{k\rightarrow \infty}S(g_{k})=\tau_{c}<\beta$ and since $P(u_{c}(t_{k}))\geq 0$, it follows that $P(f)= \lim_{k\rightarrow \infty}P(g_{k})=0$. Thus,
$S(f)<\beta$ and $P(f)=0$. By Remark~\ref{Remarp}, we infer that $f=0$, which is a absurd because $S(f)=\tau_{c}>0$. \\ 
\textbf{Step 3.} Conclusion. We use the virial identities \eqref{LV1} and \eqref{LV3} with $u_{c}$ in place of $u$. We recall that 
\begin{equation}\label{f1}
V^{\prime\prime}(t)=4(d-n)P(u_{c}(t))+R_{1}+R_{2}+R_{3},
\end{equation}
where $R_{1}$, $R_{2}$ and $R_{3}$ are given by \eqref{R11}. Notice that there exists a constant $K$ independent of $t$ such that
\begin{equation}\label{f2}
|R_{1}+R_{2}+R_{3}+R_{4}|\leq K\int_{|z|\geq R}\left[|\nabla u_{c}(t)|^{2}+|u_{c}(t)|^{2} +|u_{c}(t)|^{2\sigma+2}\right]dx.
\end{equation}
By \eqref{LV1} it is clear that there exists a constant $L>0$ such that
\begin{equation}\label{f0}
|V^{\prime}(t)|\leq LR.
\end{equation}
From Lemma \ref{compass}, there exists $\rho> 1$ such that
\begin{equation}\label{f3}
\int_{|z+z(t)|\geq \rho}\left[|\nabla u_{c}(t)|^{2}+|u_{c}(t)|^{2} +|u_{c}(t)|^{2\sigma+2}\right]dx\leq \frac{2\eta(d-n)}{K},
\end{equation}
for every $t\geq 0$, where $\eta$ is given in Step 2. Moreover, by \eqref{Fsa} we obtain that there exists $t_{0}>0$ such that
\begin{equation}\label{f4}
|z(t)|\leq \frac{2\eta(d-n)}{4L}t \quad \text{for every $t\geq t_{0}$.}
\end{equation}
For $t^{\ast}>t_{0}$ we put
\begin{equation}\label{f5}
R_{t^{\ast}}=\rho+\frac{2\eta(d-n)}{4L}t^{\ast}.
\end{equation}
It is clear that $\left\{|z|\geq R_{t^{\ast}}\right\}\subset \left\{|z+z(t)|\geq\rho\right\}$ for all $t\in [t_{0}, t^{\ast}]$. Therefore, by 
\eqref{f2} and \eqref{f3} we get
\begin{equation}\label{f6}
|R_{1}+R_{2}+R_{3}+R_{4}|\leq {2\eta(d-n)}, \quad \text{for all $t\in [t_{0}, t^{\ast}]$.}
\end{equation}
Thus, by \eqref{f6} and Step 2 we have 
\begin{equation}\label{f7}
V^{\prime\prime}(t)\geq 2\eta(d-n)\quad \text{for all $t\in [t_{0}, t^{\ast}]$.}
\end{equation}
Integrating \eqref{f7} on $(t_{0}, t^{\ast})$, it follows from \eqref{f7} and \eqref{f0}
\begin{align*}
2\eta(d-n)(t^{\ast}-t_{0})&\leq\int^{t^{\ast}}_{t_{0}}V^{\prime\prime}(t)dt\leq |V^{\prime}(t^{\ast})-V^{\prime}(t_{0})|\leq 2LR_{t^{\ast}}\\
&=2L\rho+{\eta(d-n)t^{\ast}}.
\end{align*}
Choosing $t^{\ast}$ large enough, we get a contradiction. The proof of lemma is now completed.
\end{proof}

\begin{proof}[Proof of Theorem~\ref{Th1}~(i) (scattering result)]
The proof of scattering part of Theorem~\ref{Th1}  is an immediate consequence of the Proposition~\ref{P11ce} and Lemma~\ref{final}.
\end{proof}

\bibliographystyle{siam}
\bibliography{biblio}

\end{document}